\newtheorem{theorem}{Theorem}[section]
\newtheorem{lemma}[theorem]{Lemma}
\newtheorem{definition}[theorem]{Definition}
\newtheorem{proposition}[theorem]{Proposition}
\newtheorem{conjecture}[theorem]{Conjecture}
\newtheorem*{remark}{Remark}
\newcommand{\R}{\mathbb R}
\newcommand{\bea}{\begin{eqnarray}}
\newcommand{\eea}{\end{eqnarray}}
\newcommand{\be}{\begin {equation}}
\newcommand{\ee}{\end{equation}}
\newcommand{\Z}{\mathbb Z}
\newcommand{\N}{\mathbb N}
\newcommand{\C}{\mathbb C}
\newcommand{\Q}{\Bbb Q}
\newcommand{\IN}{\mathbb N}
\setlist[enumerate]{leftmargin=*, listparindent=\parindent, parsep=0pt,
	font=\upshape, label=\alph*)} 
\setlist[itemize]{leftmargin=*} 
\renewcommand{\b}[1]{{\boldsymbol{#1}}}
\newcommand{\sgn}{{\rm sgn}}
\renewcommand{\pmod}[1]{\  \,  \left(  \mathrm{mod} \,  #1 \right)}
\numberwithin{equation}{section}
\newcommand{\vast}{\bBigg@{4}}
\newcommand{\Vast}{\bBigg@{5}}
\def\smallunderbrace#1{\mathop{\vtop{\m@th\ialign{##\crcr
				$\hfil\displaystyle{#1}\hfil$\crcr
				\noalign{\kern3\p@\nointerlineskip}%
				 \tiny\upbracefill\crcr\noalign{\kern3\p@}}}}\limits}
\begin{document}

\title{Higher depth quantum modular forms and plumbed $3$-manifolds}

\author{Kathrin Bringmann, Karl Mahlburg, Antun Milas}

\address{University of Cologne, Department of Mathematics and Computer Science, Weyertal 86-90, 50931 Cologne, Germany}
\email{kbringma@math.uni-koeln.de}

\address{Department of Mathematics, Louisiana State University, Baton Rouge, LA 70803, USA}
\email{mahlburg@math.lsu.edu}

\address{Max Planck Institute for Mathematics, Vivatsgasse 7, 53111 Bonn, Germany }
\address{Permanent address: Department of Mathematics and Statistics, SUNY-Albany, Albany, NY 12222, U.S.A.}
\email{amilas@albany.edu}

\thanks{The research of the first author is supported by the Alfried Krupp Prize for Young University Teachers of the Krupp foundation and the research leading to these results receives funding from the European Research Council under the European Union's Seventh Framework Programme (FP/2007-2013) / ERC Grant agreement n. 335220 - AQSER. The third author was supported by NSF-DMS grant 1601070 and a stipend from the Max Planck Institute for Mathematics, Bonn.}

\subjclass[2010] {11F27, 11F37, 14N35, 57M27, 57R56}

\keywords{quantum invariants; plumbing graphs; quantum modular forms}

\begin{abstract}
In this paper we study new invariants $\widehat{Z}_{\b a}(q)$ attached to plumbed $3$-manifolds that were introduced by Gukov, Pei, Putrov, and Vafa. These remarkable $q$-series at radial limits conjecturally compute WRT invariants of the corresponding plumbed $3$-manifold. Here we investigate the series $\widehat{Z}_{0}(q)$ for unimodular plumbing {\tt H}-graphs with six vertices.
We prove that for every positive definite unimodular plumbing matrix, $\widehat{Z}_{0}(q)$ is a depth two quantum modular form on $\mathbb{Q}$.
\end{abstract}

\maketitle

\section{Introduction and statement of results}

A {\it quantum modular form} is a complex-valued
function defined on $\mathbb{Q}$ or subset thereof, called the {\em quantum set}, that
exhibits modular-like transformation properties up to an obstruction term with ``nice'' analytic properties (for instance, it can be extended to a real-analytic function on some open subset of $\mathbb{R}$).
Quantum modular forms were introduced by Zagier in \cite{ZagierQuantum}, where he described several non-trivial examples. They have appeared in several areas including quantum invariants of knots and 3-manifolds \cite{Hikami0,Hikami2,Hikami,LZ}, mock modular forms \cite{Zw},  meromorphic Jacobi forms \cite{BRZ}, mathematical physics \cite {DMZ}, partial and false theta functions \cite{BM}, and representation theory \cite{BM,CMW}.

Motivated on the one hand by the concept of higher depth mock modular forms and on the other hand by the appearance of higher rank false theta functions in representation theory, Kaszian and two of the authors  \cite{BKM} defined  so-called higher depth quantum modular forms, and gave an infinite family of examples coming from characters of representations of vertex algebras.
If the depth is two, these functions satisfy
\begin{equation*}
f(\tau) - (c \tau+d)^{-k} f(\gamma \tau) \in \mathcal{Q}^1  \mathcal{O}(R),\qquad \gamma=\left(\begin{smallmatrix}
a & b \\ c & d
\end{smallmatrix}\right)\in{\rm SL}_2(\Z),
\end{equation*}
where $\mathcal{Q}^1$ is the space of quantum modular forms and $\mathcal{O}(R)$ is the space of real-analytic functions on $R$.
All known examples of depth two  quantum modular come from rank two partial theta functions ($q:=e^{2\pi i\tau}, \tau\in\mathbb{H}$)
\begin{equation*} \sum_{{{\bf n} \in  \mathbb{N}^2_{0}+{\b{\beta}} }}  q^{an_1^2+b n_2^2+c n_1 n_2},\end{equation*}
where $\b{\beta} \in \mathbb{Q}^2$ (throughout we write vectors in bold letters and their components with subscripts) and  $a, 4ab-c^2 >0$. Further examples of this kind were studied in \cite{BKM2,Males}. Depth two quantum modular forms also appear as the coefficients of meromorphic Jacobi forms of negative matrix index \cite{BKMZ}.

In \cite{GPPV}, as a part of the construction of homological invariants for closed 3-manifolds, Gukov, Pei, Putrov, and Vafa proposed a new approach to  WRT invariants for a large class of $3$-manifolds. For any  plumbed $3$-manifold, homeomorphically represented by a plumbing graph and positive definite linking matrix $M$  \footnote{In \cite{GPPV}, $M$ is negative definite, which we account for by replacing it with $-M$ when referring to their work.}, they \cite{Ne} defined a certain family of  $q$-series (called {\it homological blocks})
\begin{equation}
\label{Gukov:ZqDef}
\widehat{Z}_{\b a}(q):= \frac{q^{\frac{-3N+{\rm tr}(M)}{4}}}{(2\pi i)^N} \text{PV} \int_{|w_j|=1} \prod_{j=1}^N g(w_j) \prod_{(k,\ell) \in E} f(w_k,w_\ell) \Theta_{-M, \b a}(q;{\b w}) \frac{dw_j}{w_j},
\end{equation}
where PV denotes the Cauchy principle value, where throughout integrals are oriented counterclockwise and $\int_{|w_j|=1}$ indicates the integration $\int_{|w_1|=1}\ldots\int_{|w_N|=1}$. Moreover  $g(w_j)$ and  $f(w_k,w_\ell)$ are certain simple rational functions defined in \eqref{f} and \eqref{g}, respectively and
\begin{equation*}
\Theta_{-M, \b a}(q;{\b w}):=\sum_{\boldsymbol{\ell} \in 2 M \mathbb{Z}^N+\b a } q^{\frac14\boldsymbol{\ell} ^T M^{-1} \boldsymbol{\ell} } {\b w}^{{\b \ell}}, \ \ \b{a} \in 2{\rm coker}(M)+ \b \delta,
\end{equation*}
where $\b \delta:=(\delta_j)$ such that $\delta_j \equiv {\rm deg}(v_j) \pmod 2$ with $\delta_j$ denoting the {\it degree} (or valency) of $j$-th node.
Conjecturally, a suitable (explicit) linear combination of  $\widehat{Z}_{\b a}(q)$, denoted by $\widehat{Z}(q)$ in  \cite{GPPV}, is the universal WRT invariant, that is,  as $q \to e^{\frac{2 \pi i }{k}}$ its limit
coincides  with the SU$(2)$ WRT invariant of $M$ at level $k$. This, in particular,  leads to another conjecture (attributed in \cite{BMM} to Gukov) that $\widehat{Z}_{\b a}(q)$ and $\widehat{Z}(q)$ are quantum modular
forms. This conjecture can be verified for specific $3$-manifolds obtained from unimodular 3-star plumbing graphs
(e.g. the $E_8$ graph) \cite{BMM, CCFGH}  due to the fact that $\widehat{Z}_{\b a}(q)$ can be expressed via one-dimensional unary false theta functions
\begin{equation*}
\sum_{n \in \mathbb{Z}} {\rm sgn}(n) q^{an^2+bn},
\end{equation*}
whose quantum modularity properties are well-understood \cite{BM, Hikami2,Hikami,LZ,Zw}.

In this paper we investigate $\widehat{Z}_{\b a}(q)$ for a family of  non-Seifert plumbed $3$-manifolds. We  consider
the simplest  plumbing graph of this kind obtained by splicing two $3$-star graphs. This way we obtain the so-called ${\tt H}$-{graph} with six vertices (Figure \ref{H}), with the linking matrix
\begin{center}
\begin{figure}
			\begin{tikzpicture}
			\node[shape=circle,fill=black, scale = 0.4] (1) at (0,0) { };
			\node[shape=circle,fill=black, scale = 0.4] (2) at (-1,1) { };
			\node[shape=circle,fill=black, scale = 0.4] (3) at (-1,-1)  { };
			\node[shape=circle,fill=black, scale = 0.4] (4) at (1.5,0) { };
			\node[shape=circle,fill=black, scale = 0.4] (5) at (2.5,-1) { };
			\node[shape=circle,fill=black, scale = 0.4] (6) at (2.5,1) { };

						\node[draw=none] (B1) at (0,0.4) {$b_3$};
			\node[draw=none] (B2) at (-0.6,-1) {$b_2$};
			\node[draw=none] (B3) at (-0.6,1) {$b_1$};
			\node[draw=none] (B4) at (1.5, 0.4) {$b_4$};
			\node[draw=none] (B5) at (2.1,-1) {$b_6$};
			\node[draw=none] (B6) at (2.1,1) {$b_5$};

			\path [-] (1) edge node[left] {} (2);
			\path [-](1) edge node[left] {} (3);
			\path [-](1) edge node[left] {} (4);
			\path [-](4) edge node[left] {} (5);
			\path [-](4) edge node[left] {} (6);
		
			\end{tikzpicture}
			\caption{The {\tt H}-graph} \label{H}
			\end{figure}
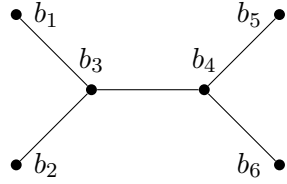
		\end{center}
\begin{equation} \label{b-matrix}
M=\left(
\begin{smallmatrix}
b_1 & 0 & -1 & 0 & 0 & 0 \\
0 & b_2 & -1 & 0 & 0 & 0 \\
-1 & -1 & b_3 & -1 & 0 & 0 \\
0 & 0 & -1 & b_4 & -1 & -1 \\
0 & 0 & 0 & -1 & b_5 & 0 \\
0 & 0 & 0 & -1 & 0 & b_6 \\
\end{smallmatrix}
\right).
\end{equation}
We only consider positive definite unimodular matrices whose  $3$-manifolds are integral homology spheres (i.e., $H_1(M_3,\mathbb{Z})=0$ as explained further in Section \ref{sec:Prelim:Def} below). Due to the invariance of $\widehat{Z}_{{\b \delta}}(q)$ under a Kirby move \cite{GPPV}, we may assume that $b_j \geq 2$, $j \in \{1,2,5,6\}$ (graphs with $b_j=1$, $j \in \{1,2,5,6\}$ reduce to $3$-star graphs whose  quantum modularity is well-understood \cite{BMM,CCFGH}).
With these assumptions $\widehat{Z}_{\b \delta}(q)$ (also denoted by $\widehat{Z}_{0}(q)$ in \cite{GPPV})  is the only homological block and therefore it conjecturally gives
WRT invariants at roots of unity. An important feature of this family of graphs is that $\widehat{Z}_{\b \delta}(q)$ can be expressed via rank two false theta functions ($\b\beta\in\Q^2$, $a,b,c\in\N$)
\begin{equation*}
\sum_{{\bf n} \in  \mathbb{Z}^2 }  {\rm sgn}^*(n_1){\rm sgn}^*(n_2)  q^{a(n_1+\beta_1)^2+b (n_2+\beta_2)^2+c (n_1+\beta_1)(n_2+\beta_2)},
\end{equation*}
where ${\rm sgn}^*(x):={\rm sgn}(x)$ for $x \in \R \setminus \{0\}$ and ${\rm sgn}^*(0):=1$. Our first result is on quantum modularity of certain partial theta functions needed to study $\widehat{Z}_{\b \delta}(q)$.

More generally, we prove quantum modularity of an infinite family of false theta functions which we now introduce. Define
\begin{equation*}
F_{\mathcal S,Q,\varepsilon}(\tau) :=\sum_{\b\alpha\in\mathcal S}\varepsilon(\b \alpha) \sum_{\b n \in\N_0^2} q^{KQ(\b n+\b \alpha)},
\end{equation*}
where $\mathcal S\subset \Q^2 \cap (0,1)^2$ is a finite set with the property that  $(1,1)-\b{\alpha}$, $(1-\alpha_1,\alpha_2)$, $(\alpha_1,1-\alpha_2)\in\mathcal S$ for $\b{\alpha}\in\mathcal S$, $\varepsilon:\mathcal S\to \C$ satisfies $\varepsilon(\b{\alpha})=\varepsilon((1,1)-\b{\alpha})=\varepsilon((1-\alpha_1,\alpha_2))$, and $K\in\N$ is minimal such that $K\mathcal S\subset \N^2$. For convenience,
we extend the domain of $\varepsilon$ to $\mathcal{S} + \mathbb{Z}^2$ by letting $\varepsilon(\b \alpha)= \varepsilon(\b \alpha+\b n)$, $\b n \in \mathbb{Z}^2$.
\begin{theorem}\label{QMT-intro}
	The function $F_{\mathcal S,Q,\varepsilon}$ is a quantum modular form of depth two, weight one, and quantum set $\mathcal Q_{\mathcal S,Q,\varepsilon}$, defined in
	(\ref{quantum-FS}).
\end{theorem}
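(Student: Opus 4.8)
The plan is to decompose $F_{\mathcal S,Q,\varepsilon}$ into pieces of pure depth and to treat each via the theory of (iterated) Eichler integrals and generalized error functions. Writing $f_{\b\alpha}(\tau):=\sum_{\b n\in\N_0^2}q^{KQ(\b n+\b\alpha)}$, so that $F_{\mathcal S,Q,\varepsilon}=\sum_{\b\alpha\in\mathcal S}\varepsilon(\b\alpha)f_{\b\alpha}$, I would first resolve the characteristic function of the octant $\N_0^2$ through sign functions. Since $\alpha_j\in(0,1)$, for $\b n\in\Z^2$ the quantity $\tfrac14(1+\sgn^*(n_1+\alpha_1))(1+\sgn^*(n_2+\alpha_2))$ equals $1$ on $\N_0^2$ and $0$ otherwise, whence
\begin{align*}
f_{\b\alpha}(\tau)=\frac14\sum_{\b n\in\Z^2}\Big(&1+\sgn^*(n_1+\alpha_1)+\sgn^*(n_2+\alpha_2)\\
&+\sgn^*(n_1+\alpha_1)\sgn^*(n_2+\alpha_2)\Big)q^{KQ(\b n+\b\alpha)}.
\end{align*}
This exhibits $f_{\b\alpha}$ as a depth zero piece (the full lattice theta function $\sum_{\b n\in\Z^2}q^{KQ(\b n+\b\alpha)}$, a genuine weight one modular form), two depth one partial theta functions carrying a single sign, and the genuinely depth two rank two false theta function carrying the product of two signs. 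The top order term is the last one, so the weight one and depth two claims will be governed by it.

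I would then realize the signed sums as iterated Eichler integrals. Up to elementary factors, the double sign false theta function is a double Eichler integral of a vector valued weight two theta function $\Theta$ attached to the positive definite form $Q$, its two components carrying the linear factors produced by differentiating in the two sign directions; likewise, after carrying out the free Gaussian summation, each single sign term reduces to an ordinary depth one Eichler integral of a weight $\tfrac32$ theta function. Following Alexandrov--Banerjee--Manschot--Pioline and Nazaro\u{g}lu, I would attach to $\Theta$ its modular completion built from the two dimensional (double) error function, producing a non-holomorphic function $\widehat F$ that transforms with weight one and whose holomorphic part (after summation over $\mathcal S$) is $F_{\mathcal S,Q,\varepsilon}$. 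Positive definiteness of $Q$ is what guarantees Gaussian decay and hence convergence of these completions, while minimality of $K$ ensures that the shifts $\b\alpha$ sit on the correct lattice $\tfrac1K\Z^2$.

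With the completion in hand, I would extract quantum modularity by comparison. For $\gamma=\left(\begin{smallmatrix}a&b\\c&d\end{smallmatrix}\right)\in{\rm SL}_2(\Z)$, subtracting $F_{\mathcal S,Q,\varepsilon}$ from $\widehat F$ and invoking the weight one modular transformation of $\widehat F$ expresses the obstruction $F_{\mathcal S,Q,\varepsilon}(\tau)-(c\tau+d)^{-1}F_{\mathcal S,Q,\varepsilon}(\gamma\tau)$ as an explicit iterated period integral of $\Theta$ along a geodesic, which extends to a real-analytic function on the relevant $R\subset\R$. Here the symmetry hypotheses on $\mathcal S$ and $\varepsilon$ enter decisively: after summation over the four element reflection orbits $\{\b\alpha,(1-\alpha_1,\alpha_2),(\alpha_1,1-\alpha_2),(1,1)-\b\alpha\}$, they force the depth one contributions to organize into a single depth one quantum modular form times a real-analytic factor, so that the entire obstruction lands in $\mathcal Q^1\mathcal O(R)$, as required for depth two. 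To identify the quantum set $\mathcal Q_{\mathcal S,Q,\varepsilon}$ and the radial limit values, I would apply the two dimensional Euler--Maclaurin summation formula to produce, at each admissible $x\in\Q$, an asymptotic expansion of $f_{\b\alpha}$ as $\tau\to x$; the rationals at which this expansion (equivalently, the radial limit) exists constitute $\mathcal Q_{\mathcal S,Q,\varepsilon}$.

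The main obstacle is the depth two bookkeeping of the completion: one must control the analytic continuation and the modular transformation of the double error function, and, more delicately, isolate the ``corner'' contributions of the octant where the two sign conditions interact. These corner terms are of lower depth and threaten to spoil modularity; the crux of the argument is to show that they are not genuine obstructions but rather assemble, precisely because of the prescribed symmetries of $\mathcal S$ and $\varepsilon$, into the allowed space $\mathcal Q^1\mathcal O(R)$. Establishing this, together with the uniform validity of the Euler--Maclaurin expansions across $\Q$, completes the proof.
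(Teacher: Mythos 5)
Your decomposition of the octant sum into depth zero, one, and two pieces is correct, and several of your ingredients (Euler--Maclaurin at rationals, double Eichler integrals, the symmetries of $\mathcal S$ and $\varepsilon$) do appear in the actual proof. But the central mechanism of your argument fails: there is no non-holomorphic completion $\widehat F$ of $F_{\mathcal S,Q,\varepsilon}$, as a function of $\tau$ alone on the upper half-plane, that transforms with weight one and has $F_{\mathcal S,Q,\varepsilon}$ as its holomorphic part. The error-function completions of Zwegers and Alexandrov--Banerjee--Manschot--Pioline produce modular objects only for \emph{indefinite} theta series, where Vign\'eras' differential-equation criterion is satisfied; here $Q$ is positive definite, which is precisely why $F_{\mathcal S,Q,\varepsilon}$ is a \emph{false} theta function rather than a mock one. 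If such an $\widehat F$ existed, then $F_{\mathcal S,Q,\varepsilon}$ would satisfy genuine weight-one transformations on all of $\mathbb{H}$ up to real-analytic period integrals, i.e.\ it would be (higher depth) mock modular, and no notion of quantum modularity would be needed. So the step ``subtracting $F$ from $\widehat F$ and invoking the weight one modular transformation of $\widehat F$'' has no valid object $\widehat F$ to act on, and the obstruction cannot be produced this way.

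What the paper does instead is construct a companion $\mathbb E_{\mathcal S,Q,\varepsilon}$, a sum over the \emph{full} shifted lattice of $M_2\left(\kappa;\,\cdot\,\sqrt{Kv}\right) q^{-KQ(\b n)}$ --- note the negative exponent --- which is \emph{not} of the form $F_{\mathcal S,Q,\varepsilon}$ plus a correction term. Via \eqref{Mrep} this companion is a finite linear combination of double Eichler integrals of weight-$\frac32$ Shimura theta functions (Lemma \ref{thetalemma}); its depth-two cocycle property, with errors lying in $\mathcal Q^1\mathcal O(R)$, is then exactly Theorem \ref{quantheorem}, and is not a consequence of the symmetry hypotheses on $\mathcal S$. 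The bridge between $F_{\mathcal S,Q,\varepsilon}$ and $\mathbb E_{\mathcal S,Q,\varepsilon}$ is purely asymptotic: Lemma \ref{Fthm} shows, by Euler--Maclaurin (where the quantum set condition kills the $1/t$ pole and the symmetries of $\mathcal S$ and $\varepsilon$ pair away half of the Bernoulli terms, using \eqref{splitM}), that the two functions have identical asymptotic expansions at every rational in $\mathcal Q_{\mathcal S,Q,\varepsilon}$, whence $F_{\mathcal S,Q,\varepsilon}$ inherits quantum modularity on that set. Your proposal relegates Euler--Maclaurin to ``identifying the quantum set,'' but this asymptotic matching of the holomorphic function with a genuinely different modular companion is the heart of the proof, and it is the step that your completion-based plan cannot supply.
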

\noindent Theorem \ref{QMT-intro} is of independent interest and can be used to investigate other examples of quantum modular forms.

Next we move on to studying unimodular matrices arising from  ${\tt H}$-graphs. Since the graph has six vertices it is not  surprising that there are only finitely many positive definite unimodular matrices. We prove the following result.
\begin{theorem} \label{39} There are, up to graph isomorphism, precisely $39$ equivalence classes of unimodular positive definite plumbing matrices  (\ref{b-matrix}) with $b_j \geq 2$, $j \in \{1,2,5,6 \}$.
\end{theorem}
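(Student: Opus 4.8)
The plan is to translate positive definiteness and unimodularity into an explicit Diophantine system in $b_1,\dots,b_6$, bound the solution set to a finite region, and then enumerate it modulo the symmetry group of the graph. First I would compute $\det(M)$ by taking the Schur complement with respect to the four degree-one vertices $v_1,v_2,v_5,v_6$, whose rows form the diagonal block $\mathrm{diag}(b_1,b_2,b_5,b_6)$ and which are mutually non-adjacent. This collapses $M$ to the $2\times 2$ matrix
\[
S=\begin{pmatrix} s_1 & -1\\ -1 & s_2\end{pmatrix},\qquad s_1:=b_3-\tfrac1{b_1}-\tfrac1{b_2},\quad s_2:=b_4-\tfrac1{b_5}-\tfrac1{b_6},
\]
and yields $\det(M)=b_1b_2b_5b_6(s_1s_2-1)$. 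Since the leaf block is positive definite, $M$ is positive definite iff $S$ is, i.e. $s_1,s_2>0$ and $s_1s_2>1$, while unimodularity becomes $s_1s_2=1+\frac{1}{b_1b_2b_5b_6}$. Equivalently, writing $A:=b_1b_2b_3-b_1-b_2$ and $B:=b_4b_5b_6-b_5-b_6$ (both forced to be positive integers), the system is $AB=1+b_1b_2b_5b_6$ with $A,B>0$.

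Next I would pin down the central labels. Using $b_1,b_2,b_5,b_6\ge 2$, so that $\frac1{b_i}+\frac1{b_j}\le 1$, one checks that $b_3,b_4\ge 2$ forces $B\ge b_5b_6$, hence $A\le b_1b_2$, hence $b_1=b_2=2$ and $b_3=2$, whence $A=4$ and $AB=1+4b_5b_6$ is a contradiction modulo $4$; meanwhile $b_3=b_4=1$ gives $s_1,s_2<1$, contradicting $s_1s_2>1$. Thus exactly one of $b_3,b_4$ equals $1$, and by the left–right reflection symmetry I may assume $b_4=1$ and $b_3\ge 2$. In this regime $s_2=1-\frac1{b_5}-\frac1{b_6}$ has minimal positive value $\tfrac16$ (at $\{b_5,b_6\}=\{2,3\}$), so $s_1=(1+\tfrac1{b_1b_2b_5b_6})/s_2\le \tfrac{17}{16}\cdot 6$, and therefore $b_3=s_1+\tfrac1{b_1}+\tfrac1{b_2}\le 7$.

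The crux is to bound the leaves. The key observation is that $b_1b_2b_5b_6=(s_1s_2-1)^{-1}$, together with the fact that if both $b_1b_2$ and $b_5b_6$ were large then $s_1s_2\to b_3b_4=b_3\ge 2$, violating $s_1s_2\le 1+\tfrac1{16}$; hence at most one of the two leaf-products can be large. Fixing the bounded part (finitely many choices, since $b_3\le 7$), the relation $AB=1+b_1b_2b_5b_6$ clears to a factorization equation of the form $(Db_5-A)(Db_6-A)=A^2+D$ with $D:=A-b_1b_2$ (and symmetrically for $b_1,b_2$), which has only finitely many solutions in integers $\ge 2$. This confines $(b_1,\dots,b_6)$ to an explicit finite set. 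I would then enumerate this set by computer, keep the tuples with $\det(M)=1$ and $S$ positive definite, and finally count the orbits under the order-eight automorphism group of the $\tt H$-graph, generated by $b_1\leftrightarrow b_2$, $b_5\leftrightarrow b_6$, and the reflection $(b_1,b_2,b_3)\leftrightarrow(b_5,b_6,b_4)$; since $b_3\ne b_4$ the reflection (and every coset of it) is fixed-point-free on solutions, so a Burnside count—or direct deduplication—produces exactly $39$ classes.

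The main obstacle will be making the finiteness bounds fully explicit and rigorous in the degenerate sub-cases, particularly $b_3=2$, where individual leaves can be quite large (for instance $(b_1,\dots,b_6)=(2,3,2,1,8,57)$ is a genuine positive definite unimodular solution) even though the overall solution set is finite; and then performing the orbit count carefully, correctly accounting for the configurations fixed by the leaf transpositions $b_1\leftrightarrow b_2$ and $b_5\leftrightarrow b_6$ so as to neither over- nor under-count the $39$ equivalence classes.
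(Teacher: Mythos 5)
Your skeleton coincides with the paper's: the Schur-complement formula $\det(M)=b_1b_2b_5b_6(s_1s_2-1)$ is exactly the paper's factored expression for $D$, the reduction to $b_4=1$, $2\le b_3\le 7$ is the same, and so is the endgame (a computer search, which the paper says yields $312$ tuples, followed by quotienting by the order-eight automorphism group $\Z_2\times\Z_2\times\Z_2$ to get $312/8=39$). Two of your ingredients are genuinely different and attractive: the mod-$4$ elimination of the case $b_3,b_4\ge 2$ (the paper instead uses the inequality $D\ge\tfrac{13}{36}b_1b_2b_5b_6>1$), and the divisor identity $(Db_5-A)(Db_6-A)=A^2+D$ with $D:=A-b_1b_2$, which I verified and which really does organize the solution set — for $(b_1,b_2,b_3)=(2,3,2)$ it reads $(b_5-7)(b_6-7)=50$, whose factorizations produce precisely appendix entries 37--39. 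The paper has no such identity; it bounds $b_5\le 13$ and then runs an explicit case analysis on $b_5$ with monotonicity arguments and Maple checks to bound $b_6,b_1,b_2$.

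However, there is a genuine gap at the one step that carries the whole theorem: the a priori boundedness needed before either your factorization or any computer search can be launched. Your justification — ``if both $b_1b_2$ and $b_5b_6$ were large then $s_1s_2\to b_3\ge 2$'' — is incorrect, because a large product does not force both factors to be large: with $(b_1,b_3,b_5)=(2,2,3)$ fixed and $b_2,b_6\to\infty$ one has $s_1s_2\to\tfrac32\cdot\tfrac23=1$, which is perfectly compatible with the unimodularity constraint $s_1s_2=1+\tfrac{1}{b_1b_2b_5b_6}$, so no contradiction arises even though both products blow up. Indeed the final list contains matrices in which both leaf-products are simultaneously large, e.g.\ $M(2,71,2,1,4,13)$ with $b_1b_2=142$, $b_5b_6=52$, and $M(3,47,2,1,3,17)$ with $141$ and $51$; so the dichotomy ``at most one leaf-product is large'' holds only for a threshold that you cannot know without already having the answer, which is circular. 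Without an explicit bound on one full pair, the identity $(Db_5-A)(Db_6-A)=A^2+D$ cannot be used, since its coefficients $A,D$ depend on the other, unbounded, pair. The gap is repairable, and in fact your own identity does it: both factors $Db_j-A$ are positive integers, so $\min(b_5,b_6)\le 2A/D+1$, and $A/D=1+\bigl(b_3-1-\tfrac1{b_1}-\tfrac1{b_2}\bigr)^{-1}\le 7$ once one notes (as the paper does at the outset) that $\gcd(b_1,b_2)\mid\det M$, hence $b_1\ne b_2$ and $\tfrac1{b_1}+\tfrac1{b_2}\le\tfrac56$; a symmetric step bounds $\min(b_1,b_2)$, and a final bilinear factorization in the two remaining variables finishes. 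But this chain is absent from your plan and is comparable in labor to the paper's case analysis. The same coprimality observation (together with $\gcd(b_5,b_6)\mid\det M$) also settles the orbit-counting worry you defer: unimodularity forces $b_1\ne b_2$ and $b_5\ne b_6$, so the $\Z_2^3$-action on solutions is free and the count is exactly $312/8=39$, with no Burnside corrections needed.
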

Then our main result is the following.
\begin{theorem}
\label{T:mainquantum}
For any positive definite unimodular plumbing matrix as in Theorem \ref{39} , $q^{c_M} \widehat{Z}_{0}(q)$, for some $c_M \in \mathbb{Q}$, is a quantum modular form of depth two, weight one, and quantum set $\mathbb{Q}$.
\end{theorem}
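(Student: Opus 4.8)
The plan is to reduce $\widehat{Z}_{0}(q)$ to a function of the form $F_{\mathcal S,Q,\varepsilon}$ treated in Theorem \ref{QMT-intro} and then invoke that theorem, together with a separate analysis showing that unimodularity forces the quantum set to be all of $\Q$. First I would evaluate the contour integral \eqref{Gukov:ZqDef} explicitly. The {\tt H}-graph of Figure \ref{H} has four leaves (vertices $1,2,5,6$, of degree one) attached to the two central vertices $3$ and $4$ (each of degree three), so in \eqref{Gukov:ZqDef} the factors $g(w_j)$ at the leaves and the edge factors $f(w_k,w_\ell)$ decouple along the leaf edges: integrating out the four leaf variables $w_1,w_2,w_5,w_6$ by residues/principal values leaves a Laurent expansion whose surviving Fourier modes are indexed by the two central coordinates. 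Pairing this against $\Theta_{-M,\b\delta}$ — whose quadratic exponent is $\tfrac14\boldsymbol{\ell}^T M^{-1}\boldsymbol{\ell}$ on the lattice $2M\Z^N+\b\delta$ — then yields, up to a rational power of $q$, a rank two false theta function $\sum_{\b n\in\Z^2}\sgn^*(n_1)\sgn^*(n_2)\,q^{Q(\b n+\b\beta)}$, with the binary form $Q$ determined by $M^{-1}$ and the shift $\b\beta\in\Q^2$ read off from the coset $\b\delta$. Positive definiteness of $M$ guarantees $a,\,4ab-c^2>0$ for this form, so the resulting series is of the admissible type.

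Next I would recast this rank two false theta function in the shape $F_{\mathcal S,Q,\varepsilon}$. Splitting the sum over $\Z^2$ into the four sign-quadrants of $(n_1,n_2)$ and reindexing produces precisely four shifted copies $\b\alpha,\ (1,1)-\b\alpha,\ (1-\alpha_1,\alpha_2),\ (\alpha_1,1-\alpha_2)$, while the sign weight $\sgn^*(n_1)\sgn^*(n_2)$ becomes a function $\varepsilon$ that is manifestly invariant under $\b\alpha\mapsto(1,1)-\b\alpha$ and $\b\alpha\mapsto(1-\alpha_1,\alpha_2)$ — exactly the symmetry hypotheses imposed on $\mathcal S$ and $\varepsilon$ in Theorem \ref{QMT-intro}. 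One then takes $K\in\N$ minimal with $K\mathcal S\subset\N^2$ and chooses $c_M\in\Q$ to absorb the global prefactor $q^{(-3N+{\rm tr}(M))/4}$ together with the constant produced by completing the square, so that $q^{c_M}\widehat{Z}_{0}(q)$ is literally (a finite $\C$-linear combination reducible to) a single $F_{\mathcal S,Q,\varepsilon}$. Theorem \ref{QMT-intro} then gives at once that $q^{c_M}\widehat{Z}_{0}(q)$ is a quantum modular form of depth two and weight one on the associated quantum set $\mathcal Q_{\mathcal S,Q,\varepsilon}$.

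It remains to upgrade the quantum set to all of $\Q$, and this is where I expect the main difficulty to lie. For a general $F_{\mathcal S,Q,\varepsilon}$ the set $\mathcal Q_{\mathcal S,Q,\varepsilon}$ of \eqref{quantum-FS} need not be all of $\Q$: the depth two obstruction in the modular transformation, assembled from iterated (Eichler-type) integrals of the theta kernel attached to $Q$, can fail to be real-analytic at rationals where certain finite Gauss sums built from $Q$ and $\mathcal S$ degenerate. The key point is that unimodularity of $M$ pins down the level of $Q$ and the denominators of $\b\beta$ so tightly that these exponential sums never vanish, whence no rational is excluded. I would establish this either by a uniform argument exploiting $\det(M)=1$, or, if a clean uniform statement resists, by a direct check of the finitely many (at most $39$, by Theorem \ref{39}) equivalence classes, for each of which $Q$, $\mathcal S$, and $\varepsilon$ are completely explicit. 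Combining this with the two preceding steps completes the proof.
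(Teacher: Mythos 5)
Your first two steps are essentially the paper's own route: the contour integral is evaluated via the residue/principal value computations (Proposition \ref{prop:Zq}, together with Proposition \ref{P:uni} and Lemma \ref{L:M=A+c}) into a rank two false theta function whose binary form comes from the central $2\times 2$ block of $M^{-1}$, and the quadrant splitting produces shifted copies with the symmetries demanded of $\mathcal S$ and $\varepsilon$. One small inaccuracy there: the quadrants $\N_0\times(-\N)$ and $(-\N)\times\N_0$ produce the reflected form $Q^*(\b{n}):=Q(-n_1,n_2)$, so the outcome is a difference of two such functions ($\mathcal Z_1-\mathcal Z_2$ in the paper's notation), not a single $F_{\mathcal S,Q,\varepsilon}$; this is harmless since each piece is treated separately.

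The genuine gap is in your last step, and it is conceptual: your criterion for membership in the quantum set is inverted. By \eqref{quantum-FS}, $\tfrac{h}{k}$ lies in $\mathcal Q_{\mathcal S,Q,\varepsilon}$ precisely when the weighted Gauss sum $\sum_{\b{\alpha}\in\mathcal S}\varepsilon(\b{\alpha})\sum_{\b{\ell}\pmod{k}}e^{2\pi i\frac hk KQ(\b{\ell}+\b{\alpha})}$ \emph{equals zero}; indeed, by Lemma \ref{Fthm} this sum is (up to a nonzero constant) the coefficient of the $t^{-1}$ singularity of $F_{\mathcal S,Q,\varepsilon}$ at $\tfrac{h}{k}$, so the rationals at which it is \emph{nonzero} are exactly the excluded ones. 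Your plan — to show that unimodularity forces these exponential sums to ``never vanish'' — would, if carried out, prove the opposite of what is needed. What must be shown, and what the paper establishes in Theorem \ref{main-thm}, is that the combined sum \eqref{E:ellsum} vanishes for \emph{every} $\tfrac{h}{k}$: either the individual Gauss sums vanish by Proposition \ref{P:G=0} (a gcd condition on the linear terms), or else the inner sums are proved to be independent of the choice of $\b{\alpha}$, whereupon the total vanishes because $\varepsilon\equiv+1$ on $\mathcal S_1$ and $\varepsilon\equiv-1$ on $\mathcal S_2$ with $|\mathcal S_1|=|\mathcal S_2|$. This structured cancellation between $\mathcal S_1$ and $\mathcal S_2$, resting on the arithmetic hypotheses of Section \ref{sec:family} (the factorizations of $\sigma_1,\sigma_3$, the conditions on $L,R_j,\mu_j$, and $r_j^2\equiv s_j^2\pmod{2N_j}$), is the missing idea; unimodularity enters only by guaranteeing that each of the $39$ matrices yields data satisfying those hypotheses. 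Note also that your fallback of ``directly checking'' the $39$ classes cannot substitute for this: for each class the vanishing must hold at infinitely many rationals, so a finite computation per class can only verify the hypotheses of Theorem \ref{main-thm} (which is what the appendix does), not its conclusion.
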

Based on our results here and in \cite{BMM}, we can  slightly reformulate Gukov's conjecture mentioned in \cite{BMM} on the quantum modularity of $\widehat{Z}_{\b a}(q)$ and $\widehat{Z}(q)$.
\begin{conjecture}\label{conj:plumb}
Let $T$ be a plumbing graph (tree)  with $r$ nodes of degree at least three. Then $\widehat{Z}_{\bold{a}}(q)$ is a depth $ r$ quantum modular form whose quantum set is a subset of $\mathbb Q$. Moreover, for any unimodular plumbing matrix, $\widehat{Z}(q)$ is quantum of depth $r$
with quantum set $\mathbb{Q}$.
\end{conjecture}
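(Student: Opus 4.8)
The plan is to extend to an arbitrary plumbing tree $T$ the two-stage strategy underlying Theorems~\ref{QMT-intro} and~\ref{T:mainquantum}. The first stage is combinatorial and reduces the quantum invariant to a rank $r$ false theta function. Starting from the contour integral \eqref{Gukov:ZqDef}, I would expand the vertex factor $g(w_v)=(w_v-w_v^{-1})^{2-\deg(v)}$ at each node $v$ of degree at least three; since $2-\deg(v)\le -1$, this expansion introduces exactly one signed summation variable per high-valence node (its degree controlling an accompanying polynomial factor, hence the weight, but not the depth). At the leaves the factor is $(w_v-w_v^{-1})$ and at the degree-two chain vertices it is $1$, so performing the principal-value integrations over their variables amounts to extracting residues and imposing linear relations that eliminate every lattice direction except those attached to the $r$ high-valence nodes. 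Iterating this elimination inward from the leaves, I expect to obtain, for a suitable $c_M\in\Q$, a representation
\begin{equation*}
q^{c_M}\widehat Z_{\b a}(q)=\sum_{\b\alpha}\varepsilon(\b\alpha)\sum_{\b n\in\Z^r}\Big(\prod_{j=1}^r{\rm sgn}^*(n_j)\Big)q^{Q(\b n+\b\alpha)},
\end{equation*}
a rank $r$ analogue of $F_{\mathcal S,Q,\varepsilon}$, where $Q$ is the positive definite quadratic form obtained by restricting $\tfrac14 M^{-1}$ to the sublattice indexed by the high-valence nodes, and the finite set of shifts $\b\alpha$ together with the weights $\varepsilon(\b\alpha)$ are determined by $\b a$ and the combinatorics of $T$. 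This recovers the rank one ($3$-star) and rank two (${\tt H}$-graph) expansions already in use.

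The second stage, the analytic heart of the conjecture, is to prove that every such rank $r$ false theta function is a depth $r$ quantum modular form (of weight $r/2$, agreeing with Theorem~\ref{QMT-intro} when $r=2$). Here I would write the false theta function as an $r$-fold iterated Eichler integral of vector-valued unary theta functions of weight $\tfrac32$ and construct its non-holomorphic modular completion using the higher depth generalized error functions $E_r$. The completion transforms as a genuine non-holomorphic modular form, so the entire modular defect of the false theta function is carried by the difference between it and its completion, which is a combination of iterated integrals of strictly lower depth. Inserting this into the transformation law and arguing by induction on $r$, I expect the cocycle $f(\tau)-(c\tau+d)^{-r/2}f(\gamma\tau)$ to lie in $\mathcal Q^{r-1}\mathcal O(R)$, a depth $r-1$ quantum modular form times a real-analytic function; the base case $r=1$ is the classical quantum modularity of unary false theta functions. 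The quantum set is then the set of rationals at which the radial limits of these iterated Eichler integrals exist.

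For the assertion about $\widehat Z(q)$, I would use that a unimodular $M$ represents an integral homology sphere, so ${\rm coker}(M)=0$ and $\b a=\b\delta$ is the unique admissible label; hence $\widehat Z(q)$ equals $\widehat Z_{\b\delta}(q)$ up to the normalizing power of $q$, and the first part applies with $r$ equal to the number of high-valence nodes. In this case the shifts $\b\alpha$ have uniformly bounded denominators, and the asymptotic expansion of the iterated Eichler integrals shows that the radial limits exist at every rational, so the quantum set is all of $\Q$ rather than a proper subset (the latter occurring in general because the missing rationals are governed by the discriminant of $M$).

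I expect the second stage to be the main obstacle. Whereas the depth two theory of generalized error functions is well developed, a depth $r$ argument demands uniform control of the asymptotic expansion of $E_r$ near the real line and, crucially, a proof that the successive obstruction terms genuinely decompose as (depth $r-1$ quantum) times (real-analytic) rather than landing in some larger space. Verifying that the real-analytic factors and the lower depth quantum pieces separate cleanly for an arbitrary positive definite $Q$ and arbitrary shifts $\b\alpha$ — and that this persists under the induction — is where the real difficulty lies; the first-stage reduction, though increasingly intricate for large trees, should follow routinely by induction on the number of vertices.
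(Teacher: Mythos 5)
You are attempting to prove a statement that the paper itself leaves open: Conjecture \ref{conj:plumb} is exactly that, a conjecture (a reformulation of Gukov's), and the paper establishes it only in depth $\leq 2$ special cases -- $3$-star graphs via earlier work and the six-vertex {\tt H}-graphs via Theorem \ref{T:mainquantum}. So there is no proof in the paper to compare yours against, and what you have written is a research program rather than a proof; to your credit, you flag this yourself in the last paragraph. Still, let me point out where the program has genuine gaps beyond the one you acknowledge. First, in the combinatorial stage, your claim that a node of degree $\deg(v)\geq 4$ contributes ``one signed summation variable, with a polynomial factor affecting the weight but not the depth'' is unproven and nontrivial: expanding $(w_v-w_v^{-1})^{2-\deg(v)}$ with exponent $\leq -2$ produces binomial coefficients polynomial in the summation index, so the resulting series is not of the pure form $\sum\varepsilon(\b{\alpha})\sum_{\b n}\prod_j \sgn^*(n_j)\,q^{Q(\b n+\b\alpha)}$ covered by any analog of Theorem \ref{QMT-intro}; one would need a theory of higher-rank false theta functions with polynomial coefficients, whose completions and Eichler-integral representations differ from the $\sgn$-type case. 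Your normalization $Q=\tfrac14 M^{-1}$ restricted to the high-valence block is also only verified here for the {\tt H}-graph (Lemma \ref{L:M=A+c}); for general trees, and for non-unimodular $M$ where $\widehat{Z}_{\b a}$ ranges over $2\,{\rm coker}(M)+\b\delta$, the elimination of chain and leaf variables is not a routine induction.

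Second, and more seriously, the analytic stage breaks at a specific point. The paper's depth-two argument stands on Lemma \ref{thetalemma}: the completion $\mathbb{E}_{\mathcal S,Q,\varepsilon}$ is rewritten as a \emph{double} Eichler integral of \emph{unary} weight-$\tfrac32$ theta functions, which is possible because a binary form splits under the single substitution $\nu_1=\sigma_1 n_1+\sigma_2 n_2$, $\nu_2=n_2$; only then does Theorem \ref{quantheorem} deliver the cocycle decomposition into $\mathcal Q^1\mathcal O(\R)$. For $r\geq 3$ a positive definite form admits no such rational splitting into $r$ unary pieces in general, the boundary thetas in the $E_r$-completion of Alexandrov--Banerjee--Manschot--Pioline are genuinely higher-rank, and no analog of Theorem \ref{quantheorem} for the resulting iterated integrals exists; ``arguing by induction on $r$'' names the desired conclusion, not a mechanism, exactly as you suspect. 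Finally, your claim that unimodularity alone forces quantum set $\Q$ is not how the paper gets it even at depth two: membership of $\frac{h}{k}$ in the quantum set \eqref{quantum-FS} is the vanishing of a Gauss sum, and the paper proves the cancellation \eqref{E:ellsum} only under the arithmetic hypotheses of Section \ref{sec:family} (coprimality and parity conditions on $R_1,R_2,\mu_1,\mu_3$ and $r_j^2\equiv s_j^2\pmod{2N_j}$), checked case by case for the $39$ matrices in the appendix via Lemma \ref{Fthm}. Radial limits at every rational are equivalent to this leading-term cancellation and do not follow from ${\rm coker}(M)=0$; any proof of the second sentence of the conjecture must supply a structural reason for it, which is currently missing both from your sketch and from the literature.
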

Combined with the conjecture on $\widehat{Z}(q)$ mentioned above, Conjecture \ref{conj:plumb} would imply that (unified) WRT invariants of plumbed $3$-manifolds are higher depth quantum modular forms. We expect that the higher depth property also holds true for higher rank ${\rm SU}(N)$ invariants (see \cite{C}).

The paper is organized as follows. In Section 2, we discuss special functions, the Euler-Maclaurin summation formula, higher depth quantum modular forms, and double Eichler integrals.  In Section 3
we show quantum modularity of $F_{\mathcal S,Q_1,\varepsilon}$ (see Theorem \ref{QMT}).
In Section 4, we prove our main result on quantum modularity of ${Z}(q)$, defined in (\ref{E:ZqDef}), for unimodular plumbing graphs (see Theorem \ref{main-thm}).
The proof of the classification of positive definite unimodular matrices (\ref{b-matrix}) is given in Section 5. Finally, in the appendix we list data for all $39$ equivalence classes of positive unimodular matrices needed to
compute ${Z}(q)$.

\smallskip

{\bf Acknowledgements:} The  authors  thank S. Chun, S. Gukov,  and C. Manolescu for helpful discussion on some aspects of \cite{GPPV} .

\section{Preliminaries}
\label{sec:Prelim}

\subsection{Special functions}

Following \cite{ABMP} (with slightly different notation),
for each $\kappa\in \R$ we define a function $E_2:\R\times\R^2\rightarrow\R$ by
\begin{equation*}
E_2(\kappa;\boldsymbol{x}):=\int_{\R^2}\sgn\left(w_1\right)\sgn\left(w_2+\kappa w_1\right)e^{-\pi \left(\left(w_1-x_1\right)^2+\left(w_2-x_2\right)^2\right)}dw_1dw_2.
\end{equation*}
For $x_2,x_1-\kappa x_2\neq0$, we set
\begin{align*}
M_2(\kappa; \b x):=-\tfrac{1}{\pi^2} \int_{ \R^2-i \b x} \frac{e^{-\pi w_1^2-\pi w_2^2-2\pi i(x_1w_1+x_2w_2)}} {w_2(w_1-\kappa w_2)} dw_1dw_2.
\end{align*}
The following formula relates $M_2$ and $E_2$
\begin{multline}\label{splitM}
	M_2(\kappa;x_1+\kappa x_2,x_2) = E_2(\kappa; x_1+\kappa x_2,x_2) + \sgn(x_1)\sgn(x_2)  \\
	- \sgn(x_2) E(x_1+\kappa x_2)-\sgn(x_1) E\left(\tfrac{\kappa }{\sqrt{1+\kappa^2}}x_1 + \sqrt{1+\kappa^2}x_2\right),
\end{multline}
where for $x\in\R$, we set $E(x):=2\int_{0}^{x}e^{-\pi w^2}dw$.

The proof of the next result follows from the proof of \cite[Lemma 6.1]{BKM}. Here $\tau=u+ i v$.
\begin{proposition}For $\kappa, x_1, x_2 \in \R$ we have
\begin{align}\label{Mrep}
&M_2(\kappa;x_1,x_2)=
-\frac{x_1}{2\sqrt{v}}
\frac{ x_2}{\sqrt{v}}
q^{\frac{x_1^2}{4v}+\frac{x_2^2}{4v}}
\int_{-\overline{\tau}}^{i \infty}
\frac{e^{\frac{\pi i  x_1^2 w_1}{2v}} }{\sqrt{-i(w_1+\tau)}}
\int_{w_1}^{i\infty}
\frac{e^{\frac{\pi i x_2^2 w_2}{2v}}}{\sqrt{-i(w_2+\tau)}}dw_2
dw_1
\\ &\quad -
\frac{x_2+\kappa x_1}{2\sqrt{(1+\kappa^2)v}}
\frac{x_1-\kappa x_2}{\sqrt{(1+\kappa^2)v}}
q^{\frac{(x_2+\kappa x_1)^2}{4\left(1+\kappa^2\right)v}
	+\frac{(x_1 - \kappa x_2)^2}{4\left(1+\kappa^2\right)v}}
\int_{-\overline{\tau}}^{i \infty}
\frac{e^{\frac{\pi i(x_2+\kappa x_1)^2w_1}{2\left(1+\kappa^2\right)v}} }{\sqrt{-i(w_1+\tau)}}
\int_{w_1}^{i\infty}
\frac{e^{\frac{\pi i(x_1-\kappa x_2)^2w_2}{2\left(1+\kappa^2\right)v} }}{\sqrt{-i(w_2+\tau)}}
dw_2
dw_1.
\notag
\end{align}
\end{proposition}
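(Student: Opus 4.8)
The plan is to follow the strategy in the proof of \cite[Lemma 6.1]{BKM}, reducing the two-dimensional Mordell-type integral defining $M_2$ to a pair of \emph{nested} one-dimensional Eichler integrals. The basic building block is a one-dimensional identity that rewrites a single simple pole, integrated against a Gaussian along a horizontally shifted line, as a single Eichler integral: for $\alpha\in\R$ one has an identity of the shape
\begin{equation*}
-\frac{1}{\pi}\int_{\R-i\alpha}\frac{e^{-\pi w^2-2\pi i\alpha w}}{w}\,dw=\frac{\alpha}{\sqrt{v}}\,q^{\frac{\alpha^2}{4v}}\int_{-\overline{\tau}}^{i\infty}\frac{e^{\frac{\pi i\alpha^2 w}{2v}}}{\sqrt{-i(w+\tau)}}\,dw,
\end{equation*}
where the branch of $\sqrt{-i(w+\tau)}$ is fixed by decay as $w\to i\infty$ and the prefactor is pinned down by the computation. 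First I would establish (or import from \cite{BKM}) this building block by computing the Gaussian integral against the shifted simple pole and recognizing the outcome as an incomplete Gaussian (error) integral, which after a change of variables becomes the Eichler integral on the right.

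The difficulty in two dimensions is that the denominator $w_2(w_1-\kappa w_2)$ couples the two variables, so one cannot simply iterate the building block in the coordinates $(w_1,w_2)$. The resolution, exactly as in the theory of rank-two generalized error functions underlying \cite{BKM}, is to organize the computation around the two Gram--Schmidt orthogonalizations of the pair of linear forms $w_1$ and $w_2+\kappa w_1$ that carry the sign data of $E_2$ (equivalently, the two orders in which the coupled poles $w_2=0$ and $w_1=\kappa w_2$ are resolved). Each ordering produces an orthonormal frame in which the components of $\b x=(x_1,x_2)$ are $(x_1,x_2)$ for the first frame and $\left(\frac{x_2+\kappa x_1}{\sqrt{1+\kappa^2}},\frac{x_1-\kappa x_2}{\sqrt{1+\kappa^2}}\right)$ for the second. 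These are precisely the arguments appearing in the two summands of \eqref{Mrep}, and the common Gaussian prefactor is frame-independent because $(x_2+\kappa x_1)^2+(x_1-\kappa x_2)^2=(1+\kappa^2)(x_1^2+x_2^2)$.

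With this in place I would apply the one-dimensional building block twice in each frame. Integrating out the first (outer) variable yields the integral $\int_{-\overline{\tau}}^{i\infty}$ with kernel $(-i(w_1+\tau))^{-\frac12}$, while integrating out the second (inner) variable yields the nested integral $\int_{w_1}^{i\infty}$ with kernel $(-i(w_2+\tau))^{-\frac12}$; the lower limit $w_1$ encodes the Gram--Schmidt nesting, i.e.\ the ordering of the two sign conditions. Summing the two frame contributions gives the right-hand side of \eqref{Mrep}. As an independent check, specializing to $\b x=(x_1+\kappa x_2,x_2)$ and comparing with the decomposition \eqref{splitM} should recover the known real-analytic behavior of $E_2$ together with the elementary error-function terms.

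The step I expect to be the main obstacle is the rigorous contour analysis: justifying that the coupled two-dimensional principal-value integral decouples into the stated nested one-dimensional Eichler integrals, controlling convergence at the cusp $w_j\to i\infty$, and, most delicately, tracking the branches of $\sqrt{-i(w_j+\tau)}$ together with the exact normalizing constants (the factor $\tfrac12$ and the $\sqrt{v}$ and $\sqrt{1+\kappa^2}$ weights). This bookkeeping is precisely what is carried out in the proof of \cite[Lemma 6.1]{BKM}, which is why the result follows from that computation.
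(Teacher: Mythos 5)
Your proposal is correct and takes essentially the same route as the paper: the paper's entire proof of this proposition is the single remark that it ``follows from the proof of \cite[Lemma 6.1]{BKM}'', and your argument---one-dimensional Eichler-integral building blocks applied in the two Gram--Schmidt frames (whose rotated components $\bigl(\frac{x_2+\kappa x_1}{\sqrt{1+\kappa^2}},\frac{x_1-\kappa x_2}{\sqrt{1+\kappa^2}}\bigr)$ indeed produce the two summands of \eqref{Mrep}), with the contour/branch/constant bookkeeping deferred to that cited proof---is precisely a sketch of that same argument.
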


\subsection{Euler-Maclaurin summation formula}

Let $B_m(x) $ be the $m$-th Bernoulli polynomial  defined by $\frac{w e^{xw}}{e^w - 1} =: \sum_{m \geq 0} B_m(x) \frac{w^m}{m!}$. We require
\begin{equation} \label{Ber}
B_m(1-x) = (-1)^m B_m(x).
\end{equation}

The Euler-Maclaurin summation formula implies the following lemma.
\begin{lemma}\label{EulerMcLaurin}
For $\boldsymbol{\alpha}\in\R^2$, $F:\R^2\rightarrow\R$ a $C^\infty$-function which has rapid decay, we have
\begin{align*}
\notag
&\sum_{\boldsymbol{n}\in\N_0^2}  F((\boldsymbol{n}+\boldsymbol{\alpha})t) \\
\notag
&\sim \frac{\mathcal I_F}{t^2} - \sum_{n_2 \geq 0} \frac{B_{n_2+1}(\alpha_2)}{(n_2+1)!} \int_{0}^{\infty}F^{(0,n_2)}(x_1,0)dx_1t^{n_2-1}- \sum_{n_1 \geq 0} \frac{B_{n_1+1}(\alpha_1)}{(n_1+1)!}\int_{0}^{\infty}F^{(n_1,0)}(0,x_2)dx_2t^{n_1-1}  \\
&\quad+ \sum_{n_1,n_2\geq 0}\frac{B_{n_1+1}(\alpha_1)}{(n_1+1)!}\frac{B_{n_2+1}(\alpha_2)}{(n_2+1)!} F^{(n_1,n_2)}(0,0)t^{n_1+n_2},
\end{align*}
where $\mathcal I_F:=\int_{0}^{\infty}\int_{0}^{\infty} \allowbreak F(\boldsymbol{x})dx_1dx_2$.
Here by $\sim$ we mean that the difference between the left- and the right-hand side is $O(t^N)$ for any $N\in \N$.
\end{lemma}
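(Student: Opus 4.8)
The plan is to derive the two-dimensional expansion by iterating the classical one-dimensional Euler--Maclaurin formula, once in each variable. The only input is that, for a smooth function $f\colon[0,\infty)\to\R$ with rapid decay and $\alpha\in\R$, one has as $t\to0^+$
\begin{equation*}
\sum_{n\geq 0} f((n+\alpha)t)\sim \frac1t\int_0^\infty f(x)\,dx-\sum_{k\geq0}\frac{B_{k+1}(\alpha)}{(k+1)!}f^{(k)}(0)\,t^k.
\end{equation*}
Everything then follows by applying this twice and collecting the resulting terms.

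First I would apply the one-dimensional formula in the second variable. Fixing the first argument $x_1$ and summing over $n_2$ gives
\begin{equation*}
\sum_{n_2\geq0}F(x_1,(n_2+\alpha_2)t)\sim \frac1t\int_0^\infty F(x_1,x_2)\,dx_2-\sum_{n_2\geq0}\frac{B_{n_2+1}(\alpha_2)}{(n_2+1)!}F^{(0,n_2)}(x_1,0)\,t^{n_2}.
\end{equation*}
Substituting $x_1=(n_1+\alpha_1)t$ and summing over $n_1$, the double sum splits into an integral piece $\frac1t\sum_{n_1\geq0}\int_0^\infty F((n_1+\alpha_1)t,x_2)\,dx_2$ and, for each $n_2$, a boundary piece $-\frac{B_{n_2+1}(\alpha_2)}{(n_2+1)!}t^{n_2}\sum_{n_1\geq0}F^{(0,n_2)}((n_1+\alpha_1)t,0)$.

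Next I would apply the one-dimensional formula in the first variable to each piece. For the integral piece, set $h(x_1):=\int_0^\infty F(x_1,x_2)\,dx_2$; since $h$ is smooth with rapid decay, $\sum_{n_1\geq0}h((n_1+\alpha_1)t)\sim \frac1t\int_0^\infty h\,dx_1-\sum_{n_1\geq0}\frac{B_{n_1+1}(\alpha_1)}{(n_1+1)!}h^{(n_1)}(0)t^{n_1}$. Using $\int_0^\infty h\,dx_1=\mathcal I_F$ and $h^{(n_1)}(0)=\int_0^\infty F^{(n_1,0)}(0,x_2)\,dx_2$, and multiplying by the outer $\frac1t$, one obtains exactly the $\frac{\mathcal I_F}{t^2}$ term together with the third sum in the statement. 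For each boundary piece, set $\phi_{n_2}(x_1):=F^{(0,n_2)}(x_1,0)$ and apply the one-dimensional formula to $\sum_{n_1\geq0}\phi_{n_2}((n_1+\alpha_1)t)$; the integral term supplies the second sum (after collecting the factor $t^{n_2-1}$) and the boundary term supplies the double Bernoulli sum over $n_1,n_2\geq0$. Recombining the four contributions reproduces the right-hand side verbatim, where the reflection identity (\ref{Ber}) may be used to organise the Bernoulli factors if desired.

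The main technical obstacle is justifying that the two asymptotic expansions may be composed, i.e. that substituting the $x_1$-dependent inner expansion into the outer Euler--Maclaurin sum is legitimate to all orders. This requires uniform control of the one-dimensional error terms in the parameter $x_1$: one must check that the remainder after $N$ terms of the inner expansion, viewed as a function of $x_1$, is again smooth with rapid decay and with constants uniform on $[0,\infty)$, so that summing it over $n_1$ and invoking the outer formula contributes only an $O(t^N)$ error. Granting that all partial derivatives of $F$ inherit the rapid decay hypothesis (so that $h$ and each $\phi_{n_2}$ genuinely satisfy the hypotheses of the one-dimensional formula), this uniformity follows from the explicit Bernoulli-polynomial remainder in Euler--Maclaurin, after which the matching of the four groups of terms is routine bookkeeping.
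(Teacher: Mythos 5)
Your proof is correct and follows essentially the same route the paper intends: the paper gives no argument beyond asserting that the (one-dimensional) Euler--Maclaurin formula implies the lemma, and the standard derivation (as in the cited work \cite{BKM}) is exactly your double application of the one-dimensional expansion, once in each variable. Your attention to the uniformity in $x_1$ of the truncated remainder, needed to legitimize summing the inner expansion over $n_1$, is precisely the technical point that makes the iteration rigorous.
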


\subsection{Gauss sums}
We define for $a,b,c\in\Z$ with $c>0$ the {\it quadratic Gauss sums} \begin{equation*}G_c(a,b):= \sum_{n \pmod c} e^{\tfrac{2 \pi i}{c} \left( a n^2 +b n \right)};\end{equation*}
see \cite[Section 1.5]{Br} for some basic properties. We use the following elementary result on the vanishing of $G_c(a,b)$.

\begin{proposition}
\label{P:G=0}
If $\gcd(a,c)\nmid b$, then $G_c(a,b)=0$.
\end{proposition}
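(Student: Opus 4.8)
The plan is to exploit the fact that $G_c(a,b)$ is a sum over a complete residue system modulo $c$, so it is invariant under any integer translation $n \mapsto n+t$. Write $d := \gcd(a,c)$ and make the specific choice $t := c/d$. First I would replace $n$ by $n + c/d$ in the defining sum and expand the quadratic exponent as $a(n+c/d)^2 + b(n+c/d) = (an^2 + bn) + \left(2a(c/d)n + a(c/d)^2 + b(c/d)\right)$. The linear-in-$n$ cross term contributes a factor $e^{\frac{2\pi i}{c}\cdot 2a(c/d)n} = e^{2\pi i\cdot 2(a/d)n}$, which equals $1$ for every integer $n$ because $d \mid a$. Hence the translated sum runs over exactly the same summands as $G_c(a,b)$, and the entire effect of the shift is to pull out a single global phase.

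The second step is to evaluate that global phase $e^{\frac{2\pi i}{c}(at^2 + bt)}$ with $t = c/d$. Using $d \mid a$ and $d \mid c$, the quadratic part satisfies $\frac{a(c/d)^2}{c} = (a/d)(c/d) \in \mathbb{Z}$, so it contributes trivially, while the linear part gives $\frac{b(c/d)}{c} = b/d$. The translation invariance therefore collapses to the identity
\begin{equation*}
G_c(a,b) = e^{2\pi i\, b/d}\, G_c(a,b).
\end{equation*}

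Finally, the hypothesis $\gcd(a,c) = d \nmid b$ means $b/d \notin \mathbb{Z}$, so $e^{2\pi i\, b/d} \neq 1$. Rewriting the identity as $\left(1 - e^{2\pi i\, b/d}\right) G_c(a,b) = 0$ then forces $G_c(a,b) = 0$, which is the claim.

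There is essentially no analytic obstacle in this argument; the only point that requires genuine care is the choice of shift $t = c/d$. This value is dictated by two competing constraints: it must make the cross-term character twist disappear (which needs $c \mid 2at$, hence a multiple of $c/d$), yet remain small enough that the residual phase $e^{2\pi i\, b/d}$ is sensitive to the divisibility of $b$ by $d$. Any coarser shift (a multiple of $c$) would trivialize the phase as well and yield no information, so verifying that $t = c/d$ simultaneously kills the twist and detects $d \nmid b$ is the crux of the proof.
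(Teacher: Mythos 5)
Your proof is correct and complete: the shift $n\mapsto n+c/d$ with $d=\gcd(a,c)$ kills the cross term and the quadratic phase while extracting the factor $e^{2\pi i b/d}$, which is nontrivial precisely when $d\nmid b$, forcing $G_c(a,b)=0$. The paper itself gives no proof of this proposition, citing it as an elementary fact from the reference [Br, Section 1.5]; your translation-invariance argument is exactly the standard one found there, so there is no gap and nothing further to reconcile.
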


\subsection{Shimura theta function}
We require certain theta functions studied, for example, by Shimura \cite{Sh}.
For $\nu\in\{0,1\}$, $h\in\Z$, $N,A\in\N$, with $A|N$, $N|hA$, define
\begin{equation*}
\vartheta_\nu (A,h,N;\tau):=\sum_{\substack{m\in\Z \\ m\equiv h\pmod{N}}}m^\nu  q^{\frac{Am^2}{2N^2}}.
\end{equation*}
Define the {\it slash operator} of weight $k\in\frac12\Z$ ($(\frac{\,\cdot\,}{\,\cdot\,})$ the Jacobi symbol)
\begin{equation*}
f\big|_k\gamma (\tau):= \left(\tfrac{c}{d}\right)^{2k} \varepsilon_d^{2k}(c\tau+d)^{-k}f(\gamma\tau),\qquad \gamma=\left(\begin{smallmatrix}
a & b \\ c & d
\end{smallmatrix}\right)\in{\rm SL}_2(\Z).
\end{equation*}
Note that if $k\in\Z+\frac12$, we require that $\gamma\in\Gamma_0(4)$. Recall that Shimura's modular transformation formula \cite[Proposition 2.1]{Sh} states that for $\gamma=\left(\begin{smallmatrix} a&b\\c&d \end{smallmatrix}\right) \in \Gamma_0(2N)$, with $2|b$, we have
\begin{equation}\label{Shimura1}
\vartheta_\nu (A,h,N;\tau)\mid_{\tfrac 32} \gamma = e\left(\tfrac{abAh^2}{2N^2}\right)\left(\tfrac{- 2A }{d}\right)\vartheta_\nu(A,ah,N;\tau).
\end{equation}
Here $e(x):=e^{2\pi ix}$, for odd $d$, $\varepsilon_d=1$ or $i$, depending on whether $d\equiv 1\pmod{4}$ or $d\equiv 3\pmod{4}$.

\subsection{Integral evaluations}
\label{sec:Prelim:Int}

We require, for $m\in\Z$,
\begin{align}\label{int1}
\int_{|w|=1}\left(w-w^{-1}\right)w^m\frac{dw}{w}=\int_{|w|=1}w^mdw- \int_{|w|=1}w^{m-2}dw =2\pi i\left(\delta_{m,-1}-\delta_{m,1}\right),
\end{align}
where $\delta_{m,a}=0$ unless $m=a$ in which case it equals $1$ and
\begin{equation}\label{int2}
\frac{1}{2\pi i}\text{PV}\int_{|w|=1}\frac{w^m}{w-w^{-1}}\frac{dw}{w} = \frac12 {\rm sgn}_o(m),
\end{equation}
where ${\rm sgn}_o(m):=\frac{1}{2}{\rm sgn}(m)(1-(-1)^m)$.

\subsection{Higher depth quantum modular forms}
We now give the formal definition of quantum modular forms, following \cite{ZagierQuantum}.

\begin{definition}
	A function $f:\mathcal Q\to \C$ ($\mathcal{Q} \subseteq \mathbb{Q}$) is called a {\it quantum modular form of weight $k \in \frac12\Z$ for a subgroup $\Gamma$ of ${\rm SL}_2(\Z)$ (of $\Gamma_0(4)$ if $k\in\Z+\frac12$) and quantum set $\mathcal Q$} if for $\gamma=\left(\begin{smallmatrix}
	a & b\\ c & d
	\end{smallmatrix}\right)\in\Gamma$, the function
	\begin{equation*}
	f(\tau)-f\big|_k\gamma (\tau)
	\end{equation*}
	can be extended to an open subset of $\R$ and is real-analytic there. We denote the vector space of such forms by $\mathcal Q_k(\Gamma)$.
\end{definition}

We next turn to the definition of higher-depth quantum modular forms.
\begin{definition}\label{defgen}
	A function $f:\mathcal Q\to\C$ ($\mathcal Q\subset \Q$) is called a {\it quantum modular form of depth $N\in\N$, weight $k \in \frac12\Z$, and quantum set $\mathcal Q$ for $\Gamma$} if for $\gamma=\left(\begin{smallmatrix}
	a & b\\ c & d
	\end{smallmatrix}\right)\in\Gamma$		
	\begin{equation*}
	f- f\big|_k \gamma \in  \bigoplus_j \mathcal Q_{\kappa_j}^{N_j}(\Gamma)\mathcal O(R),
	\end{equation*}
	where $j$ runs through a finite set, $\kappa_j\in \frac12\Z$, $N_j \in \N$ with $\max_j(N_j)=N-1$, $\mathcal Q_k^1(\Gamma) := \mathcal Q_k(\Gamma)$, $\mathcal Q_k^0(\Gamma):=1$, and $\mathcal Q_{k}^{N}(\Gamma)$ is the space of quantum modular forms of weight $k$ and depth $N$ for $\Gamma$.
\end{definition}
For $f_j\in S_{k_j}(\Gamma)$, the space of cusp forms of weight $k_j$ for $\Gamma$ with $k_j>\frac12$ define the {\it (non-holomorphic) Eichler integrals}
\begin{align*}
I_f(\tau)&:=\int_{-\overline{\tau}}^{i\infty}\frac{f(w)}{\left(-i(w+\tau)\right)^{2-k}}dw,\\
I_{f_1,f_2}(\tau) &:= \int_{-\overline{\tau}}^{i\infty} \int_{w_1}^{i\infty} \frac{f_1(w_1)f_2(w_2)}{(-i(w_1+\tau))^{2-k_1}(-i(w_2+\tau))^{2-k_2}} dw_2 dw_1,
\end{align*}
and the {\it errors of modularity}, for $\varrho \in \Q$
\begin{align*}
r_{f, \varrho}(\tau)&:=\int_{\varrho}^{i\infty}\frac{f(w)}{\left(-i(w+\tau)\right)^{2-k}}dw,\\
r_{f_1,f_2,\varrho}(\tau) &:= \int_{\varrho}^{i\infty} \int_{w_1}^{\varrho} \frac{f_1(w_1) f_2(w_2)}{(-i(w_1+\tau))^{2-k_1} (-i(w_2+\tau))^{2-k_2}} dw_2 dw_1.
\end{align*}
The next result is  \cite[Theorem 5.1]{BKM}.
\begin{theorem}\label{quantheorem}
	We have, for $\gamma=\left(\begin{smallmatrix}a&b\\c&d\end{smallmatrix}\right)\in\Gamma^\ast:=\left(\begin{smallmatrix}-1 & 0 \\ 0 & 1\end{smallmatrix}\right)\Gamma\left( \begin{smallmatrix}-1 & 0 \\ 0 & 1\end{smallmatrix}\right)$,
	\begin{equation*}
	 I_{f_1,f_2}(\tau)- I_{f_1,f_2}\mid_{k_1+k_2-4} \gamma (\tau) = r_{f_1,f_2,\tau,\frac{d}{c}}(\tau) + I_{f_1}(\tau)r_{f_2,\frac{d}{c}}(\tau).
	\end{equation*}
	Moreover  $r_{f_1,f_2, \frac{d}{c}}\in \mathcal O(\R)$.
\end{theorem}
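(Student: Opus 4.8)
The plan is to derive the transformation entirely from the automorphy of $f_1,f_2$ via a single change of variables, followed by a careful splitting of the nested contours; the conjugation defining $\Gamma^{\ast}$ is what makes the kernel $-i(w+\tau)$ (a \emph{sum} rather than a difference) behave covariantly. As a warm-up I would first treat the single integral $I_f$. Writing $\gamma=\left(\begin{smallmatrix}a&b\\c&d\end{smallmatrix}\right)\in\Gamma^{\ast}$ and $\gamma_0:=\left(\begin{smallmatrix}-1&0\\0&1\end{smallmatrix}\right)\gamma\left(\begin{smallmatrix}-1&0\\0&1\end{smallmatrix}\right)=\left(\begin{smallmatrix}a&-b\\-c&d\end{smallmatrix}\right)\in\Gamma$, one checks the elementary identities $-\overline{\gamma\tau}=\gamma_0(-\overline\tau)$ and $\gamma\tau=-\gamma_0(-\tau)$; combined with $\det\gamma_0=1$ these give $-i(\gamma_0\tilde w+\gamma\tau)=\frac{-i(\tilde w+\tau)}{(-c\tilde w+d)(c\tau+d)}$. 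Substituting $w=\gamma_0\tilde w$ in $I_f(\gamma\tau)$ and using $f(\gamma_0\tilde w)=(-c\tilde w+d)^{k}f(\tilde w)$ and $dw=(-c\tilde w+d)^{-2}d\tilde w$, the factors $(-c\tilde w+d)$ cancel (total exponent $k+(2-k)-2=0$) while the limits become $-\overline\tau$ and $\gamma_0^{-1}(i\infty)=\frac{d}{c}$, so that $I_f\big|_{2-k}\gamma(\tau)=\int_{-\overline\tau}^{d/c}\frac{f(w)}{(-i(w+\tau))^{2-k}}dw$ and hence $I_f(\tau)-I_f\big|_{2-k}\gamma(\tau)=r_{f,d/c}(\tau)$.

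Applying the same substitution simultaneously in both variables $w_1,w_2$ of $I_{f_1,f_2}(\gamma\tau)$, the two kernels produce a factor $(c\tau+d)^{(2-k_1)+(2-k_2)}$ that the slash in weight $k_1+k_2-4$ removes, the automorphy factors $(-c\tilde w_j+d)$ cancel as before, the outer range $[-\overline\tau,i\infty]$ becomes $[-\overline\tau,\frac{d}{c}]$, and the inner range $[w_1,i\infty]$ becomes $[w_1,\frac{d}{c}]$ (the lower limit $w_1=\gamma_0\tilde w_1$ being fixed). Thus $I_{f_1,f_2}\big|_{k_1+k_2-4}\gamma(\tau)=\int_{-\overline\tau}^{d/c}\int_{w_1}^{d/c}\Phi\,dw_2\,dw_1$, where $\Phi$ denotes the integrand of $I_{f_1,f_2}$.

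Setting $\varrho:=\frac{d}{c}$ and using that $\Phi$ is holomorphic in each variable on $\mathbb{H}$ (the kernels have no singularity there), I would split $\int_{-\overline\tau}^{i\infty}=\int_{-\overline\tau}^{\varrho}+\int_\varrho^{i\infty}$ in $w_1$ and $\int_{w_1}^{i\infty}=\int_{w_1}^{\varrho}+\int_\varrho^{i\infty}$ in $w_2$. Subtracting the slashed term, which is exactly the $\int_{-\overline\tau}^{\varrho}\int_{w_1}^{\varrho}$ piece, leaves $\int_{-\overline\tau}^{\varrho}\int_\varrho^{i\infty}\Phi\,dw_2\,dw_1+\int_\varrho^{i\infty}\int_{w_1}^{i\infty}\Phi\,dw_2\,dw_1$. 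In the first term the inner limits are independent of $w_1$, so it factors as $\big(\int_{-\overline\tau}^{\varrho}\tfrac{f_1(w_1)}{(-i(w_1+\tau))^{2-k_1}}dw_1\big)r_{f_2,\varrho}(\tau)=(I_{f_1}(\tau)-r_{f_1,\varrho}(\tau))r_{f_2,\varrho}(\tau)$; in the second term splitting the inner integral once more peels off $r_{f_1,f_2,\varrho}(\tau)$ plus the factored piece $r_{f_1,\varrho}(\tau)r_{f_2,\varrho}(\tau)$. Collecting everything, the two copies of $r_{f_1,\varrho}(\tau)r_{f_2,\varrho}(\tau)$ cancel and I am left with $r_{f_1,f_2,\varrho}(\tau)+I_{f_1}(\tau)r_{f_2,\varrho}(\tau)$, as claimed. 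Real-analyticity of $r_{f_1,f_2,d/c}$ then follows because $f_1,f_2$, being cusp forms, decay exponentially both at $i\infty$ and at the cusp $\frac{d}{c}$, so the iterated integral converges absolutely and, differentiating under the integral sign, defines a real-analytic function of $\tau$ on $\mathbb{R}$.

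I expect the main obstacle to be the contour bookkeeping of the third paragraph: justifying each deformation (holomorphy of $\Phi$, convergence at both $\frac{d}{c}$ and $i\infty$, and the absence of branch-cut crossings of the kernels), and matching the leftover pieces so that the spurious product $r_{f_1,\varrho}(\tau)r_{f_2,\varrho}(\tau)$ cancels exactly. A secondary delicate point is the exact cancellation of the automorphy factors $(-c\tilde w_j+d)$ in the two-variable change of variables, which depends on the slash weight being precisely matched to the kernel exponents; an error in any of these exponents would leave uncancelled factors and destroy the clean identity.
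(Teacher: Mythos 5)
Your argument is correct in structure, and there is in fact nothing in the paper to compare it against: the paper does not prove this statement but imports it verbatim as Theorem 5.1 of [BKM], and your proof is essentially the standard one given there. The conjugation identities $-\overline{\gamma\tau}=\gamma_0(-\overline{\tau})$ and $-i(\gamma_0 w+\gamma\tau)=\frac{-i(w+\tau)}{(-cw+d)(c\tau+d)}$ are right, the automorphy factors $(-c\tilde w_j+d)$ do cancel (exponent $k_j+(2-k_j)-2=0$), and your contour bookkeeping checks out: the four pieces of the split at $\varrho=\frac{d}{c}$ are $\int_{-\overline{\tau}}^{\varrho}\int_{w_1}^{\varrho}$ (the slashed term), $\int_{-\overline{\tau}}^{\varrho}\int_{\varrho}^{i\infty}=(I_{f_1}-r_{f_1,\varrho})r_{f_2,\varrho}$, $\int_{\varrho}^{i\infty}\int_{w_1}^{\varrho}=r_{f_1,f_2,\varrho}$, and $\int_{\varrho}^{i\infty}\int_{\varrho}^{i\infty}=r_{f_1,\varrho}r_{f_2,\varrho}$, so the cross terms cancel exactly as you say.

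Two points deserve attention. The main one is the weight. Your (correct) computation gives $I_{f_1,f_2}(\gamma\tau)=(c\tau+d)^{(2-k_1)+(2-k_2)}\int_{-\overline{\tau}}^{\varrho}\int_{w_1}^{\varrho}\Phi\,dw_2\,dw_1$, and with the paper's slash convention $f|_k\gamma(\tau)=\left(\tfrac{c}{d}\right)^{2k}\varepsilon_d^{2k}(c\tau+d)^{-k}f(\gamma\tau)$, the weight that removes this factor is $4-k_1-k_2$, not $k_1+k_2-4$: slashing in weight $k_1+k_2-4$ multiplies by a second factor $(c\tau+d)^{4-k_1-k_2}$ rather than cancelling the first. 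So what your argument actually proves is the identity with $|_{4-k_1-k_2}$; the printed weight (quoted from [BKM]) is a sign typo, as is the stray subscript in $r_{f_1,f_2,\tau,\frac{d}{c}}$. Note that for $k_1=k_2=\frac32$ your weight is $1$, consistent with the paper's repeated description of these objects as weight-one, depth-two quantum modular forms, whereas $k_1+k_2-4=-1$. You should state this explicitly instead of asserting that the printed weight ``removes'' the factor, which under the stated conventions is false. The second, smaller point: for $k_j\in\mathbb{Z}+\frac12$ the automorphy of $f_j$ under $\gamma_0$ and the splitting $(AB)^{2-k_j}=A^{2-k_j}B^{2-k_j}$ of the kernel both produce constant multiplier and branch factors (this is what the $\left(\tfrac{c}{d}\right)^{2k}\varepsilon_d^{2k}$ in the slash is there to absorb); a complete proof must track them, though in the paper's application they are trivial on the congruence subgroup used. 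Your real-analyticity sketch is the right idea, with the one refinement that the only delicate point of $\mathbb{R}$ is $\tau=-\frac{d}{c}$, where the endpoint singularity of the kernel is killed by the exponential vanishing of the cusp forms at the cusp $\frac{d}{c}$.
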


\subsection{Definitions and notation}
\label{sec:Prelim:Def}

In this section we recall the construction of $Z(q)$ following \cite{BMM}, which is another invariant that is closely related to $\widehat{Z}_{\b{a}}(q)$ from \eqref{Gukov:ZqDef}.
Consider a tree $G$ with $N$ vertices labeled by integers $m_{jj}$, $ 1 \leq j \leq N$, which is called a {\it plumbing graph}. To this data we associate an $N \times N$ matrix $M=(m_{jk})_{1 \leq j,k \leq N}$, called its {\it linking} (or {\it plumbing}) {\it matrix}, such that $m_{jk}=-1$ if vertex $j$ is connected to vertex $k$ and zero otherwise.
We say that two plumbing matrices $M$ and $M'$ are {\it equivalent} if their underlying graphs are isomorphic, and there is a graph isomorphism that maps $M$ to $M'$. The first homology group of $M_3(G)$ (the plumbed $3$-manifold
constructed from $G$ and $M$) is
\begin{equation*}H_1(M_3(G),\mathbb{Z}) \cong {\rm coker}(M) =\mathbb{Z}^N/M\mathbb{Z}^N.\end{equation*}
If $M$ is invertible, then this group is finite and if $M \in \text{SL}_N(\mathbb{Z})$, then $H_1(M_3,\mathbb{Z})=0$; this is the case for the main results of this paper, as $M$ is positive definite and unimodular.

To each edge $j - k$  in $G$ we associate a rational function
\begin{equation}\label{f}
f(w_j,w_k):=\frac{1}{\big(w_j-w_j^{-1}\big)\big(w_k-w_k^{-1}\big)}
\end{equation}
and to each vertex $w_j$ a Laurent polynomial
\begin{equation}\label{g}
g(w_j):= \big(w_j - w_j^{-1}\big)^2.
\end{equation}

For a fixed tree $G$ and positive definite $M$, set
\begin{equation}
\label{E:ZqDef}
Z(q):= \frac{q^{\frac{-3N+\sum_{\nu=1}^Na_\nu}{2}}}{(2\pi i)^N} \text{PV} \int_{|w_j|=1} \prod_{j=1}^N g(w_j) \prod_{(k,\ell) \in E} f(w_k,w_\ell) \Theta_M(q;{{\b w}}) \frac{dw_j}{w_j},
\end{equation}
where we let $a_j:=m_{jj}$ for the vertex labels,  $w_j:=e^{2 \pi i z_j}$
\begin{equation*}
\Theta_M(q;\boldsymbol{w}):=\sum_{\boldsymbol{n}\in\mathbb Z^N}q^{\frac12 \b n^T M \b n}e^{2\pi i\b n^T M \b z}.
\end{equation*}
Note that we may write
\begin{equation}
\label{E:Theta2}	
\Theta_M(q;\b{w}) =\sum_{\b{m}\in M \mathbb Z^N}q^{\frac12 \b m^T M^{-1}\b m}e^{2\pi i\b m^T\b z}.
\end{equation}
The following result is given in Proposition 3.4 of \cite{BKM}.
\begin{proposition}
\label{P:uni}
If $M$ is unimodular, then  $Z(q)=\widehat{Z}_{\b \delta}(q^{2})$, where  $\widehat{Z}_{\b \delta}(q)$ is defined in (\ref{Gukov:ZqDef}).
\end{proposition}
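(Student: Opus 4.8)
The plan is to exploit that $Z(q)$ and $\widehat Z_{\b\delta}(q^2)$ are assembled from the same integration kernel and the same prefactor, so that the identity reduces to a comparison of the two theta functions under a single integration functional. First I would check that the substitution $q\mapsto q^2$ in \eqref{Gukov:ZqDef} converts the prefactor $q^{\frac{-3N+{\rm tr}(M)}{4}}$ into $q^{\frac{-3N+{\rm tr}(M)}{2}}$, which equals the prefactor of $Z(q)$ in \eqref{E:ZqDef} because $\sum_\nu a_\nu={\rm tr}(M)$; the same substitution turns the exponent $\frac14\boldsymbol\ell^T M^{-1}\boldsymbol\ell$ in $\Theta_{-M,\b\delta}$ into $\frac12\boldsymbol\ell^T M^{-1}\boldsymbol\ell$, matching the exponent in the representation \eqref{E:Theta2} of $\Theta_M$. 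It therefore suffices to prove that the principal value integral applied to $\Theta_M(q;\b w)$ coincides with the one applied to $\Theta_{-M,\b\delta}(q^2;\b w)$.

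Next I would observe that the kernel factorizes over the vertices. Grouping, for each vertex $j$, the factor $g(w_j)$ from \eqref{g} with the $d_j:=\deg(v_j)$ factors $(w_j-w_j^{-1})^{-1}$ contributed by the incident edges in \eqref{f} gives
\[
\prod_{j=1}^N g(w_j)\prod_{(k,\ell)\in E}f(w_k,w_\ell)=\prod_{j=1}^N\big(w_j-w_j^{-1}\big)^{2-d_j}.
\]
Consequently the integration functional $L$ common to \eqref{Gukov:ZqDef} and \eqref{E:ZqDef} (the normalized principal value integral against this kernel, with $\Theta$ removed) splits as a product $L=\prod_j L_j$ of one-variable principal value integrals, with $L(\b w^{\b m})=\prod_j L_j(w_j^{m_j})$ on monomials.

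The key step is a parity selection. Under $w_j\mapsto-w_j$ the contour $|w_j|=1$ and its principal value at $w_j=\pm1$ are preserved, whereas $(w_j-w_j^{-1})^{2-d_j}w_j^{m_j}$ acquires the factor $(-1)^{2-d_j+m_j}$; hence $L_j(w_j^{m_j})=0$ unless $m_j\equiv d_j\pmod 2$, equivalently unless $m_j\equiv\delta_j\pmod 2$ since $\delta_j\equiv\deg(v_j)\pmod 2$ (the surviving evaluations being exactly \eqref{int1} and \eqref{int2}). Using unimodularity, $\det M=1$ gives $M\mathbb Z^N=\mathbb Z^N$, so by \eqref{E:Theta2}
\[
\Theta_M(q;\b w)=\sum_{\b m\in\mathbb Z^N}q^{\frac12\b m^T M^{-1}\b m}\b w^{\b m},
\]
and applying $L$ term by term annihilates every $\b m$ with $\b m\not\equiv\b\delta\pmod 2$, leaving exactly the index set $2\mathbb Z^N+\b\delta=2M\mathbb Z^N+\b\delta$. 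This is precisely the summation lattice of $\Theta_{-M,\b\delta}(q^2;\b w)$, on which the two series carry identical coefficients, so $L(\Theta_M(q))=L(\Theta_{-M,\b\delta}(q^2))$; together with the matching prefactors this yields $Z(q)=\widehat Z_{\b\delta}(q^2)$.

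The hardest part will be justifying the term-by-term evaluation of $L$ through the principal value, since the kernel has poles on the contour at $w_j=\pm1$ for the odd-degree vertices. I would control this by noting that for $|q|<1$ the series $\Theta_M(q;\b w)$ converges absolutely and uniformly on the torus $\{|w_j|=1\}$ (the exponents $\frac12\b m^T M^{-1}\b m$ grow quadratically because $M^{-1}$ is positive definite), while each $L_j(w_j^m)$ grows at most polynomially in $m$, and is in fact bounded for the degree $\le 3$ vertices occurring here by \eqref{int1} and \eqref{int2}. Since the Gaussian coefficients decay faster than any polynomial, this legitimizes interchanging the lattice sum with the singular integral and completes the argument.
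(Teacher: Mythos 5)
Your proof is correct and is essentially the argument behind the paper's citation: the paper offers no proof of Proposition \ref{P:uni} itself, quoting Proposition 3.4 of \cite{BKM}, and the proof there --- mirrored in this paper's own computation for Proposition \ref{prop:Zq} --- proceeds exactly along your route, expanding $\Theta_M$ over $\mathbb{Z}^N$ via \eqref{E:Theta2} and unimodularity, evaluating the torus integral monomial by monomial through \eqref{int1} and \eqref{int2} so that only the terms with $\b m \equiv \b \delta \pmod 2$ survive, and matching the prefactor and theta exponents under $q \mapsto q^2$. Your $w_j \mapsto -w_j$ parity symmetry (in place of reading the parity off the explicit evaluations) and the rapid-decay estimate legitimizing the interchange of the lattice sum with the principal-value integral are tidy, slightly more careful packagings of those same steps, and they correctly cover the degree at most $3$ vertices that occur here.
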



\section{Some general construction}\label{sec:general}

In this section we construct an infinite family of quantum modular forms of depth two closely following the arguments in \cite{Br}.
Define
\begin{equation} \label{quantum-FS}
\mathcal Q_{\mathcal S,Q,\varepsilon}:=\left\{\tfrac{h}{k}\in\mathbb Q\,:\, \gcd(h,k)=1,\,k\in\N,\,  \sum_{\b{\alpha}\in\mathcal S} \varepsilon(\b \alpha) \sum_{\b{\ell}\pmod{k}} e^{2\pi i\frac hk KQ(\b \ell + \b \alpha)} = 0\right\}.
\end{equation}

\noindent We write $Q(\b n) =: \sigma_1n_1^2+2\sigma_2n_1n_2+\sigma_3n_2^2$,  and denote its discriminant by $D:=\sigma_1\sigma_3-\sigma_2^2.$
We also regularly use the relationship between the quadratic form and the associated bilinear form, namely
\begin{equation}
\label{E:QB}
Q(\b{x} + \b{y}) - Q(\b{x}) - Q(\b{y}) = B(\b{x}, \b{y}).
\end{equation}

\begin{theorem}\label{QMT}
	The functions $F_{\mathcal S,Q,\varepsilon}$ are quantum modular forms of depth two, weight one,  on some congruence subgroup containing $\Gamma(8 \cdot {\rm lcm}(\sigma_1,\sigma_3)KD)$, and quantum set $\mathcal Q_{\mathcal S,Q,\varepsilon}$.
\end{theorem}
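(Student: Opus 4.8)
The plan is to identify $F_{\mathcal S,Q,\varepsilon}$ with the holomorphic part of a weight-one real-analytic modular object built from $M_2$, so that its modular anomaly is governed by the double Eichler integrals of Theorem \ref{quantheorem}, and then to locate the quantum set via the Euler--Maclaurin expansion of Lemma \ref{EulerMcLaurin}. First I would symmetrize the defining sum. The closure of $\mathcal S$ under $\boldsymbol\alpha\mapsto(1,1)-\boldsymbol\alpha,(1-\alpha_1,\alpha_2),(\alpha_1,1-\alpha_2)$ and the matching invariance of $\varepsilon$ are exactly what is needed to fold the four translates of the quadrant $\mathbb N_0^2$ into one full-lattice sum, rewriting
\[
F_{\mathcal S,Q,\varepsilon}(\tau)=\tfrac14\sum_{\boldsymbol\alpha\in\mathcal S}\varepsilon(\boldsymbol\alpha)\sum_{\boldsymbol n\in\Z^2}\sgn^*(n_1)\,\sgn^*(n_2)\,q^{KQ(\boldsymbol n+\boldsymbol\alpha)},
\]
which is the rank two false theta function from the introduction. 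Completing the square in $Q=\sigma_1n_1^2+2\sigma_2n_1n_2+\sigma_3n_2^2$ identifies a shear parameter $\kappa$ (depending on $\sigma_1,\sigma_2,\sigma_3,D$) for which the two sign conditions take the form $\sgn(w_1)\sgn(w_2+\kappa w_1)$ appearing in the definitions of $E_2$ and $M_2$.

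Second I would pass to the non-holomorphic completion. Replacing the product $\sgn^*\sgn^*$ by the corresponding value of $M_2(\kappa;\cdot)$, normalized by the powers of $\sqrt v$ dictated by the Proposition, produces a function $\widehat F$ that transforms as a genuine real-analytic modular form of weight one. The splitting \eqref{splitM} writes $M_2$ as the sum of $E_2$, the product of sign functions (which recovers $F$ itself), and two terms of the shape $\sgn\cdot E$; feeding the integral representation \eqref{Mrep} into the $M_2$-term expresses $\widehat F-F$ as a linear combination of double Eichler integrals $I_{f_1,f_2}$ together with products $I_{f_1}\cdot(\text{error integral})$, where $f_1,f_2$ are the weight-$\tfrac32$ unary Shimura theta functions $\vartheta_\nu$ attached to the two diagonalizing directions of $KQ$.

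It then remains to read off the transformation law and the quantum set. Since $\widehat F$ is modular of weight one, the anomaly $F-F\big|_1\gamma$ equals, up to sign, the anomaly of the Eichler-integral part, which by Theorem \ref{quantheorem} has the shape
\[
r_{f_1,f_2,\frac dc}+I_{f_1}\,r_{f_2,\frac dc},
\]
with $r_{f_1,f_2,\frac dc},r_{f_2,\frac dc}\in\mathcal O(\R)$ and $I_{f_1}$ a depth-one (weight $\tfrac12$) quantum modular form; hence $F-F\big|_1\gamma\in\mathcal Q^1\mathcal O(\R)$, giving depth two. The Shimura transformation \eqref{Shimura1} of the $\vartheta_\nu$ forces the group to contain $\Gamma(8\,\mathrm{lcm}(\sigma_1,\sigma_3)KD)$. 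For the quantum set, applying Lemma \ref{EulerMcLaurin} to each partial theta $\sum_{\boldsymbol n\in\mathbb N_0^2}q^{KQ(\boldsymbol n+\boldsymbol\alpha)}$ as $\tau\to\frac hk$ yields an asymptotic expansion whose polar and polynomial terms all carry a finite Gauss sum $\sum_{\boldsymbol\ell\pmod k}e^{2\pi i\frac hk KQ(\boldsymbol\ell+\boldsymbol\alpha)}$ as a factor; after weighting by $\varepsilon(\boldsymbol\alpha)$ these cancel and the radial limit exists exactly when the sum in \eqref{quantum-FS} vanishes, with Proposition \ref{P:G=0} controlling the vanishing of the individual Gauss sums.

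The main obstacle is the second step: constructing $\widehat F$ with the correct normalization and verifying that \eqref{splitM} and \eqref{Mrep} produce precisely double Eichler integrals of honest weight-$\tfrac32$ Shimura theta functions lying in a common space of cusp forms, so that Theorem \ref{quantheorem} applies verbatim. This forces a careful matching of the shear parameter $\kappa$ and the arguments of $M_2$ to the shifts $\boldsymbol\alpha$, together with the delicate cancellation of the divergent Euler--Maclaurin terms against the quantum-set condition; this bookkeeping, rather than any single conceptual step, is where the real work lies.
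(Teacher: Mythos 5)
Your overall architecture (Euler--Maclaurin for the quantum set, double Eichler integrals of weight-$\tfrac32$ Shimura theta functions plus Theorem \ref{quantheorem} for the depth-two structure) matches the paper, but two of your concrete steps fail, and the second is fatal. First, the folding identity in your opening step is false: in the full-lattice sum $\sum_{\b n\in\Z^2}\sgn^*(n_1)\sgn^*(n_2)q^{KQ(\b n+\b\alpha)}$ the two mixed quadrants do not reproduce shifted copies of $Q$ but of the reflected form $\widetilde Q(x_1,x_2):=Q(-x_1,x_2)$, since $Q(x_1,-x_2)=\sigma_1x_1^2-2\sigma_2x_1x_2+\sigma_3x_2^2\neq Q(x_1,x_2)$ whenever $\sigma_2\neq0$. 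Up to the symmetrization of $\mathcal S$, your right-hand side equals $\tfrac12\bigl(F_{\mathcal S,Q,\varepsilon}-F_{\mathcal S,\widetilde Q,\varepsilon}\bigr)$, not $F_{\mathcal S,Q,\varepsilon}$; this is exactly why the paper's proof of Theorem \ref{T:mainquantum} decomposes the lattice sum as $\mathcal Z_1(q)-\mathcal Z_2(q)$ with the two distinct forms $Q$ and $Q^*$. Second, and more seriously, the completion $\widehat F$ of your second step does not exist: inserting $M_2(\kappa;\,\cdot\,\sqrt{Kv})$ while keeping the holomorphic weights $q^{+KQ(\b n+\b\alpha)}$ does not produce a real-analytic modular form. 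The Zwegers--Vign\'eras completion mechanism applies to indefinite quadratic forms; here $Q$ is positive definite and $F$ is a false theta function, which is precisely the situation in which no modular completion on the upper half-plane exists (were it to exist, quantum modularity would be beside the point). Since your entire transfer of modularity to $F$ (``the anomaly of $F$ equals the anomaly of the Eichler-integral part'') rests on the exact identity $\widehat F=F+(\text{Eichler integrals})$ on $\mathbb H$, the argument collapses at this step.

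What the paper does instead is to introduce a companion function, not a completion: $\mathbb E_{\mathcal S,Q,\varepsilon}$ sums $M_2(\kappa;\,\cdot\,\sqrt{Kv})$ against the \emph{reciprocal} powers $q^{-KQ(\b n)}$ over the full lattice. With this sign, the representation \eqref{Mrep} turns $\mathbb E_{\mathcal S,Q,\varepsilon}$ exactly into a linear combination of double Eichler integrals $I_{T_1,T_2}$, $I_{U_1,U_2}$ of Shimura theta functions (Lemma \ref{thetalemma}), to which Theorem \ref{quantheorem} and \eqref{Shimura1} apply. The bridge back to $F$ is not an identity on $\mathbb H$ but an asymptotic duality at the rationals of the quantum set: Lemma \ref{Fthm} shows, via Euler--Maclaurin applied to \emph{both} functions, that $F$ and $\mathbb E$ have matching expansions ($\sum a_{h,k}(m)t^m$ versus $\sum a_{-h,k}(m)(-t)^m$) at $\tfrac hk\in\mathcal Q_{\mathcal S,Q,\varepsilon}$, the quantum-set condition \eqref{quantum-FS} killing the $t^{-1}$ terms of both expansions, and identities such as
\begin{equation*}
G^{(n_1,n_2)}(0,0)+(-1)^{n_1+1}\widetilde G^{(n_1,n_2)}(0,0)=i^{n_1+n_2}g^{(n_1,n_2)}(0,0)
\end{equation*}
absorbing precisely the reflected-form ($\widetilde Q$) contributions that your folding overlooked. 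Without this asymptotic-matching step, your proposal has no mechanism for transporting the transformation law of the double Eichler integrals to $F$ itself.
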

Before proving Theorem \ref{QMT}, we require some auxiliary lemmas. Set
\begin{equation*}
\mathbb E_{\mathcal S,Q,\varepsilon}(\tau) :=\sum_{\b{\alpha}\in\mathcal S}\varepsilon(\b{\alpha})\mathbb F_{Q,\b{\alpha}}(\tau),
\end{equation*}
where
\begin{equation*}
\mathbb F_{Q,\b \alpha}(\tau) := \tfrac12\sum_{\b n\in\Z^2+\b{\alpha}} M_2\left(\kappa;(a_1n_1+a_2n_2,b_2n_2)\sqrt{Kv}\right)q^{-KQ(\b n)}
\end{equation*}
with
\begin{align*}
\kappa := \tfrac{\sigma_2}{\sqrt{D}}, \qquad a_1 := 2\sqrt{\sigma_1},\qquad a_2:=\tfrac{2\sigma_2}{\sqrt{\sigma_1}},\qquad b_2:=2\sqrt{\tfrac{D}{\sigma_1}}.
\end{align*}

We begin by determining the asymptotic expansions of these functions.

\begin{lemma} \label{Fthm}
If $\frac{h}{k}\in\mathcal Q_{\mathcal S,\varepsilon}$, then we have the asymptotic expansions (as $t\to 0^+$)
	\begin{align}\label{as}
	F_{\mathcal S,Q,\varepsilon}\left(\tfrac hk+\tfrac{it}{2\pi}\right) =: \sum_{m\geq 0} a_{h,k}(m) t^m,\qquad
	\mathbb E_{\mathcal S,Q,\varepsilon}\left(\tfrac hk+\tfrac{it}{2\pi}\right) = \sum_{m\geq 0} a_{-h,k}(m) (-t)^m.
	\end{align}
\end{lemma}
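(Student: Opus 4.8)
The plan is to derive the two radial asymptotic expansions separately and then identify their coefficients. Throughout set $\tau=\frac hk+\frac{it}{2\pi}$, so that $q=e^{2\pi i\frac hk}e^{-t}$ and $t\to 0^+$ realizes the radial limit $\tau\to\frac hk$.

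\emph{The expansion of $F$.} I would first split each inner sum over $\mathbf n\in\mathbb N_0^2$ into residue classes, writing $\mathbf n=k\mathbf m+\boldsymbol\ell$ with $\boldsymbol\ell$ running modulo $k$ and $\mathbf m\in\mathbb N_0^2$. Since the coefficients of $Q$ are integral and $K\boldsymbol\alpha\in\mathbb Z^2$, expanding $KQ(\mathbf n+\boldsymbol\alpha)$ via \eqref{E:QB} shows that the root-of-unity factor $e^{2\pi i\frac hk KQ(\mathbf n+\boldsymbol\alpha)}$ is $k$-periodic in $\mathbf n$ and hence factors out as $e^{2\pi i\frac hk KQ(\boldsymbol\ell+\boldsymbol\alpha)}$. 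The remaining sum $\sum_{\mathbf m\in\mathbb N_0^2}e^{-tk^2KQ(\mathbf m+\boldsymbol\beta)}$, with $\boldsymbol\beta:=\frac1k(\boldsymbol\ell+\boldsymbol\alpha)$, is then of the shape treated by Lemma \ref{EulerMcLaurin}, applied to the rapidly decaying function $\mathbf x\mapsto e^{-k^2KQ(\mathbf x)}$ with scaling parameter $s:=\sqrt t$.

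\emph{Cancellations on the $F$-side.} The leading Euler--Maclaurin term produces a multiple of $s^{-2}=t^{-1}$ whose coefficient is $\bigl(\int_0^\infty\!\!\int_0^\infty e^{-k^2KQ(\mathbf x)}dx_1dx_2\bigr)\sum_{\boldsymbol\alpha\in\mathcal S}\varepsilon(\boldsymbol\alpha)\sum_{\boldsymbol\ell\pmod k}e^{2\pi i\frac hk KQ(\boldsymbol\ell+\boldsymbol\alpha)}$; this vanishes exactly because $\frac hk$ lies in the quantum set $\mathcal Q_{\mathcal S,Q,\varepsilon}$ of \eqref{quantum-FS}, which removes the pole. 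The half-integer powers of $t$ arise from the boundary terms of even derivative order; summing over $\mathcal S$ and using its closure under $\boldsymbol\alpha\mapsto(1-\alpha_1,\alpha_2)$ and $\boldsymbol\alpha\mapsto(\alpha_1,1-\alpha_2)$ together with the $\varepsilon$-invariance, I would pair each $\boldsymbol\alpha$ with its coordinate reflections, whereupon the Bernoulli reflection \eqref{Ber} flips the sign of the relevant $B_{n+1}$ and forces these contributions to cancel. What remains is a genuine power series $\sum_{m\ge0}a_{h,k}(m)t^m$ whose coefficients are finite combinations of Gauss sums $G_k(\cdot,\cdot)$ and derivatives of the Gaussian, several of which vanish by Proposition \ref{P:G=0}.

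\emph{The expansion of $\mathbb E$.} For $\mathbb F_{Q,\boldsymbol\alpha}$ I would use the representation \eqref{Mrep}, which exhibits each $M_2$ as a double Eichler integral, so that $\mathbb F_{Q,\boldsymbol\alpha}$ becomes a sum of double Eichler integrals of unary theta functions of Shimura type. Splitting $M_2$ through \eqref{splitM} into one-dimensional pieces involving $E$ and expanding each incomplete Gaussian integral radially at $\frac hk$ produces the same arithmetic data as on the $F$-side, but with the conjugate root of unity $e^{-2\pi i\frac hk}$ in place of $e^{2\pi i\frac hk}$ (because the theta exponent carries the opposite sign in $q^{-KQ}$) and with $t$ replaced by $-t$. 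Consequently the coefficients become precisely $a_{-h,k}(m)$ and the expansion reads $\sum_{m\ge0}a_{-h,k}(m)(-t)^m$, as claimed.

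\emph{Main obstacle.} The delicate part is the $\mathbb E$-side: $M_2$ is a genuinely two-dimensional, depth-two object, and one must justify interchanging the lattice sum with the double Eichler integral, compute the radial asymptotics of $E$ at rational arguments, and verify term by term that the outcome is exactly the $(h,t)\mapsto(-h,-t)$ conjugate of the $F$-expansion --- in particular that neither a pole nor any half-integer power survives and that the remainder is uniformly $O(t^N)$. This again hinges on the reflection symmetry of $\mathcal S$ and $\varepsilon$, the Bernoulli relation \eqref{Ber}, and the Gauss-sum vanishing of Proposition \ref{P:G=0}. Once the pole cancellation is secured by the quantum-set hypothesis, the bookkeeping on the $F$-side is comparatively routine.
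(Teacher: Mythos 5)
Your treatment of the $F$-side is correct and is essentially the paper's argument: split $\b n = \b\ell + k\b n'$, pull out the $k$-periodic root of unity, apply Lemma \ref{EulerMcLaurin} to $g(\b x)=e^{-KQ(\b x)}$ with scaling $k\sqrt t$, kill the $t^{-1}$ term by the quantum-set hypothesis, and kill the half-integer powers by pairing $\b\ell \mapsto (k-1)(1,1)-\b\ell$ against the reflection symmetry of $\mathcal S$ and $\varepsilon$ via \eqref{Ber}. That part needs no changes.

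The gap is on the $\mathbb E$-side, and it is exactly the part you flag as the ``main obstacle'' without resolving it. The paper does \emph{not} expand the double Eichler integrals of \eqref{Mrep} radially; that representation is reserved for the modular transformation (Lemma \ref{thetalemma}). Instead, the paper uses \eqref{splitM} with $\sgn$ replaced by $\sgn^*$ to write $\mathbb E_{\mathcal S,Q,\varepsilon}$ as a lattice sum of the \emph{smooth, rapidly decaying} functions $G(\b x)=\tfrac12 M_2^*\bigl(\kappa;\sqrt{\tfrac{K}{2\pi}}(a_1x_1+a_2x_2,b_2x_2)\bigr)e^{KQ(\b x)}$ and $\widetilde G(x_1,x_2)=G(-x_1,x_2)$ over $\N_0^2+\b\alpha$ (the growth of $e^{KQ}$ is dominated by the decay of $M_2^*$), and then applies the \emph{same} Euler--Maclaurin lemma a second time. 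The conjugate roots of unity $e^{-2\pi i\frac hk KQ(\b\ell+\b\alpha)}$ appear for the reason you give, but the identification of the resulting coefficients with $a_{-h,k}(m)(-t)^m$ rests on two concrete identities that your sketch never produces, namely
\begin{align*}
\int_0^\infty \left(G^{(0,2n_2+1)}(x_1,0) + \widetilde G^{(0,2n_2+1)}(x_1,0)\right) dx_1 &= (-1)^{n_2} \int_0^\infty g^{(0,2n_2+1)}(x_1,0)\, dx_1,\\
G^{(n_1,n_2)}(0,0) + (-1)^{n_1+1}\, \widetilde G^{(n_1,n_2)}(0,0) &= i^{n_1+n_2}\, g^{(n_1,n_2)}(0,0),
\end{align*}
which supply precisely the sign $(-1)^m$ and the matching of Taylor data between the $M_2^*$-weighted sums and the Gaussian $g$. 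Without these (or an equivalent mechanism, e.g.\ a Mellin-transform computation of the radial expansion of the double Eichler integral, which would be substantially harder and is not sketched either), the assertion that ``the coefficients become precisely $a_{-h,k}(m)$'' is a restatement of the lemma rather than a proof of it. In short: replace the proposed detour through \eqref{Mrep} by a second application of Lemma \ref{EulerMcLaurin} to $G$ and $\widetilde G$, and prove the two displayed identities; that is the actual content of the $\mathbb E$-expansion.
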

\begin{proof}
	For the proof we abbreviate
	\begin{equation*}
	F:=F_{\mathcal S, Q,\varepsilon},\qquad \mathbb E:=\mathbb E_{\mathcal S, Q,\varepsilon},\qquad \mathcal Q:=\mathcal Q_{\mathcal S,Q,\varepsilon}.
	\end{equation*}
		
	We first determine the asymptotic expansion of $F$ using the Euler-Maclaurin summation formula.
	We let $\b n\mapsto \b \ell + k\b n$ with $0\leq\b\ell\leq k-1$ (i.e., $0 \leq \ell_j \leq k-1$, $j
	\in \{1,2\}$), $\b n \in \IN_0^2$. The assumption that $K\mathcal S\subset \N^2$ implies that $\frac{h}{k}K Q(\b{\ell} + \b{\alpha} + k\b{n}) \equiv \frac{h}{k} K Q(\b{\ell} + \b{\alpha}) \pmod 1$, thus
	\begin{align*}
	F\left(\tfrac{h}{k}+\tfrac{it}{2\pi}\right)=\sum_{\b \alpha \in \mathcal S} \varepsilon(\b \alpha) \sum_{0\leq \b \ell \leq  k-1} e^{2\pi i\frac hk K Q(\b \ell+ \b \alpha)} \sum_{\b n\in\mathbb{N}_0^2+\frac{1}{k}(\b \ell+\b \alpha)} g\left(k\sqrt{t}\b n\right),
	\end{align*}
	where $g(\b x) := e^{-KQ(\b x)}$. The main term in Lemma \ref{EulerMcLaurin} is
	\begin{equation*}
	\frac{\mathcal I_g}{k^2t}  \sum_{\b{\alpha}\in\mathcal S} \varepsilon(\b \alpha) \sum_{0\leq \b \ell\leq  k-1} e^{2\pi i\frac{h}{k} K Q(\b \ell+\b{\alpha})}.
	\end{equation*}
	Using that $K\mathcal S\subset \mathbb N^2$  we may let $\b\ell$ run$\pmod{k}$. Since $\frac{h}{k}\in\mathcal Q$ the sum vanishes.
	
	The second term in Lemma \ref{EulerMcLaurin} yields
	\begin{equation}\label{Euler}
	-  \sum_{\b{\alpha}\in\mathcal S} \varepsilon(\b \alpha)\sum_{0\leq \b \ell\leq k-1} e^{2\pi i \frac hk K Q(\b \ell+\b{\alpha})} \sum_{n_2\geq 0} \frac{B_{n_2+1}\left(\frac{1}{k}(\ell_2+\alpha_2)\right)}{(n_2+1)!}
	\int_0^\infty g^{(0,n_2)} (x_1,0)dx_1 \left(k\sqrt{t}\right)^{n_2-1}.\hspace{-0.2cm}
	\end{equation}
	Making the change of variables $\b{\ell}\mapsto (k-1)(1,1)-\b{\ell}$ and using that $(1,1)-\b{\alpha}\in\mathcal S$ if $\b{\alpha}\in\mathcal S$, \eqref{Ber} yields that only the odd values of $n_2$ survive, and \eqref{Euler} becomes
	\begin{multline*}
	- \sum_{\b \alpha\in\mathcal S} \varepsilon(\b \alpha)\sum_{0\leq \b \ell\leq k-1} e^{2\pi i\frac hk KQ(\b \ell+\b \alpha)}\sum_{n_2\geq 0} \frac{B_{2n_2+2}\left(\frac{1}{k}(\ell_2+\alpha_2)\right)}{(2n_2+2)!}
	 \int_0^\infty g^{(0,2n_2+1)}(x_1,0)dx_1k^{2n_2} t^{n_2}.
	\end{multline*}
	
	In exactly the same way we obtain that the third term in  Lemma \ref{EulerMcLaurin} equals
	\begin{align*}
	- \sum_{\b \alpha\in\mathcal S} \varepsilon(\b \alpha)\sum_{0\leq \b \ell\leq k-1} e^{2\pi i\frac hk K Q(\b \ell+\b \alpha)}\sum_{n_1\geq 0} \frac{B_{2n_1+2}\left(\frac{1}{k}(\ell_1+\alpha_1)\right)}{(2n_1+2)!}
	 \int_0^\infty g^{(2n_1+1,0)}(0,x_2)dx_2k^{2n_1} t^{n_1}.
	\end{align*}

	For the final term in Lemma \ref{EulerMcLaurin} we obtain, pairing in exactly the same way
	\begin{multline*}
	 \sum_{\b \alpha\in\mathcal S}\varepsilon(\b \alpha) \sum_{0\leq \b \ell\leq  k-1} e^{2\pi i\frac hkKQ(\b \ell+\b \alpha)}
	 \sum_{\substack{n_1,n_2\geq 0 \\ n_1\equiv n_2\pmod{2}}}\hspace{-0.35cm} \frac{B_{n_1+1}\left(\frac1k(\ell_1+\alpha_1)\right)}{(n_1+1)!}\frac{B_{n_2+1}\left(\frac1k(\ell_2+\alpha_2)\right)}{(n_2+1)!}\\
	 \times g^{(n_1,n_2)}(0,0) \left(k\sqrt{t}\right)^{n_1+n_2}.
	\end{multline*}
	In particular we obtain that the asymptotic expansion of $F$ has the shape as claimed in \eqref{as}.
	
	We now turn to the asymptotic behavior of $\mathbb E$. We use \eqref{splitM} and let $M_2^*$ denote the function such that the $\sgn$ in \eqref{splitM} is replaced by $\sgn^*$, where $\sgn^*(x):= \sgn(x)$ if $x \in \R \setminus \{0\}$ and $\sgn^*(0):=1$. We obtain
	\begin{equation}\label{splitMstar}
	\begin{split}
		M_2^*(\kappa;a_1n_1+a_2n_2,b_2n_2)
		= E_2(\kappa;a_1n_1+a_2n_2,b_2n_2) + \sgn^*(n_1)\sgn^*(n_2)  \\&\hspace{-10.5cm}-\sgn^*(n_2)E\left(\tfrac{2}{\sqrt{\sigma_1}}(\sigma_1n_1+\sigma_2n_2)\right)
		- \sgn^*(n_1) E\left(\tfrac{2}{\sqrt{\sigma_3}} (\sigma_2 n_1+\sigma_3n_2)\right).
	\end{split}
	\end{equation}
Proceeding as above
	\begin{multline*}
	\mathbb E\left(\tfrac hk+\tfrac{it}{2\pi}\right)= \sum_{\b \alpha \in\mathcal S} \varepsilon(\b \alpha) \left(\sum_{0\leq \b \ell\leq k-1} e^{-2\pi i\frac hk KQ(\b{\ell}+\b \alpha)} \sum_{\b n \in \mathbb N_0^2+\b \alpha } G\left(k\sqrt{t}\b n\right) \right. \\
	\left. + \sum_{0\leq \b \ell\leq k-1} e^{-2\pi i\frac{h}{k} K\widetilde{Q}(\b{\ell}+\b{\alpha})}\sum_{\b n\in\mathbb N_0^2+\b{\alpha}}\widetilde{G}\left(k\sqrt{t}\b n\right)\right),
	\end{multline*}
where
	\begin{align*}
	\widetilde{Q}(x_1,x_2) := Q(-x_1,x_2)
	\end{align*}
	\begin{align*}	
	G(\b x):= \tfrac 12 M_2^* \left(\kappa; \sqrt{\tfrac{K}{2\pi}}(a_1x_1+a_2x_2, b_2x_2)\right) e^{KQ(\b x)},\qquad \widetilde{G}(x_1,x_2):=G(-x_1,x_2).
	\end{align*}
	
	We again use the Euler-Maclaurin summation formula. The main term in Lemma \ref{EulerMcLaurin} is
	\begin{equation*}
	\tfrac{4\mathcal I_G}{k^2t} \sum_{\b \alpha \in\mathcal S}  \varepsilon(\b \alpha) \sum_{0\leq \b \ell\leq k-1} e^{-2\pi i\frac hkKQ(\b \ell+\b \alpha)}
	+ \tfrac{4\mathcal I_{\widetilde G}}{k^2t}  \sum_{\b{\alpha}\in\mathcal S} \varepsilon(\b{\alpha})\sum_{0\leq \b \ell \leq k-1} e^{-2\pi i\frac{h}{k}K\widetilde Q(\b{\ell}+\b{\alpha})}=0
	\end{equation*}
	by conjugating the condition in $\mathcal Q$.
	
	The second term in Lemma \ref{EulerMcLaurin} is, pairing terms as before,
	\begin{multline*}
	-\sum_{\b \alpha\in\mathcal S}  \varepsilon(\b \alpha) \sum_{0\leq \b\ell\leq  k-1} e^{-2\pi i\frac{h}{k} K Q(\b \ell+\b\alpha)} \sum_{n_2\geq 0} \frac{B_{2n_2+2}\left(\frac1k(\ell_2+\alpha_2)\right)}{(2n_2+2)!} \\
	\times \int_0^\infty \left(G^{(0,2n_2+1)}(x_1,0) + \widetilde{G}^{(0,2n_2+1)}(x_1,0)\right)dx_1 \left(k^2 t\right)^{n_2}.
	\end{multline*}
	It is now straightforward to verify, as in \cite{BKM}, that
	\begin{align*}
	\int_0^\infty \left(G^{(0,2n_2+1)}(x_1,0) + \widetilde G^{(0,2n_2+1)}(x_1,0)\right) dx_1 &= (-1)^{n_2} \int_0^\infty g^{(0,2n_2+1)}(x_1,0) dx_1.
	\end{align*}	
	Via symmetry the third term in Lemma \ref{EulerMcLaurin} is treated in exactly the same way.
	
	The fourth term in Lemma \ref{EulerMcLaurin} is, pairing as before,
	\begin{multline*}
	 \sum_{\b \alpha\in \mathcal S}\! \varepsilon(\b \alpha) \!\sum_{0\leq \b \ell\leq k-1} \!e^{-2\pi i\frac hk KQ(\b \ell + \b \alpha)} \!\!\!\!\! \sum_{\substack{ n_1,n_2\geq 0 \\ n_1\equiv n_2\pmod{2}}}\!\!\!\!\! \frac{B_{n_1+1}\left(\frac1k(\ell_1+\alpha_1)\right)}{(n_1+1)!} \frac{B_{n_2+1}\left(\frac1k(\ell_2+\alpha_2)\right)}{(n_2+1)!} \\
	\times \ \left(G^{(n_1,n_2)}(0,0) + (-1)^{n_1+1} \widetilde G^{(n_1,n_2)}(0,0) \right) \left(k\sqrt{t}\right)^{n_1+n_2}.
	\end{multline*}
	It can now be shown that
	\begin{equation*}
	G^{(n_1,n_2)}(0,0) + (-1)^{n_1+1} \widetilde G^{(n_1,n_2)}(0,0) = i^{n_1+n_2} g^{(n_1,n_2)}(0,0).
	\end{equation*}
	Comparing terms gives the claim.
\end{proof}

Write $\mathcal{A} := K \mathcal{S}$,  and define
\begin{align*}
\mathcal{B} &:= \left\{ 0 \leq \b B < \sigma_1 K :  B_1 = \sigma_1 A_1 + \sigma_2 A_2 + \varrho \sigma_2 K, \, B_2 = A_2 + \varrho K, \text{ for some } \b{A}\! \in\! \mathcal{A}, \varrho\!\! \pmod{\sigma_1}\right\}\!,\\
\mathcal{C} &:= \left\{ 0 \leq \b C < \sigma_3 K :  C_1 = \sigma_2 A_1 + \sigma_3 A_2 + \varrho \sigma_2 K, \, C_2 = A_2 + \varrho K, \text{ for some } \b{A}\! \in \!\mathcal{A}, \varrho \!\! \pmod{\sigma_3}\right\}\!.
\end{align*}
The following lemma rewrites $\mathbb E$ as a two-dimensional theta integral, which is essential in order to calculate modular transformations.
\begin{lemma}\label{thetalemma}
	We have
	\begin{equation*}
	\mathbb E_{\mathcal S,Q,\varepsilon}(\tau) = -\frac{\sqrt{D}}{2\sigma_1K} \sum_{\b B\in\mathcal B} \varepsilon\left(\tfrac{B_1-\sigma_2B_2}{\sigma_1K}, \tfrac{B_2}{K}\right) I_{T_1, T_2}(\tau)
	-\frac{\sqrt{D}}{2\sigma_3K} \sum_{\b C\in\mathcal C} \varepsilon\left(\tfrac{C_2}{K}, \tfrac{C_2-\sigma_2 C_1}{\sigma_3K}\right) I_{U_1, U_2}(\tau),
	\end{equation*}
where
\begin{align*}
T_1(w) &:= \vartheta_1(\sigma_1K,B_1,\sigma_1K;2w),\qquad T_2(w) := \vartheta_1(\sigma_1K,B_2,\sigma_1K;2Dw),\\
U_1(w) &:= \vartheta_1(\sigma_3K,C_1,\sigma_3K;2w),
\qquad
U_2(w) := \vartheta_1(\sigma_3K,C_2,\sigma_3K;2Dw).
\end{align*}
\end{lemma}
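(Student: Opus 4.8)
The plan is to substitute the integral representation of $M_2$ from \eqref{Mrep} into each summand of $\mathbb F_{Q,\b\alpha}$ and to recognize the resulting double integral as a double Eichler integral of Shimura theta functions. Writing $X_1:=(a_1n_1+a_2n_2)\sqrt{Kv}$ and $X_2:=b_2n_2\sqrt{Kv}$ for the arguments fed into $M_2$, the key algebraic identity is $X_1^2+X_2^2=4Kv\,Q(\b n)$, which follows from $\sigma_1 Q(\b n)=(\sigma_1n_1+\sigma_2n_2)^2+Dn_2^2$ together with $D=\sigma_1\sigma_3-\sigma_2^2$. Consequently the Gaussian prefactor $q^{(X_1^2+X_2^2)/(4v)}=q^{KQ(\b n)}$ appearing in \eqref{Mrep} exactly cancels the factor $q^{-KQ(\b n)}$ in $\mathbb F_{Q,\b\alpha}$, and the same cancellation occurs for the second (rotated) term of \eqref{Mrep} because $(X_2+\kappa X_1)^2+(X_1-\kappa X_2)^2=(1+\kappa^2)(X_1^2+X_2^2)$.

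The two terms of \eqref{Mrep} then produce the two sums in the lemma. For the first term I would make the substitution $m_1=K(\sigma_1n_1+\sigma_2n_2)$, $m_2=Kn_2$; a direct computation shows that the inner exponentials turn into $e^{2\pi i w_1 m_1^2/(\sigma_1K)}$ and $e^{2\pi i w_2 Dm_2^2/(\sigma_1K)}$, which are precisely the summands of $T_1(w_1)=\vartheta_1(\sigma_1K,B_1,\sigma_1K;2w_1)$ and $T_2(w_2)=\vartheta_1(\sigma_1K,B_2,\sigma_1K;2Dw_2)$, while the polynomial prefactor $-\tfrac{X_1X_2}{2v}$ becomes a constant multiple of $m_1m_2$, supplying the $\nu=1$ weights in the two copies of $\vartheta_1$. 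For the second term I would instead complete the square in $n_2$, using $\sigma_3 Q(\b n)=(\sigma_2n_1+\sigma_3n_2)^2+Dn_1^2$, and set $m_1=K(\sigma_2n_1+\sigma_3n_2)$, $m_2=Kn_1$, producing $U_1,U_2$ in the same fashion. After interchanging summation and integration (justified as in \cite{Br,BKM} by the convergence of the iterated integral), each contribution is visibly of the form $I_{T_1,T_2}$, respectively $I_{U_1,U_2}$.

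The crux is the bookkeeping of residue classes. Writing $\b n=\b\nu+\b\alpha$ with $\b\nu\in\Z^2$ and $\b A:=K\b\alpha\in\mathcal A$, the variable $m_2=Kn_2$ runs over $A_2+K\Z$; splitting $\nu_2=\sigma_1\nu_2'+\varrho$ with $\varrho\pmod{\sigma_1}$ refines this to $m_2\equiv A_2+\varrho K\pmod{\sigma_1K}$, and one then checks that $m_1\equiv \sigma_1A_1+\sigma_2A_2+\varrho\sigma_2K\pmod{\sigma_1K}$. These are exactly the defining congruences of $\mathcal B$, so the map $\b n\mapsto(m_1,m_2)$ provides a bijection between $\bigcup_{\b\alpha\in\mathcal S}(\Z^2+\b\alpha)$ and the cosets indexed by $\b B\in\mathcal B$; the analogous computation with the roles of $\sigma_1,\sigma_3$ and $n_1,n_2$ exchanged produces $\mathcal C$. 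Finally I would verify that the character converts correctly: inverting the congruences gives $\tfrac{B_1-\sigma_2B_2}{\sigma_1K}=\alpha_1$ and $\tfrac{B_2}{K}=\alpha_2+\varrho$, so by the assumed $\Z^2$-periodicity of $\varepsilon$ the weight $\varepsilon(\b\alpha)$ equals $\varepsilon\big(\tfrac{B_1-\sigma_2B_2}{\sigma_1K},\tfrac{B_2}{K}\big)$, and likewise for $\mathcal C$. Collecting the prefactor constants yields the coefficients $-\tfrac{\sqrt D}{2\sigma_1K}$ and $-\tfrac{\sqrt D}{2\sigma_3K}$ stated in the lemma. I expect this last matching — proving that the union of shifted lattices maps bijectively and without overcounting onto $\mathcal B$ and $\mathcal C$ with consistent $\varepsilon$-weights — to be the main obstacle, whereas the analytic steps are routine once \eqref{Mrep} is in hand.
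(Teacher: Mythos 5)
Your proposal is correct and follows essentially the same route as the paper: substitute the integral representation \eqref{Mrep} into $\mathbb F_{Q,\b\alpha}$ (with the Gaussian prefactors cancelling $q^{-KQ(\b n)}$), rescale and change variables to $\nu_1=\sigma_1n_1+\sigma_2n_2$, $\nu_2=n_2$ (resp.\ $\nu_1=\sigma_2n_1+\sigma_3n_2$, $\nu_2=n_1$), and track the residue classes modulo $\sigma_1K$ (resp.\ $\sigma_3K$) to land exactly on the congruences defining $\mathcal B$ and $\mathcal C$, with the $\varepsilon$-weights matching via the extended $\Z^2$-periodicity. The "main obstacle" you flag -- injectivity of $(\b A,\varrho)\mapsto\b B$ -- is immediate from $0\le A_j<K$, just as the paper implicitly uses.
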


\begin{proof}
Using \eqref{Mrep} we obtain
\begin{align*}
&M_2\left(\kappa;(a_1n_1+a_2n_2,bn_2)\sqrt{Kv}\right) q^{-KQ(\b n)} \\
&\hspace{2.5cm}=-\frac{2\sqrt{D}}{\sigma_1} (\sigma_1n_1+\sigma_2n_2)n_2 \int_{-K\overline{\tau}}^{i\infty} \int_{w_1}^{i\infty} \frac{e^{\frac{2\pi i}{\sigma_1}(\sigma_1n_1+\sigma_2n_2)^2 w_1 + \frac{2\pi iD n_2^2}{\sigma_1}w_2}}{\sqrt{-i(w_1+K\tau)}\sqrt{-i(w_2+K\tau)}}dw_2dw_1 \\
&\hspace{2.9cm} -\frac{2\sqrt{D}}{\sigma_3} (\sigma_2n_1+\sigma_3n_2)n_1 \int_{-K\overline{\tau}}^{i\infty} \int_{w_1}^{i\infty} \frac{e^{\frac{2\pi i}{\sigma_3}(\sigma_2n_1+\sigma_3n_2)^2 w_1 + \frac{2\pi iD n_1^2}{\sigma_3}w_2}}{\sqrt{-i(w_1+K\tau)}\sqrt{-i(w_2+K\tau)}}dw_2dw_1.
\end{align*}
This yields
\begin{multline*}
\mathbb E_{\mathcal S,Q,\varepsilon}(\tau) = -\frac{K\sqrt{D}}{\sigma_1} \int_{-\overline{\tau}}^{i\infty} \int_{w_1}^{i\infty} \frac{\theta_1(\b w)}{\sqrt{-i(w_1+\tau)}\sqrt{-i(w_2+\tau)}} dw_2dw_1 \\
- \frac{K \sqrt{D}}{\sigma_3} \int_{-\overline{\tau}}^{i\infty} \int_{w_1}^{i\infty} \frac{\theta_2(\b w)}{\sqrt{-i(w_1+\tau)}\sqrt{-i(w_2+\tau)}} dw_2dw_1,
\end{multline*}
where
\begin{align*}
\theta_1(\b w) &:= \sum_{\b{\alpha}\in\mathcal S} \varepsilon(\b \alpha) \sum_{\b n\in \Z^2 + \b \alpha} (\sigma_1n_1+\sigma_2n_2)n_2 e^{\frac{2\pi iK}{\sigma_1}(\sigma_1n_1+\sigma_2n_2)^2 w_1 + \frac{2\pi iDK}{\sigma_1}n_2^2w_2},\\
\theta_2(\b w) &:=  \sum_{\b \alpha\in \mathcal S} \varepsilon(\b \alpha) \sum_{\b n\in\Z^2+\b \alpha} (\sigma_2n_1+\sigma_3n_2)n_1 e^{\frac{2\pi i K}{\sigma_3}(\sigma_2n_1+\sigma_3n_2)^2w_1+ \frac{2 \pi i DK}{\sigma_3}n_1^2w_2}.
\end{align*}

We now rewrite the $\theta_j$ in terms of the Shimura theta functions.
Letting $\b n\mapsto \frac{\b n}{K}$, we obtain
\begin{align*}
\theta_1(\b w) = \tfrac{1}{K^2} \sum_{\b A\in\mathcal A} \varepsilon\left(\tfrac{\b A}{K}\right) \sum_{\b n\equiv \b A\pmod{K}} (\sigma_1n_1+\sigma_2n_2)n_2 e^{\frac{2\pi i}{\sigma_1 K} (\sigma_1n_1+\sigma_2n_2)^2w_1+\frac{2\pi iD}{\sigma_1K} n_2^2w_2}.
\end{align*}
Set $\nu_1:=\sigma_1n_1+\sigma_2n_2$ and $\nu_2:=n_2$,
so that $n_1 = \frac{\nu_1 - \sigma_2 \nu_2}{\sigma_1}$. Plugging in the restrictions on $\b{n}$ yields
\begin{align*}
&\nu_2 \equiv A_2 + \varrho K \pmod{\sigma_1 K}\quad \text{ for }  0 \leq \varrho \leq \sigma_1 - 1, \\
&\nu_1 = \sigma_1 n_1 + \sigma_2 n_2 \equiv \sigma_1 A_1 + \sigma_2 A_2 + \varrho \sigma_2 K \pmod{\sigma_1 K}.
\end{align*}
This shows that  $\b{\nu} \in \mathcal{B}.$  Furthermore, if $\b \alpha \in \mathcal{A}$, there exists a corresponding $\b{B} \in \mathcal{B}$ such that
\begin{equation*}
\b \alpha = \left(\tfrac{A_1}{K}, \tfrac{A_2}{K}\right)
\equiv \left(\tfrac{B_1 - \sigma_2 B_2}{\sigma_1 K}, \tfrac{B_2}{K}\right) \pmod{1}.
\end{equation*}

Overall, we therefore have
\begin{align*}
\theta_1(\b w)&= \frac{1}{K^2}  \sum_{\b B \in \mathcal{B}}\! \varepsilon \! \left(\tfrac{B_1-\sigma_2 B_2 }{\sigma_1K}, \tfrac{B_2}{K} \right) \!\sum_{\nu_1 \equiv B_1 \pmod{\sigma_1 K}}\!\! \nu_1 e^{\frac{2 \pi i  \nu_1^2 w_1}{\sigma_1 K}}\!\! \sum_{\nu_2 \equiv B_2 \pmod{\sigma_1 K}}\!\! \nu_2 e^{\frac{2 \pi i D \nu_2^2 w_2}{\sigma_1K}}\\
&=\frac{1}{K^2} \sum_{\b B\in \mathcal{B}} \varepsilon \left( \tfrac{B_1-\sigma_2 B_2}{\sigma_1 K}, \tfrac{B_2}{K}\right) \vartheta_1 \left(\sigma_1K, B_1, \sigma_1 K; 2w_1 \right) \vartheta_1 \left(\sigma_1 K, B_2, \sigma_1 K; 2Dw_2\right).
\end{align*}

In the same way, by setting $\nu_1 := \sigma_2 n_1 + \sigma_3 n_2$ and $\nu_2 := n_1$, we can show that
\begin{align*}
\theta_2(\b w) &= \frac{1}{K^2}  \sum_{\b C\in\mathcal{C}} \varepsilon\left(\tfrac{C_2}{K}, \tfrac{C_2-\sigma_2C_1}{\sigma_3 K}\right) \vartheta_1(\sigma_3K,C_1,\sigma_3K;2w_1)\vartheta_1(\sigma_3 K,C_2,\sigma_3K;2Dw_2).
\end{align*}
\vskip-2.75em
\end{proof}
\vskip1.25em

We are now ready to prove Theorem \ref{QMT}.
\begin{proof}[Proof of Theorem \ref{QMT}]

Suppose that $f$ is one of the theta functions from Lemma \ref{thetalemma} and $\gamma\in \Gamma(8 \cdot {\rm lcm}(\sigma_1, \sigma_3) KD)$. Then the transformation \eqref{Shimura1} implies (after a short calculation) that $f \!\! \mid_{\frac32} \! \!  \gamma  = \!  f$. The theorem statement now follows from Lemmas \ref{Fthm} and \ref{thetalemma}, and Theorem \ref{quantheorem}.
\end{proof}

\section{A family with quantum set $\Q$ and unimodular matrices} \label{sec:family}

In this section we construct a family of depth two quantum modular forms with quantum set $\mathbb{Q}$. Let $N_1,N_2 \in 2\N$ and write $L := \gcd(N_1, N_2)$, $N_1 := L R_1, N_2 := L R_2,$ so that $\gcd(R_1,R_2) = 1$. Set $Q(\b{n}) = \sigma_1 n_1^2 + 2 \sigma_2 n_1 n_2 + \sigma_3 n_2^2$. We assume the factorizations $\sigma_1 = R_1 \mu_1 ,$ with $\gcd(R_1, \mu_1) = 1$, and $\sigma_3 = R_2 \mu_3$, with $\gcd(\mu_3,R_2) = 1$. Moreover we assume that $2 \sigma_2 = L R_1 R_2= {\rm lcm}(N_1, N_2)$ and that $\gcd(\mu_1, \mu_3)$ consists of at most one odd prime factor, and always satisfies $\gcd(L, \gcd(\mu_1, \mu_3)) = 1$. If $4 \nmid L$, then we also require that exactly one of $R_1, R_2,\mu_3$ is even.
Set, with $r_1,r_2,s_1,s_2\in\mathbb N$ satisfying $\gcd(r_j, N_j) = \gcd(s_j, N_j) = 1$, $r_j^2\equiv s_j^2\pmod{2N_j}$,
\begin{align}
\label{E:S1S2def}
\mathcal S_1&:=\left\lbrace \left(\tfrac{r_1}{N_1},\tfrac{r_2}{N_2}\right),\left(1-\tfrac{r_1}{N_1},\tfrac{r_2}{N_2}\right),\left(\tfrac{r_1}{N_1},1-\tfrac{r_2}{N_2}\right),\left(1-\tfrac{r_1}{N_1},1-\tfrac{r_2}{N_2}\right)\right.,\\ \notag
&\left.\qquad\, \left(\tfrac{s_1}{N_1},\tfrac{s_2}{N_2}\right),\left(1-\tfrac{s_1}{N_1},\tfrac{s_2}{N_2}\right),\left(\tfrac{s_1}{N_1},1-\tfrac{s_2}{N_2}\right),\left(1-\tfrac{s_1}{N_1},1-\tfrac{s_2}{N_2}\right)\right\rbrace ,\\ \notag
\mathcal S_2&:=\left\lbrace \left(\tfrac{r_1}{N_1},\tfrac{s_2}{N_2}\right),\left(1-\tfrac{r_1}{N_1},\tfrac{s_2}{N_2}\right),\left(\tfrac{r_1}{N_1},1-\tfrac{s_2}{N_2}\right),\left(1-\tfrac{r_1}{N_1},1-\tfrac{s_2}{N_2}\right),\right.\\ \notag
&\left.\qquad\, \left(\tfrac{s_1}{N_1},\tfrac{r_2}{N_2}\right),\left(1-\tfrac{s_1}{N_1},\tfrac{r_2}{N_2}\right),\left(\tfrac{s_1}{N_1},1-\tfrac{r_2}{N_2}\right),\left(1-\tfrac{s_1}{N_1},1-\tfrac{r_2}{N_2}\right)\right\rbrace .
\end{align}

We define
	\begin{equation*}
	\mathcal{Z}_{Q, \b r, \b s}(q):= \sum_{j \in \{1,2\}} (-1)^{j+1} \sum_{\b \alpha \in \mathcal{S}_j}\sum_{\b n \in \N_0^2} q^{L Q(\b n +\b \alpha)}= F_{\mathcal S,Q, \varepsilon}\left( \tfrac{\tau}{R_1R_2} \right),
	\end{equation*}
where  $\mathcal S:= \mathcal S_1 \cup \mathcal S_2$ and $\varepsilon(\b \alpha) :=(-1)^{j+1}$ if $\b \alpha \in \mathcal S_j$.
We  see in the proof of Theorem \ref{main-thm} that the assumptions imply that the asymptotic expansion of $\mathcal{Z}_{Q, \b r, \b s}(q)$ consists of several leading terms with identical Gauss sums that always cancel, and thus the series converges for all $\Q$.
	
	\begin{theorem} \label{main-thm}
		Under the assumption above, the function $\mathcal{Z}_{Q, \b r, \b s}(q)$ is a quantum modular form of depth two, weight one, group $\Gamma \left( 8\cdot \text{\rm lcm}(\sigma_1, \sigma_2) L R_1R_2\right)$, and quantum set $\mathbb{Q}$.
	\end{theorem}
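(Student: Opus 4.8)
The plan is to obtain Theorem \ref{main-thm} from the general Theorem \ref{QMT} applied to $F_{\mathcal S,Q,\varepsilon}$, the only genuinely new input being the enlargement of the quantum set from $\mathcal Q_{\mathcal S,Q,\varepsilon}$ to all of $\mathbb Q$. First I would record the identity $\mathcal Z_{Q,\b r,\b s}(\tau)=F_{\mathcal S,Q,\varepsilon}(\tau/(R_1R_2))$ noted after \eqref{E:S1S2def}, and verify that $\mathcal S=\mathcal S_1\cup\mathcal S_2$ with $\varepsilon\equiv(-1)^{j+1}$ on $\mathcal S_j$ meets the hypotheses of the general construction: the reflection closure under $\b\alpha\mapsto(1,1)-\b\alpha$, $(1-\alpha_1,\alpha_2)$, $(\alpha_1,1-\alpha_2)$ and the corresponding invariance of $\varepsilon$ are built into \eqref{E:S1S2def}, and here $K=\mathrm{lcm}(N_1,N_2)=LR_1R_2=2\sigma_2$. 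Theorem \ref{QMT} then yields depth two, weight one quantum modularity of $F_{\mathcal S,Q,\varepsilon}$, and the scaling $\tau\mapsto\tau/(R_1R_2)$ transfers this to $\mathcal Z_{Q,\b r,\b s}$; re-running the Shimura-transformation computation from the proof of Theorem \ref{QMT} with the specific factorizations $\sigma_1=R_1\mu_1$, $\sigma_3=R_2\mu_3$, $2\sigma_2=LR_1R_2$ and accounting for this scaling produces the stated level $\Gamma(8\cdot\mathrm{lcm}(\sigma_1,\sigma_2)LR_1R_2)$.

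The heart of the argument is to show $\mathcal Q_{\mathcal S,Q,\varepsilon}=\mathbb Q$, i.e.\ by \eqref{quantum-FS} that
\[
\Sigma\left(\tfrac hk\right):=\sum_{\b\alpha\in\mathcal S}\varepsilon(\b\alpha)\sum_{\b\ell\pmod k}e^{2\pi i\frac hk KQ(\b\ell+\b\alpha)}=0
\]
for every reduced $\tfrac hk$. Once this vanishing is in place, Lemma \ref{Fthm} applies at every rational, so the asymptotic expansions \eqref{as} hold on all of $\mathbb Q$, and Lemma \ref{thetalemma} together with Theorem \ref{quantheorem} upgrade the conclusion of Theorem \ref{QMT} to quantum set $\mathbb Q$. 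To evaluate $\Sigma$ I would diagonalize $Q$ as in Lemma \ref{thetalemma}, via $\sigma_1Q(\b n)=(\sigma_1n_1+\sigma_2n_2)^2+Dn_2^2$ (and its $\sigma_3$-companion), so that for fixed $\b\alpha$ the inner sum factors, after completing the square, into one-dimensional quadratic Gauss sums of the type in Proposition \ref{P:G=0}. I would also use the reflection device from the proof of Lemma \ref{Fthm}: the four reflections of a base point contribute $2S_Q(\b\alpha)+2S_{\widetilde Q}(\b\alpha)$, with $\widetilde Q(x_1,x_2)=Q(-x_1,x_2)$, which, as in that proof, leaves each grouped Gauss sum depending on $r_j,s_j$ only through the square classes $r_j^2,s_j^2\pmod{2N_j}$ (note that the choice of modulus $2N_j$ is exactly what makes $r_j^2$ invariant under $r_j\mapsto N_j-r_j$, since $N_j$ is even).

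The cancellation then runs as follows. The congruence $r_j^2\equiv s_j^2\pmod{2N_j}$ from \eqref{E:S1S2def} forces the surviving one-dimensional factor attached to the $j$-th coordinate to be invariant under the exchange $r_j\leftrightarrow s_j$. The set $\mathcal S_1$ pairs the indices diagonally, $(r_1,r_2)$ and $(s_1,s_2)$, while $\mathcal S_2$ pairs them crosswise, $(r_1,s_2)$ and $(s_1,r_2)$, and carries the opposite sign; by the $r_j\leftrightarrow s_j$ invariance the aligned and crossed products of factors coincide, so the $+$ contribution of $\mathcal S_1$ and the $-$ contribution of $\mathcal S_2$ cancel and $\Sigma(\tfrac hk)=0$.

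I expect the main obstacle to be the Gauss-sum bookkeeping in this last paragraph: making precise, in the presence of the cross term $2\sigma_2n_1n_2$, that the factors separate into the two coordinates and depend on the shift only through a square modulo $2N_j$. This is exactly where all the arithmetic hypotheses are consumed — the coprimalities $\gcd(R_1,R_2)=\gcd(R_1,\mu_1)=\gcd(R_2,\mu_3)=1$, the requirement that $\gcd(\mu_1,\mu_3)$ have at most one odd prime factor and satisfy $\gcd(L,\gcd(\mu_1,\mu_3))=1$, and the parity condition when $4\nmid L$ — since through Proposition \ref{P:G=0} and the multiplicativity of Gauss sums they determine precisely which residues keep the one-dimensional sums nonzero and control the relevant square classes. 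I would therefore organize the computation of $\Sigma$ as a reduction to prime-power moduli, treating the odd primes and the power of $2$ dividing $k$ separately against these parameters.
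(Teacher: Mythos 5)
Your overall framing matches the paper's: reduce everything to Theorem \ref{QMT} via $\mathcal Z_{Q,\b r,\b s}(q)=F_{\mathcal S,Q,\varepsilon}\left(\frac{\tau}{R_1R_2}\right)$, so that the whole content is the vanishing of the signed Gauss sum \eqref{E:ellsum} for every reduced $\frac hk$; the degenerate moduli are handled by Proposition \ref{P:G=0} after reduction to prime powers, and the cancellation must ultimately come from the $8$-versus-$8$ sign structure of $\mathcal S_1$ and $\mathcal S_2$. The gap is in your mechanism for the crucial non-degenerate case. You want the two-dimensional sum $S(\b\alpha):=\sum_{\b\ell\pmod k}e^{2\pi i\frac hk LQ(\b\ell+\b\alpha)}$ to factor into a piece depending only on $\alpha_1$ times a piece depending only on $\alpha_2$, each invariant under $r_j\leftrightarrow s_j$ because of $r_j^2\equiv s_j^2\pmod{2N_j}$. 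This does not hold as stated. First, the substitution $(\nu_1,\nu_2)=(\sigma_1\ell_1+\sigma_2\ell_2,\ell_2)$ underlying $\sigma_1Q(\b n)=(\sigma_1n_1+\sigma_2n_2)^2+Dn_2^2$ has determinant $\sigma_1=R_1\mu_1$, and nothing in the hypotheses forces $\gcd(\mu_1,k)=1$, so it is not a bijection of $(\Z/k\Z)^2$ in general; Lemma \ref{thetalemma} performs this substitution over $\Z$ (for theta functions), where no invertibility is needed, and that does not transfer to sums modulo $k$. Second, even where the substitution is legitimate, the shift of the first diagonalized factor is $\sigma_1\alpha_1+\sigma_2\alpha_2=\frac{\mu_1a_1}{L}+\frac{R_1a_2}{2}$ (with $a_j:=N_j\alpha_j$), which mixes the two coordinates, and the $\b\ell$-independent phase contains $\frac hk a_1a_2$ (coming from $2L\sigma_2\alpha_1\alpha_2=a_1a_2$), which couples $a_1$ and $a_2$; your ``aligned equals crossed'' identity would then require divisibility statements like $k\mid(r_1-s_1)(r_2-s_2)$ that are simply not available. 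You yourself flag this bookkeeping as the main obstacle, but it is the entire content of the theorem, so the proposal does not constitute a proof.

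The paper's resolution is genuinely different and worth contrasting with your plan. In the non-degenerate case ($\gcd(LR_1R_2,k_1)=1$ and $g=\gcd(k,L)\in\{1,2\}$, with $k=gk_1$), it proves the stronger statement that \emph{all sixteen} sums $S(\b\alpha)$, $\b\alpha\in\mathcal S$, are equal, so the signed sum is $(8-8)$ times a common value. This is done not by factoring, but by shifting $\b\ell$ to the common reference point $\left(\overline{N_1}a_1,\overline{N_2}a_2\right)$ (inverses modulo $k_1$), using the bilinear identity \eqref{E:QB} to compress the discrepancy into the single constant $X=R_2\mu_1a_1^2+LR_1R_2a_1a_2+R_1\mu_3a_2^2-N_1N_2Q\left(\overline{N_1}a_1,\overline{N_2}a_2\right)$, and then checking that $X$ is independent of $\b\alpha$ modulo every odd prime power exactly dividing $LR_1R_2$ and modulo $2^\lambda g$; this is exactly where $r_j^2\equiv s_j^2\pmod{2N_j}$ and the coprimality and parity hypotheses are consumed. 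Note in particular how the troublesome coupling term is neutralized there: inside $X$ it appears as $LR_1R_2a_1a_2$, which dies modulo each prime power dividing $LR_1R_2$ --- your raw coupling phase $\frac hk a_1a_2$ has no analogous reason to disappear. If you want to complete your write-up, the shift-and-compare argument should replace the factorization step; your treatment of the degenerate moduli and of the level (which the paper's proof does not even discuss) can stay as is.
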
	
\begin{proof}
	Note that the conditions of Theorem \ref{QMT} are satisfied. We are left to show that we have quantum set $\mathbb{Q}$, which follows if we show that
		\begin{equation}
		\label{E:ellsum}
		\sum_{j\in\{1,2\}}(-1)^{j+1}\sum_{\b{\alpha}\in\mathcal S_j} \sum_{\b{\ell} \pmod{k}} e^{2 \pi i \frac{h}{k} L Q\left(\b{\ell} + \b{\alpha}\right)}=0.
		\end{equation}
		Write $L = 2^\Lambda L_1, k = g k_1$, where $L_1, k_1$ are odd and where $g := \gcd(k,L)$. We claim that the sum on $\b{\ell}$ vanishes unless $\gcd(LR_1R_2,k_1)=1$ and $g\in\{1,2\}$. For this we first consider the (one-dimensional) Gauss sum in $\ell_1$, which is $(a_j:=N_j \alpha_j)$
		\begin{equation}\label{E:ell1Gauss}
			\sum_{\ell_1\pmod{k}} e^{2\pi i\frac{h}{k}\left(LR_1\mu_1\ell_1^2 + \left(2\mu_1a_1+L^2R_1R_2\ell_2+LR_1a_2\right)\ell_1\right)}.
		\end{equation}
	The linear term reduces to $2 \mu_1 a_1 \pmod{R_1}$, and $\mu_1 a_1$ is coprime to $R_1$ by assumption. Thus by Proposition \ref{P:G=0} the expression in \eqref{E:ell1Gauss} is zero if $\gcd(R_1, k_1) > 1$. Similarly, the linear term reduces to $2 \mu_1 a_1 \pmod{L}$. The Gauss sum \eqref{E:ell1Gauss} vanishes if $g > 1$ and $g \nmid 2 \mu_1$. Now write an alternative Gauss sum by grouping the $\ell_2$ terms in \eqref{E:ellsum}, obtaining an analogous version of \eqref{E:ell1Gauss}. As before, this immediately shows that \eqref{E:ellsum} is zero if $\gcd(R_2, k_1) > 1$, and also vanishes if $g > 1$ and $g \nmid 2 \mu_3$. If $g > 1$, then the only way the sum fails to vanish is if $g \mid \gcd (2 \mu_1, 2 \mu_3),$ which implies that $g = 2$ by assumption. This shows that \eqref{E:ellsum} vanishes if $4 \mid L$.

Next, assuming $g = 2$, $4 \nmid L$, and $4 \mid k$, we also show that \eqref{E:ellsum} vanishes in this case.  Recalling the corresponding assumptions on the $R_j$ and $\mu_j$, one possibility is that $2|R_1$ and $2\nmid R_2\mu_1\mu_2$ (or the analogous condition with $\ell_1$ and $\ell_2$ swapped if necessary). Then $4$ divides the factor in front of $\ell_1^2$ in \eqref{E:ell1Gauss}, and the linear term is congruent to $2$ modulo $4$ since $a_1$ is odd. The sum therefore vanishes by Proposition \ref{P:G=0}. Otherwise the condition on $R_j$ and $\mu_j$ is that $2\nmid R_1R_2\mu_1, 2 \mid \mu_3$, and we again consider the analog of \eqref{E:ell1Gauss} for the sum in $\ell_2$. Now $4$ divides the coefficient in front of $\ell_2^2$, and the linear term is congruent to $2\pmod{4}$ so Proposition \ref{P:G=0} again applies.

		We next assume that $\gcd(L R_1 R_2, k_1) = 1$, and $g \in\{1,2\}$ and prove that the sum on $\b{\ell}$ in \eqref{E:ellsum} is the same for all choices of $\b{\alpha}$. We note that the multiplicative inverses $\overline{N_j} \pmod{k_1}$ exist. Using \eqref{E:QB}, we write
		\begin{multline}
		\label{E:Qshift}
		\frac{h}{k} L \left(Q\left(\b{\ell} + \b \alpha \right) - Q\left(\b{\ell} + \left(\overline{N_1}a_1, \overline{N_2}a_2\right)\right)\right) \\
		= \frac{hL}{k} \left(Q(\b{\alpha}) - Q\left(\overline{N_1}a_1, \overline{N_2}a_2\right) + B\left(\b{\ell}, \b{\alpha}\right) - B\left(\b{\ell}, \left(\overline{N_1}a_1, \overline{N_2}a_2\right)\right)\right).
		\end{multline}
		Since $B(\b{\ell}, \b{\alpha}) - B(\b{\ell},(\overline{N_1}a_1, \overline{N_2}a_2)) \equiv 0 \pmod{k_1}$  by construction, \eqref{E:Qshift} implies that
		\begin{equation*}
		\frac{hL}{k_1} Q\left(\b{\ell} + \b{\alpha} \right)
		\equiv
		\frac{hL}{k_1} \bigg(Q\left(\b{\ell} + \left(\overline{N_1}a_1, \overline{N_2}a_2\right)\right)
		+ Q(\b{\alpha}) - Q\left(\overline{N_1}a_1, \overline{N_2}a_2\right) \bigg) \pmod{1}.
		\end{equation*}
		We now calculate
		\begin{equation*}
		\frac{hL}{k_1} \left(Q(\b{\alpha}) - Q\left(\overline{N_1}a_1, \overline{N_2}a_2\right) \right)
		= \frac{h}{k L R_1 R_2} X,
		\end{equation*}
		where $X:=R_2 \mu_1 a_1^2 + L R_1 R_2 a_1 a_2 + R_1 \mu_3 a_2^2 - N_1 N_2Q(\overline{N_1}a_1, \overline{N_2}a_2)$.

If $p$ is an odd prime such that $p^{\lambda}$ exactly divides $L R_1 R_2$, then the assumptions on the parameters easily imply that
\begin{equation*}
X \equiv R_2 \mu_1 a_1^2 + R_1 \mu_3 a_2^2 \pmod{p^{\lambda}}
\end{equation*}
is independent from $\b \alpha$.

Finally, suppose that $2^\lambda$ exactly divides $L R_1 R_2$. Then the final congruence is
		\begin{equation*}
		X \equiv R_2 \mu_1 a_1^2 + R_1 R_2 + R_1 \mu_3 a_2^2 \pmod{2^{\lambda} g},
		\end{equation*}
which is independent from $\b \alpha$ due to the assumption that $r_j^2 \equiv s_j^2 \pmod{2^{\lambda+1}}.$	
		
		Therefore the sum on $\b\ell$ in \eqref{E:ellsum} equals
		\begin{equation*}
		e^{2\pi i \frac{hX}{kL R_1 R_2}} \sum_{\b{\ell} \pmod{k}} e^{2 \pi i \frac{h}{k} L Q\left(\b{\ell} + \left(\overline{N_1}a_1,\, \overline{N_2}a_2\right)\right)}
		= e^{2\pi i \frac{hX}{L R_1 R_2}} \sum_{\b{\ell} \pmod{k}} e^{2 \pi i \frac{h}{k} L Q\left(\b{\ell}\right)}
		\end{equation*}
		by shifting $\b{\ell}$; this overall expression is now clearly independent from choice of $\b{\alpha}$.\qedhere
\end{proof}

\section{Classification of positive unimodular ${\tt H}$-matrices and the proofs of Theorem \ref{39} and Theorem \ref{T:mainquantum}}\label{sec:class}

\subsection{Proof of Theorem \ref{39}}
Let
\begin{equation}
\label{E:Mb}
M=M(b_1,b_2,b_3,b_4,b_5,b_6):=\left(
\begin{smallmatrix}
b_1 & 0 & -1 & 0 & 0 & 0 \\
0 & b_2 & -1 & 0 & 0 & 0 \\
-1 & -1 & b_3 & -1 & 0 & 0 \\
0 & 0 & -1 & b_4 & -1 & -1 \\
0 & 0 & 0 & -1 & b_5 & 0 \\
0 & 0 & 0 & -1 & 0 & b_6 \\
\end{smallmatrix}
\right).
\end{equation}
In this section, we classify all positive, unimodular (PU) matrices $M$ with the additional property that $b_j\geq 2$ ($j\in\{1,2,5,6\}$).
The determinant of $M$ can be written as follows:
\begin{align*}
D&=D(b_1,b_2,b_3,b_4,b_5,b_6) :=\det(M)\\
&= b_1b_2b_3b_4b_5b_6 - b_1b_2b_3b_5 - b_1b_2b_3b_6 - b_1b_2b_5b_6 -b_1b_4b_5b_6 - b_2b_4b_5b_6 + (b_1+b_2)(b_5+b_6) \\
& = b_1b_2b_5b_6\left(\left(b_3 - \tfrac{1}{b_1} - \tfrac{1}{b_2}\right)\left(b_4 - \tfrac{1}{b_5} - \tfrac{1}{b_6}\right) - 1\right).
\end{align*}
The goal of this section is to show the following.

\begin{proposition}
\label{P:Mfin}
If $M(b_1,b_2,b_3,b_4,b_5,b_6)$ is a {\rm PU} matrix with $b_j\geq 2$ ($j\in\{1,2,5,6\}$), then (up to equivalence)
\begin{equation*}
b_1 \leq 23, \; b_2 \leq 133, \; 2 \leq b_3 \leq 7, \; b_4 = 1, \; b_5 \leq 13, \; b_6 \leq 97.
\end{equation*}
In particular, there are finitely many {\rm PU} matrices $M$.
\end{proposition}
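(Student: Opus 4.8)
The plan is to convert positive definiteness and unimodularity into two clean scalar conditions on
$P := b_3 - \tfrac{1}{b_1} - \tfrac{1}{b_2}$ and $R := b_4 - \tfrac{1}{b_5} - \tfrac{1}{b_6}$,
and then to bound the $b_j$ one at a time. First I would run Gaussian elimination (equivalently, apply Sylvester's criterion) to $M$, eliminating the four leaves $1,2,5,6$ and then node $3$. Eliminating a leaf $j\in\{1,2\}$ (resp. $\{5,6\}$) subtracts $1/b_j$ from the diagonal entry of node $3$ (resp. node $4$), leaving the $2\times 2$ Schur complement $\left(\begin{smallmatrix} P & -1\\ -1 & R\end{smallmatrix}\right)$ on nodes $3,4$. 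Since the leaf pivots $b_1,b_2,b_5,b_6$ are positive, $M$ is positive definite if and only if $P>0$ and $PR>1$. The given factorization $\det(M)=b_1b_2b_5b_6(PR-1)$ together with unimodularity (so $\det(M)=1$, as $M$ is positive definite) then yields the exact identity
\[
PR = 1 + \varepsilon, \qquad \varepsilon := \tfrac{1}{b_1b_2b_5b_6} \le \tfrac{1}{16},
\]
the last inequality because $b_1,b_2,b_5,b_6\ge 2$. This identity is the engine of the whole argument.

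Next I would show that $b_4=1$ and locate $b_3$. If both $b_3,b_4\ge 2$, then (using $b_j\ge 2$ on the leaves) $P,R\ge 1$; writing $P=1+p$ and $R=1+r$ with $p,r\ge 0$ gives $p+r+pr=\varepsilon\le \tfrac{1}{16}$. A short check shows that any nonzero value of $p$ is at least $\tfrac16$ (the largest value of $\tfrac1{b_1}+\tfrac1{b_2}$ strictly below $1$ is $\tfrac56$, at $\{b_1,b_2\}=\{2,3\}$), and similarly for $r$; hence $p=r=0$, forcing $b_1=b_2=b_5=b_6=b_3=b_4=2$ and $\varepsilon=\tfrac1{16}\neq 0$, a contradiction. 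Thus $\min(b_3,b_4)=1$, and the graph automorphism interchanging the two halves (which sends $(b_1,b_2,b_3,b_4,b_5,b_6)$ to $(b_5,b_6,b_4,b_3,b_1,b_2)$) lets me assume $b_4=1$. Then $R=1-\tfrac1{b_5}-\tfrac1{b_6}<1$, so $P=(1+\varepsilon)/R>1$ forces $b_3\ge 2$; and since the smallest positive value of $1-\tfrac1{b_5}-\tfrac1{b_6}$ is $\tfrac16$, we get $R\ge \tfrac16$ and hence $b_3-1\le P\le 6(1+\varepsilon)<7$, giving $b_3\le 7$.

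It remains to bound the four leaves, where (by the leaf-swap automorphisms) I may assume $b_1\le b_2$ and $b_5\le b_6$. The minimum of $P$ over all admissible data is $\tfrac76$ (again at $b_3=2$, $\{b_1,b_2\}=\{2,3\}$), so $R\le \tfrac{6}{7}(1+\varepsilon)$ and $\tfrac1{b_5}+\tfrac1{b_6}=1-R\ge \tfrac{1-6\varepsilon}{7}$; combined with $b_5\le b_6$ (so that $\tfrac1{b_5}\ge \tfrac12(\tfrac1{b_5}+\tfrac1{b_6})$) and the fact that $\varepsilon$ is negligibly small precisely when $b_5$ is large, this bounds $b_5$. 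With $b_3$ and $b_5$ now bounded, I would clear denominators in $PR=1+\varepsilon$ to obtain the Diophantine identity
\[
(b_1b_2b_3-b_1-b_2)(b_5b_6-b_5-b_6)=b_1b_2b_5b_6+1,
\]
solve it rationally for $b_6$ in terms of $b_1,b_2,b_3,b_5$, and use positivity of the denominator together with integrality to bound $b_6$. Finally $b_1$ is controlled by $b_1\le 2/(b_3-P)$ with $b_3-P=\tfrac1{b_1}+\tfrac1{b_2}$ pinned down by the ranges already obtained for $b_3,b_5,b_6$, and then $b_2$ follows from $\tfrac1{b_2}=(b_3-P)-\tfrac1{b_1}$. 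Once every $b_j$ is bounded, only finitely many tuples survive, which is the finiteness claim.

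I expect the main obstacle to be extracting the sharp constants $(23,133,13,97)$ rather than the conceptual reduction. The inequalities above give bounds of the correct order of magnitude, but pinning the exact values requires simultaneously exploiting the positive-definiteness inequalities, the exact unimodularity equation, and integrality, and will force a division into cases (by the value of $b_3\in\{2,\dots,7\}$, and by whether the smaller leaves equal $2$). The regime where $b_3=2$ and one leaf equals $2$ is where $R$ is closest to $1$ and the leaves are largest, and is the delicate case producing the extreme bounds.
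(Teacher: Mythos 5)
Your skeleton is essentially the paper's: the paper also starts from the factorization $\det(M)=b_1b_2b_5b_6\bigl(\bigl(b_3-\tfrac1{b_1}-\tfrac1{b_2}\bigr)\bigl(b_4-\tfrac1{b_5}-\tfrac1{b_6}\bigr)-1\bigr)$, rules out $b_3,b_4\geq 2$ and $b_3=b_4=1$, normalizes $b_4=1$ up to the half-swap automorphism, bounds $b_3\leq 7$, and then extracts the remaining bounds by a case analysis over $b_5$ and $b_3$. Your repackaging is genuinely cleaner in two respects: the Schur-complement characterization (positive definiteness $\iff P>0$ and $PR>1$) replaces the paper's ad hoc sign arguments, and the exact identity $PR=1+\varepsilon$ with $\varepsilon=\tfrac{1}{b_1b_2b_5b_6}$, cleared of denominators into $(b_1b_2b_3-b_1-b_2)(b_5b_6-b_5-b_6)=b_1b_2b_5b_6+1$, is the right tool for the sharp constants. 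Indeed, for $b_3=2$, $b_5=3$ it solves to $b_6=\frac{3(2b_1b_2-b_1-b_2)+1}{(b_1-2)(b_2-2)-4}$, and the minimal denominator $(b_1-2)(b_2-2)-4=1$ at $(b_1,b_2)=(3,7)$ gives exactly the extremal value $b_6=97$ (matrix 11 in the appendix), which the paper instead reaches through a monotonicity argument.

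There are, however, two concrete shortfalls. First, you never observe that $\gcd(b_1,b_2)$ and $\gcd(b_5,b_6)$ divide every term of $\det(M)=1$, so each pair is coprime and in particular $b_1\neq b_2$, $b_5\neq b_6$; the paper uses this at the outset to assume the strict orderings $b_1<b_2$, $b_5<b_6$. This is not cosmetic: your $b_5$ step, as written, only yields $b_5\leq 14/(1-6\varepsilon)$ and hence $b_5\leq 14$, not the claimed $13$, because it allows $b_5=b_6=14$. With coprimality one has $b_6\geq 15$ whenever $b_5=14$, and then $\varepsilon\leq\tfrac{1}{4\cdot 14\cdot 15}$ forces a contradiction, recovering $b_5\leq 13$ exactly as in the paper; excluding $b_5=b_6=14$ without coprimality requires a separate integrality argument you do not supply. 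Second, the stated bounds $b_1\leq 23$, $b_2\leq 133$, $b_6\leq 97$ are never actually derived: you correctly predict they come from a case division over $b_3\in\{2,\dots,7\}$ and the value of $b_5$, but that case analysis \emph{is} the bulk of the paper's proof (carried out there with Maple assistance). So as it stands your proposal establishes $b_4=1$, $2\leq b_3\leq 7$, and finiteness, but not the specific numerical bounds in the statement.
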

This then enables us to prove Theorem \ref{39}.
\begin{proof}[Proof of Theorem \ref{39}]
Proposition \ref{P:Mfin} together with a computer search quickly shows there are $312$ PU matrices. Since the group of automorphisms of an ${\tt H}$-graph is $\mathbb{Z}_2 \times \mathbb{Z}_2 \times \mathbb{Z}_2$, we have $39$ equivalence classes of such matrices; these are listed in the appendix. This gives the claim.	
\end{proof}

We now prove the main statement of this section, namely Proposition  \ref{P:Mfin}.
\begin{proof} [Proof of Proposition \ref{P:Mfin}]
It is clear that $\gcd(b_1, b_2) \mid D$, thus $b_1$ and $b_2$  must be coprime and without loss of generality we may assume $b_1 < b_2$ and $b_5 < b_6$. This further implies that $b_1 b_2, b_5 b_6 \geq 6$, and $\frac{1}{b_1} + \frac{1}{b_2} \leq \frac56$. Since $b_3, b_4 \geq 2$ we therefore have
\begin{equation*}
D \geq b_1 b_2 b_5 b_6 \left(\tfrac76 \cdot \tfrac76 - 1\right)
= \tfrac{13}{36} b_1 b_2 b_5 b_6 > 1
\end{equation*}
thus $M$ is not unimodular. Furthermore, the fact that $1 - \frac{1}{b_1} - \frac{1}{b_2} < 1$ immediately shows that if $b_3 = b_4 = 1$, then $\det (M) < 0$. Thus without loss of generality we assume that $b_4 = 1$ and $b_3\neq 1$. If $b_3 \geq 8$, then
\begin{equation*}
D > b_1 b_2 b_5 b_6 \left((b_3 - 1)  \tfrac16 - 1\right)
\geq b_1 b_2 b_5 b_6  \frac16 > 1.
\end{equation*}
Thus we must have $b_3 \leq 7$.

Now suppose that $b_5 \geq 14$. Then, since $b_6>b_5$,
\begin{align*}
D\geq
b_1 b_2 b_5 b_6 \left(\tfrac76  \left(1 - \tfrac{1}{14} - \tfrac{1}{15}\right) - 1\right) = 2 \cdot 3 \cdot 14 \cdot 15 \tfrac{1}{180} > 1.
\end{align*}
Thus we must have $b_5 \leq 13$.

The remaining bounds require a case by case analysis based on the values of $b_5$. If $b_5 = 2$, then for $b_2=2$, $D \leq 0$, thus we must have $b_3 \geq 3$. If $b_6 \geq 28$, then
\begin{equation*}
D\geq 6 \cdot 2 \cdot 27 \left( \left(3-\tfrac12-\tfrac13\right) \left(1 - \tfrac12 - \tfrac{1}{28}\right) - 1\right)
\geq  2.
\end{equation*}
We therefore conclude that $b_6 \leq 27$. However, in order to have $D$ positive we also need
$$
3 \left(\tfrac12 - \tfrac{1}{b_6}\right) > 1,
$$
which implies that $b_6 \geq 7$.

We next determine the possible values of $b_1$. In order to have $D = 1$, it must be true that $D > 0$, thus
\begin{align}
\label{E:b5=2}
\quad b_3 - \tfrac{1}{b_1} - \tfrac{1}{b_2}  > \left(\tfrac12 - \tfrac{1}{b_6}\right)^{-1}.
\end{align}
Now suppose that $3 \leq b_3 \leq 7$ and $14 \leq b_6 \leq 27$ are fixed.  Now suppose that $b_1 \geq 11$. Then
$$
D \geq 11 \cdot 12 \cdot 2 \cdot 7 \left(\left(3 - \tfrac{1}{11} - \tfrac{1}{12}\right)\left(\tfrac12 - \tfrac17\right) - 1\right) = 17,
$$
so we must have $b_1 \leq 10.$

In this case a Maple calculation shows that the right-side is at most $5$ (which occurs for $b_3 = 3$ and $b_6 = 7$), and thus all $b_1 > 10$ are not possible; in other words, we must have $b_1 \leq 10$. To complete this case, we now consider fixed $2 \leq b_1 \leq 10, 3 \leq b_3 \leq 7,$ and $3 \leq b_6 \leq 27$. If there is a solution, then following \eqref{E:b5=2}, it must be for the minimal value of $b_2$ such that
\begin{equation}
\label{E:b2range}
b_2 > - \left(\left(\tfrac12 - \tfrac{1}{b_6}\right)^{-1} - b_3 + \tfrac{1}{b_1}\right)^{-1}.
\end{equation}
A Maple search shows that the maximum value of the right-side is 30 (which occurs with $b_1 = 6, b_3 = 3$ and $b_6 = 7$), $b_2 \leq 31$.

Next, let $b_5 = 3$. If $b_3 \geq 4$, then
\begin{equation*}
D \geq 6 \cdot 3 \cdot 4 \left(\left(4-\tfrac12-\tfrac13\right) \left(1 - \tfrac13 - \tfrac14\right) - 1\right)
= 23.
\end{equation*}
Thus $b_3 \leq 3$, and we begin with $b_3 = 3$. Very similar calculations show, in turn, that $b_6 \leq 5$, and $b_1 \leq 3$. As in \eqref{E:b2range}, checking
\begin{equation*}
b_2 > - \left(\left(\tfrac23 - \tfrac{1}{b_6}\right)^{-1} - b_3 + \tfrac{1}{b_1}\right)^{-1}.
\end{equation*}
in these ranges now gives a maximum right-side value of $10$ (with $b_1 = 2, b_3 = 3$, and $b_6 = 4$), then $b_2 \leq 11$.

For the case $b_5 =3$ and $b_3 = 2$, if $b_6 \leq 6$, then
\begin{equation*}
D = b_1 b_2 b_5 b_6 \left(\left(2 - \tfrac{1}{b_1} - \tfrac{1}{b_2}\right)\left(1-\tfrac13 - \tfrac{1}{b_6}\right) - 1\right)
< b_1 b_2 b_5 b_6 \left(2 \cdot \tfrac12 - 1 \right) = 0,
\end{equation*}
and thus we must have $b_6 \geq 7$. However, in order for $D > 0$, it also must be true that
\begin{align}
\label{E:b5=3b6bound}
2 - \tfrac{1}{b_1} - \tfrac{1}{b_2} > \left(\tfrac23 - \tfrac{1}{b_6}\right)^{-1} > \tfrac32.
\end{align}
The largest values of $b_6$ occurs when the left side is as close to $\frac32$ as possible (while being larger, so $b_1 \geq 3$), which occurs for $b_1 = 3$ and $b_2 = 7$ (and then $2 - \frac13 - \frac17 = \frac{32}{21}$).  Plugging in to \eqref{E:b5=3b6bound}, this implies that the first inequality holds for $b_6 > 96$, and again by monotonicity, this gives the bound $b_6 \leq 97$.

Furthermore, if $b_1 \geq 24$, then
\begin{equation*}
D \geq 24 \cdot 25 \cdot 3 \cdot 7 \left(\left(2 - \tfrac{1}{24} - \tfrac{1}{25}\right)\left(1-\tfrac13 - \tfrac17\right) - 1\right)
= 61,
\end{equation*}
thus we must have $b_1 \leq 23$. Finally, checking
\begin{equation*}
b_2 > - \left(\left(\tfrac23 - \tfrac{1}{b_6}\right)^{-1} - 2 + \tfrac{1}{b_1}\right)^{-1}
\end{equation*}
over the ranges $3 \leq b_1 \leq 23$, and $7 \leq b_6 \leq 97$ shows that the right-side is at most 132 (which occurs at $b_1 = 12$ and $b_6 = 7$), so $b_2 \leq 133$.

For the remaining values $4 \leq b_5 \leq 13$, we proceed similarly. First, if $b_3 \geq 3$, then
\begin{equation*}
D \geq 2 \cdot 3 \cdot 4 \cdot 5 \left(\tfrac{13}{6} \left(1 - \tfrac14 - \tfrac15\right) - 1\right) =23,
\end{equation*}
thus we must have $b_3 = 2$. Furthermore, if $b_1 \geq 11$, then
\begin{align*}
D \geq 11 \cdot 12 \cdot 4 \cdot 5 \left(\left(2 - \tfrac{1}{11} - \tfrac{1}{12}\right) \cdot \tfrac{11}{20} - 1\right) = 11,
\end{align*}
thus $b_1 \leq 10$.

Now we bound $b_6$ as in the previous case. For example, if $b_5 = 4$, then $D > 0$ requires that
\begin{equation*}
2 - \tfrac{1}{b_1} - \tfrac{1}{b_2} > \left(\tfrac34 - \tfrac{1}{b_6}\right)^{-1} > \tfrac43.
\end{equation*}
This is only possible if $\frac{1}{b_1} + \frac{1}{b_2} < \frac23$, and the largest value of $b_6$ occurs when the sum is as close as possible to $\frac23$. This occurs with $b_1 = 2, b_2 = 7$, which implies that $b_6 \leq 77$. Repeating the argument for $b_5 \geq 5$ never gives a larger range for $b_6$ (and $b_5 \geq 8$ can be treated as a single case, since then the maximal case is always $\frac12 + \frac13 < \frac{b_5 - 2}{b_5 - 1}$). Finally, plugging in $b_1 \leq 10, 4 \leq b_5 \leq 13$, and $b_6 \leq 77$ to
\begin{equation*}
b_2 > - \left(\left(1 - \tfrac{1}{b_5} - \tfrac{1}{b_6}\right)^{-1} - 2 + \tfrac{1}{b_1}\right)^{-1}
\end{equation*}
gives the bound $b_2 \leq 71$.
\end{proof}

\subsection{Calculation of $Z(q)$ and the proof of Theorem \ref{T:mainquantum}.}

Let $M$ be as in \eqref{E:Mb}, with inverse matrix $M^{-1} = (\ell_{jk})_{1 \leq j,k \leq 6}$. We need the central $2 \times 2$ sub-matrix of $M^{-1}$, which we write as
$$
A :=  \begin{pmatrix} \ell_{33} & \ell_{34} \\ \ell_{43} & \ell_{44} \end{pmatrix}=\left(
\begin{array}{cc}
 {b_1} {b_2} ({b_4} {b_5} {b_6}-b_5-{b_6}) &  {b_1}
   {b_2} {b_5} {b_6} \\
  {b_1} {b_2} {b_5} {b_6} & \frac{{b_5} {b_6} ({b_1}
   {b_2} {b_5} {b_6}+1)}{{b_4} {b_5} {b_6}-b_5-{b_6}} \\
\end{array}
\right).
$$
In order to write $Z(q)$ as a double series of the type found in Section \ref{sec:family}, we use a linear algebra identity, which can be verified by a Maple computation.
\begin{lemma}
\label{L:M=A+c}
If $\b{r} = (\varepsilon_1, \varepsilon_2, 2n_1 +1, 2n_2 + 1, \varepsilon_5, \varepsilon_6)^T$ with $n_1,n_2 \in \Z$ and $\varepsilon_j \in \{\pm1\}$, then
$$
\frac12 \b{r}^T M^{-1} \b{r} = \frac12 \begin{pmatrix} 2n_1 + 2 \alpha_1,\, 2n_2 + 2 \alpha_2 \end{pmatrix} A \begin{pmatrix} 2n_1 + 2 \alpha_1 \\ 2n_2 + 2 \alpha_2 \end{pmatrix} + c ,
$$
where
$$
\b{\alpha} = \b{\alpha}(\varepsilon) = \begin{pmatrix} \alpha_1 \\ \alpha_2 \end{pmatrix}
:= \frac12 \begin{pmatrix} 1 + \frac{\varepsilon_1}{b_1} + \frac{\varepsilon_2}{b_2} \\ 1 + \frac{\varepsilon_5}{b_5} + \frac{\varepsilon_6}{b_6}\end{pmatrix},\qquad
c := \tfrac12\left(\tfrac{1}{b_1} + \tfrac{1}{b_2} + \tfrac{1}{b_5} + \tfrac{1}{b_6}\right). $$
\end{lemma}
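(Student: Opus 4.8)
The plan is to avoid the direct symbolic inversion of the $6\times 6$ matrix and instead exploit the star/tree structure of $M$ through a Schur complement (equivalently, completion of the square). First I would split the index set into the four leaf nodes $L=\{1,2,5,6\}$ and the two central nodes $C=\{3,4\}$, and correspondingly write $M$ in block form with blocks $M_{LL}$, $M_{LC}=M_{CL}^T$, and $M_{CC}$. The crucial structural observation is that no two leaves are adjacent, so $M_{LL}=\mathrm{diag}(b_1,b_2,b_5,b_6)$ is diagonal and trivially invertible, while $M_{LC}$ merely records that leaves $1,2$ attach to node $3$ and leaves $5,6$ attach to node $4$.

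Next I would invoke the block-inverse identity in its ``completed square'' form: writing $\b{r}=(\b{r}_L,\b{r}_C)$ and $A:=(M_{CC}-M_{CL}M_{LL}^{-1}M_{LC})^{-1}$, one has $A=(M^{-1})_{CC}$ and
\[
\b{r}^T M^{-1}\b{r}=\b{r}_L^T M_{LL}^{-1}\b{r}_L+(\b{r}_C-\b{u})^T A\,(\b{r}_C-\b{u}),\qquad \b{u}:=M_{CL}M_{LL}^{-1}\b{r}_L.
\]
The fact that this $A$ coincides with the displayed central block is a one-line consequence of the Schur complement formula; as an internal consistency check, $\det M=\det(M_{LL})\det(A^{-1})=b_1b_2b_5b_6\det(A^{-1})$ reproduces exactly the factored determinant recorded above, and in the unimodular case $D=1$ this collapses to the explicit entries of $A$.

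Then I would simply substitute $\b{r}_L=(\varepsilon_1,\varepsilon_2,\varepsilon_5,\varepsilon_6)^T$ and $\b{r}_C=(2n_1+1,2n_2+1)^T$. Since $\varepsilon_j^2=1$, the leaf term is $\b{r}_L^T M_{LL}^{-1}\b{r}_L=\tfrac{1}{b_1}+\tfrac{1}{b_2}+\tfrac{1}{b_5}+\tfrac{1}{b_6}$, whose half is precisely $c$. A direct computation of $\b{u}=M_{CL}M_{LL}^{-1}\b{r}_L$ gives $\b{u}=(-\tfrac{\varepsilon_1}{b_1}-\tfrac{\varepsilon_2}{b_2},\,-\tfrac{\varepsilon_5}{b_5}-\tfrac{\varepsilon_6}{b_6})^T$, so that $\b{r}_C-\b{u}=(2n_1+2\alpha_1,\,2n_2+2\alpha_2)^T$ with $\alpha_1,\alpha_2$ exactly as defined in the statement. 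Halving the displayed identity then yields the claim for all admissible $b_j$ simultaneously.

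Since every step is forced by the block structure, I do not anticipate a genuine obstacle; the only point requiring care is the bookkeeping that makes the two cancellations transparent, namely that the leaf block contributes exactly the constant $c$ (using $\varepsilon_j^2=1$) and that the off-diagonal coupling $\b{u}$ shifts $\b{r}_C$ by precisely $2\b{\alpha}$ rather than some unrelated quantity. This conceptual route has the advantage of proving the identity uniformly in the parameters, thereby bypassing the case-free Maple verification while also explaining \emph{why} the shift $\b{\alpha}$ and the constant $c$ take the stated forms.
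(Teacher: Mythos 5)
Your proof is correct, and it takes a genuinely different route from the paper's: the paper disposes of this lemma in one sentence, calling it ``a linear algebra identity, which can be verified by a Maple computation,'' i.e.\ a black-box symbolic check of $M^{-1}$. You replace that by the completion-of-squares form of block inversion. All the ingredients check out: with $L=\{1,2,5,6\}$ and $C=\{3,4\}$ the block $M_{LL}={\rm diag}(b_1,b_2,b_5,b_6)$ is indeed diagonal (no two leaves of the {\tt H}-graph are adjacent), the Schur complement $S=M_{CC}-M_{CL}M_{LL}^{-1}M_{LC}$ satisfies $S^{-1}=(M^{-1})_{CC}=A$, the identity $\b{r}^TM^{-1}\b{r}=\b{r}_L^TM_{LL}^{-1}\b{r}_L+(\b{r}_C-\b{u})^TA(\b{r}_C-\b{u})$ with $\b{u}=M_{CL}M_{LL}^{-1}\b{r}_L$ is the standard one, and your evaluations $\b{r}_L^TM_{LL}^{-1}\b{r}_L=2c$ (from $\varepsilon_j^2=1$) and $\b{r}_C-\b{u}=(2n_1+2\alpha_1,\,2n_2+2\alpha_2)^T$ are exactly right, so halving gives the lemma. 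What your route buys: it proves the identity uniformly in $b_1,\dots,b_6$ (only invertibility of $M$ and $M_{LL}$ is used, not unimodularity or positivity), it makes the paper's remark that $c$ is independent of the $\varepsilon_j$'s transparent rather than an observed coincidence, and it would extend verbatim to other trees with leaves hanging off high-valency nodes, which is the setting of Conjecture \ref{conj:plumb}; the Maple verification buys only brevity. One small point to keep straight: the explicit entries of $A$ displayed before the lemma hold only under $\det M=1$ (they are cofactors divided by the determinant), so your aside that the determinant factorization ``collapses to the explicit entries of $A$'' is the one place where unimodularity enters --- the lemma itself, with $A$ defined as the central block of $M^{-1}$, needs no such hypothesis, and neither does your proof.
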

\begin{remark}
Importantly, note that $c$ is independent of the $\varepsilon_j$'s.
\end{remark}

We can now evaluate $Z(q)$ for any positive unimodular $M$.
\begin{proposition}\label{prop:Zq}
With $\mathcal{S} := \left\{ \b{\alpha}(\varepsilon) \right\}$, we have
\begin{align} \label{qqq}
Z(q)= \frac{q^{-9+\frac{{\rm tr}(M)}{2} + c}}{4} \sum_{\b \alpha \in \mathcal S } (-1)^{j+1} \sum_{\b n\in \mathbb{Z}^2} {\rm sgn}^*(n_1) {\rm sgn}^*(n_2) q^{Q_1(\b n +\b \alpha)},
\end{align}
where $Q_1(\b n):=\frac{1}{2} {\b m}^T M^{-1}{\b m}$, with ${\b m}:=(0,0,2n_1, 2n_2,0,0)^T$.
\end{proposition}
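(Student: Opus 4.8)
The plan is to evaluate the six-fold principal-value integral in \eqref{E:ZqDef} directly, by integrating one variable at a time, exploiting that the rational prefactor factors completely over the vertices of the {\tt H}-graph. The vertices $1,2,5,6$ have degree one and the vertices $3,4$ have degree three, so collecting the powers of $(w_j-w_j^{-1})$ coming from $g(w_j)$ in \eqref{g} and from the edge factors $f(w_k,w_\ell)$ in \eqref{f}, each vertex $j$ carries the net power $2-\deg(j)$, giving
\begin{equation*}
\prod_{j=1}^6 g(w_j)\prod_{(k,\ell)\in E} f(w_k,w_\ell)=\frac{\big(w_1-w_1^{-1}\big)\big(w_2-w_2^{-1}\big)\big(w_5-w_5^{-1}\big)\big(w_6-w_6^{-1}\big)}{\big(w_3-w_3^{-1}\big)\big(w_4-w_4^{-1}\big)}.
\end{equation*}
Writing $\Theta_M$ in the form \eqref{E:Theta2} and using that $M$ is unimodular, so $M\Z^6=\Z^6$, I may sum over $\b m\in\Z^6$ with completely free exponents $\b w^{\b m}=\prod_j w_j^{m_j}$. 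For $|q|<1$ the theta series converges absolutely and uniformly on the torus $|w_j|=1$, which lets me interchange the summation with the integration.

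After the interchange the integral factors as a product of six one-dimensional integrals. For the four leaf variables $j\in\{1,2,5,6\}$ I apply \eqref{int1}, which forces $m_j=\varepsilon_j\in\{\pm1\}$ and contributes the factor $2\pi i(\delta_{m_j,-1}-\delta_{m_j,1})=2\pi i(-\varepsilon_j)$. For the two central variables $j\in\{3,4\}$ I apply \eqref{int2}, which requires $m_j$ odd and contributes $2\pi i\cdot\tfrac12\sgno(m_j)=\pi i\,\sgn(m_j)$. Since $(2\pi i)^4(\pi i)^2=\tfrac14(2\pi i)^6$, dividing by the prefactor $(2\pi i)^6$ of \eqref{E:ZqDef} leaves the global constant $\tfrac14$ together with the sign $\varepsilon_1\varepsilon_2\varepsilon_5\varepsilon_6\,\sgn(m_3)\sgn(m_4)$.

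Writing $m_3=2n_1+1$ and $m_4=2n_2+1$ with $n_1,n_2\in\Z$, the surviving vector is $\b m=(\varepsilon_1,\varepsilon_2,2n_1+1,2n_2+1,\varepsilon_5,\varepsilon_6)^T$, which is exactly the vector $\b r$ of Lemma \ref{L:M=A+c}. That lemma gives $\tfrac12\b m^TM^{-1}\b m=Q_1(\b n+\b\alpha)+c$ with $\b\alpha=\b\alpha(\varepsilon)\in\mathcal S$, and since $c$ is independent of $\varepsilon$ the factor $q^c$ pulls out; combined with the prefactor $q^{(-18+{\rm tr}(M))/2}=q^{-9+{\rm tr}(M)/2}$ this yields the exponent in \eqref{qqq}. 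I then use that $\sgn(2n+1)={\rm sgn}^*(n)$ for all $n\in\Z$ to replace $\sgn(m_3)\sgn(m_4)$ by ${\rm sgn}^*(n_1){\rm sgn}^*(n_2)$, and I record the leaf sign $\varepsilon_1\varepsilon_2\varepsilon_5\varepsilon_6$ as the value attached to $\b\alpha(\varepsilon)$; since (after the reductions $b_1<b_2$, $b_5<b_6$) the map $\varepsilon\mapsto\b\alpha(\varepsilon)$ is injective, this sign is a well-defined function of $\b\alpha$, equal to $(-1)^{j+1}$ under the partition $\mathcal S=\mathcal S_1\sqcup\mathcal S_2$ used when matching with the family of Section \ref{sec:family}. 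Assembling these pieces gives \eqref{qqq}.

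The individual steps are routine; the points needing genuine care are the rigorous justification of interchanging the infinite sum with the principal-value integral, since the factors $1/(w_3-w_3^{-1})$ and $1/(w_4-w_4^{-1})$ have poles on the contour at $w=\pm1$ and the naive termwise splitting must be shown to be compatible with the PV prescription in \eqref{int2}, and the clean bookkeeping of the signs, where one must verify that the product of the four leaf signs $-\varepsilon_j$ reproduces the combinatorial sign $(-1)^{j+1}$ of the set $\mathcal S_j$ containing $\b\alpha(\varepsilon)$.
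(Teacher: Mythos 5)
Your proposal is correct and follows essentially the same route as the paper's proof: expand $\Theta_M$ over $\mathbb{Z}^6$ via \eqref{E:Theta2} and unimodularity, evaluate the six integrals termwise with \eqref{int1} and \eqref{int2} to isolate $\b{m}=(\varepsilon_1,\varepsilon_2,2n_1+1,2n_2+1,\varepsilon_5,\varepsilon_6)^T$ with sign $\varepsilon_1\varepsilon_2\varepsilon_5\varepsilon_6\,{\rm sgn}^*(n_1){\rm sgn}^*(n_2)$, and conclude with Lemma \ref{L:M=A+c}. Your additional care about interchanging the sum with the principal-value integral and about matching $\varepsilon_1\varepsilon_2\varepsilon_5\varepsilon_6$ with $(-1)^{j+1}$ fills in details the paper leaves implicit, and both points check out.
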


\begin{proof}
	 An application of formula (\ref{E:ZqDef}) for the ${\tt H}$-graph gives
	\begin{equation*}
	Z(q):= \frac{q^{ -9 +\frac{{\rm tr}(M)}{2}}}{(2\pi i)^6} \text{PV} \int_{|w_j|=1} \frac{\left(w_1-w_1^{-1}\right)\left(w_2-w_2^{-1}\right)\left(w_5-w_5^{-1}\right)\left(w_6-w_6^{-1}\right)}{\left(w_3-w_3^{-1}\right)\left(w_4-w_4^{-1}\right)} \Theta_M(q;\b w)
	\prod_{j=1}^6 \frac{dw_j}{w_j},
	\end{equation*}
	where by \eqref{E:Theta2} (because $M$ is unimodular) we have
	\begin{equation}	
	\Theta_M(q;\b{w}) =\sum_{\b{m}\in  \mathbb Z^6}q^{\frac12 \b m^T M^{-1}\b m}e^{2\pi i\b m^T\b z}.
	\end{equation}
Applying \eqref{int1} and \eqref{int2} we find that
\begin{equation*}
Z(q) = \frac{q^{ -9 +\frac{{\rm tr}(M)}{2}}}{4} \sum_{\b{r} = \left(\varepsilon_1, \varepsilon_2, 2n_1, 2n_2, \varepsilon_5, \varepsilon_6\right)^T \atop \varepsilon_j \in \{ \pm 1 \}, (n_1,n_2) \in \mathbb{Z}^2} (\varepsilon_1 \varepsilon_2 \varepsilon_5 \varepsilon_6) {\rm sgn}^*(n_1) {\rm sgn}^*(n_2) q^{\frac12 \b{r}^T M^{-1} \b{r}}.
\end{equation*}
Applying Lemma \ref{L:M=A+c} completes the proof.	
\end{proof}

\noindent We are now ready to prove Theorem \ref{T:mainquantum}.
\begin{proof}[Proof\nopunct]{\em of Theorem \ref{T:mainquantum}.}
By splitting the summation over $\mathbb{Z}^2$ in (\ref{qqq}) into summations over $\N_0 \times \N_0$,  $(-\N) \times (-\N)$,
$\N_0 \times (-\N)$, and  $(-\N) \times \N_0$, a case-by-case computation for each unimodular matrix (\ref{E:Mb}) gives
\begin{equation*}
\sum_{\b \alpha \in \mathcal S } (-1)^{j+1} \sum_{\b m\in \mathbb{Z}^2} {\rm sgn}^*(m_1) {\rm sgn}^*(m_2) q^{Q(\b m +\b \alpha)}= \mathcal{Z}_1(q)-\mathcal{Z}_2(q)
\end{equation*}
where
$$\mathcal{Z}_1(q):=\mathcal{Z}_{Q, \b r, \b s}(q)=\sum_{j \in \{1,2\}} (-1)^{j+1} \sum_{\b \alpha \in \mathcal{S}_j}\sum_{\b n \in \N_0^2} q^{L Q(\b n +\b \alpha)}$$
$$\mathcal{Z}_2(q):=\mathcal{Z}_{Q^*, \b r, \b s}(q)=\sum_{j \in \{1,2 \}} (-1)^{j+1} \sum_{\b \alpha \in \mathcal{S}_j}\sum_{\b n \in \N_0^2} q^{L Q^*(\b n +\b \alpha)}$$
and $Q^*(\b n):=Q(-n_1,n_2)$.

The quadratic form $Q$ and constants $N_1,N_2,r_1,r_2,s_1,s_2$ (recall, $L={\rm gcd}(N_1,N_2)$) are given in the appendix.
In Section 4, Theorem \ref{main-thm} establishes that $\mathcal{Z}_1(q)$ is a quantum modular form of weight one and depth two on $\mathbb{Q}$.
The same result also applies to $\mathcal{Z}_2(q)$. Finally, we let $c_M:=9-\frac{1}{2}{\rm tr}(M)- c$, where $c$ is also listed in the appendix.
\end{proof}

\setcounter{secnumdepth}{0}
\section{Appendix: Data for positive unimodular matrices}
\label{A:uni}

Here we list all positive unimodular matrices of the form \eqref{E:Mb}, and the corresponding parameters that appear in $Z(q)$ (see \eqref{E:S1S2def} and Proposition \ref{prop:Zq}). In each case one can directly check that the assumptions in Section \ref{sec:family} are satisfied.

The value of $c$ and the quadratic form $Q$ are given below, and the data for $\mathcal{S}_j$ are presented in condensed form.

{\small

\noindent{\bf 1.} $M(2,3,7,1,2,3)$

\smallskip

\noindent  $Q(\b n)= n_1^2 + 12 n_1 n_2  + 37 n_2^2$, $c=\frac56$, $N_1=N_2=12$, $r_1=r_2=1$, $s_1=s_2=5$.

\vskip 3mm

\noindent{\bf 2.} $M(2,7,4,1,5,2)$

\smallskip

\noindent
$Q(\b n)=21 n_1^2 + 140 n_1 n_2 + 235 n_2^2$, $c=\frac{47}{70}$, $N_1=28$, $N_2=20$, $r_1=5$, $s_1=9$, $r_2=3$, $s_2=7$.

\vskip 3mm

\noindent {\bf 3.}  $M(6,31,3,1,2,7)$

\smallskip

\noindent  $Q(\b n)=465 n_1^2 + 2604 n_1 n_2 + 3647 n_2^2$, $c=\frac{274}{651}$, $N_1=372$, $N_2=28$, $r_1=149$, $s_1=161$, $r_2=5$, $s_2=23$.

\vskip 3mm

\noindent {\bf 4.}  $M(7,18,3,1,2,7)$

\smallskip

\noindent
$Q(\b n)=45 n_1^2 + 252 n_1 n_2 + 353 n_2^2$, $c=\tfrac{53}{126}$, $N_1=252$, $N_2=28$, $r_1=101$, $s_1=115$, $r_2=5$, $s_2=9$.

\vskip 3mm

\noindent {\bf 5.} $M(3,11,3,1,2,9)$

\smallskip

\noindent  $Q(\b n)=77 n_1^2 + 396 n_1 n_2 + 510 n_2^2$, $c=\tfrac{205}{396}$, $N_1=66$, $N_2=36$, $r_1=19$, $s_1=25$, $r_2=7$, $s_2=11$.
\vskip 3mm

\noindent {\bf 6.} $M(2,19,3,1,2,11)$

\smallskip

\noindent
$Q(\b n)=171 n_1^2 + 836 n_1 n_2 + 1023 n_2^2$, $c=\frac{239}{418}$, $N_1=76$, $N_2=44$, $r_1=17$, $s_1=21$, $r_2=9$, $s_2=13$.

\vskip 3mm

\noindent {\bf 7.} $M(2,3,3,1,2,27)$

\smallskip

\noindent  $Q(\b n)=25 n_1^2 + 108 n_1 n_2 + 117 n_2^2$, $c=\frac{37}{54}$, $N_1=12$, $N_2=108$, $r_1=1$, $s_1=5$, $r_2=25$, $s_2=29$.

\vskip 3mm

\noindent {\bf 8.}  $M(2,3,3,1,3,5)$

\smallskip

\noindent  $Q(\b n)=14 n_1^2  + 60 n_1 n_2 + 65 n_2^2$, $c=\frac{41}{60}$, $N_1=12$, $N_2=30$, $r_1=1$, $s_1=5$, $r_2=7$, $s_2=13$.

\vskip 3mm

\noindent {\bf 9.} $M(2,11,3,1,3,4)$

\smallskip

\noindent  $Q(\b n)=55 n_1^2 + 264 n_1 n_2 + 318 n_2^2$, $c=\frac{155}{264}$, $N_1=44$, $N_2=24$, $r_1=9$, $s_1=13$, $r_2=5$, $s_2=11$.

\vskip 3mm

\noindent {\bf 10.} $M(3,4,3,1,3,4)$

\smallskip

\noindent
$Q(\b n)=5 n_1^2 +  24 n_1 n_2 + 29 n_2^2$, $c=\tfrac{155}{264}$, $N_1=N_2=24$, $r_1=5$, $s_1=11$, $r_2=5$, $s_2=11$.

\vskip 3mm

\noindent {\bf 11.} $M(3,7,2,1,3,97)$

\smallskip

\noindent  $Q(\b n)=1337 n_1^2+4074 n_1 n_2+3104 n_2^2$, $c=\frac{835}{2037}$, $N_1=42$, $N_2=582$, $r_1=11$, $s_1=17$, $r_2=191$, $s_2=197$.

\vskip 3mm

\noindent {\bf 12.} $M(3,8,2,1,3,56)$

\smallskip

\noindent  $Q(\b n)=109 n_1^2+ 336 n_1 n_2+ 259 n_2^2$, $c=\tfrac{17}{42}$, $N_1=48$, $N_2=336$, $r_1=13$, $s_1=19$, $r_2=109$, $s_2=115$.

\vskip 3mm

\noindent {\bf 13.} $M(3,47,2,1,3,17)$

\smallskip

\noindent
 $Q(\b n)=1457 n_1^2 + 4794 n_1 n_2 + 3944 n_2^2$, $c=\frac{895}{2397}$, $N_1=282$, $N_2=102$, $r_1=91$, $s_1=97$, $r_2=31$, $s_2=37$.

\vskip 3mm

\noindent {\bf 14.} $M(3,88,2,1,3,16)$

\smallskip

\noindent  $Q(\b n)=319 n_1^2 + 1056 n_1 n_2 + 874 n_2^2$, $c=\frac{391}{1056}$, $N_1=528$, $N_2=96$, $r_1=173$, $s_1=179$, $r_2=29$, $s_2=35$.

\vskip 3mm

\noindent {\bf 15.}$M(4,5,2,1,3,47)$

\smallskip

\noindent
$Q(\b n)=1820 n_1^2 + 5640 n_1 n_2 + 4371 n_2^2$, $c=\frac{2263}{5640}$, $N_1=40$, $N_2=282$, $r_1=11$, $s_1=19$, $r_2=91$, $s_2=97$.

\vskip 3mm

\noindent {\bf 16.} $M(4, 77, 2, 1, 3, 11)$

\smallskip

\noindent  $Q(\b n)=532 n_1^2 + 1848 n_1 n_2 + 1605 n_2^2 ,c=\frac{635}{1848}$, $N_1=616$, $N_2=66$, $r_1=227$, $s_1=235$, $r_2=19$, $s_2=25$.

\vskip 3mm

\noindent {\bf 17.} $M(5, 16, 2, 1, 3, 11)$

\smallskip

\noindent  $Q(\b n)=1520 n_1^2 + 5280 n_1 n_2 + 4587 n_2^2,c=\frac{1813}{5280}$, $N_1=160$, $N_2=66$, $r_1=59$, $s_1=69$, $r_2=19$, $s_2=25$.

\vskip 3mm

\noindent {\bf 18.}  $M(7, 92, 2, 1, 3, 8)$

\smallskip

\noindent   $Q(\b n)=2093 n_1^2 +7728 n_1 n_2 + 7134
 n_2^2,c=\frac{2365}{7728}$, $N_1=1288$, $N_2=48$, $r_1=545$, $s_1=559$, $r_2=13$, $s_2=19$.

\vskip 3mm

\noindent {\bf 19.}  $M(8, 35, 2, 1, 3, 8)$

\smallskip

\noindent  $Q(\b n)=455 n_1^2 + 1680 n_1 n_2 + 1551  n_2^2),c=\frac{257}{840}$, $N_1=560$, $N_2=48$, $r_1=237$, $s_1=253$, $r_2=13$, $s_2=19$.

\vskip 3mm

\noindent {\bf 20.}  $M(11, 16, 2, 1, 3, 8)$

\smallskip

\noindent  $Q(\b n)=286 n_1^2 + 1056 n_1 n_2 + 975 n_2^2,c=\frac{323}{1056}$, $N_1=352$, $N_2=48$, $r_1=149$, $s_1=171$, $r_2=13$, $s_2=19$.

\vskip 3mm

\noindent {\bf 21.}  $M(12, 133, 2, 1, 3, 7)$

\smallskip

\noindent  $Q(\b n)=836 n_1^2 + 3192 n_1 n_2 + 3047 n_2^2,c=\frac{905}{3192}$, $N_1=3192$, $N_2=42$, $r_1=1451$, $s_1=1475$, $r_2=11$, $s_2=17$.

\vskip 3mm

\noindent {\bf 22.} $M(13, 72, 2, 1, 3, 7)$

\smallskip

\noindent $Q(\b n)=3432 n_1^2 + 13104 n_1 n_2 + 12509n_2^2,c=\frac{3715}{13104},N_1=1872,N_2=42,r_1=851, s_1=877, r_2=11, s_2=17$.

\vskip 3mm

\noindent {\bf 23.} $M(3, 4, 2, 1, 4, 23)$

\smallskip

\noindent  $Q(\b n)=195 n_1^2 + 552 n_1 n_2 + 391 n_2^2$, $c=\frac{121}{276}$, $N_1=24$, $N_2=184$, $r_1=5$, $s_1=11$, $r_2=65$, $s_2=73$.

\vskip 3mm

\noindent {\bf 24.} $M(3, 10, 2, 1, 4, 9)$

\smallskip

\noindent  $Q(\b n)=115 n_1^2 + 360 n_1 n_2 + 282 n_2^2,c=\frac{143}{360}$, $N_1=60$, $N_2=72$, $r_1=17$, $s_1=23$, $r_2=23$, $s_2=31$.

\vskip 3mm

\noindent {\bf 25.}  $M(3, 52, 2, 1, 4, 7)$

\smallskip

\noindent  $Q(\b n)=663 n_1^2 + 2184 n_1 n_2 + 1799 n_2^2 ,c=\frac{407}{1092}$, $N_1=312$, $N_2=56$, $r_1=101$, $s_1=107$, $r_2=17$, $s_2=25$.

\vskip 3mm

\noindent {\bf 26.}  $M(6, 67, 2, 1, 4, 5)$

\smallskip

\noindent  $
Q_1(\b{n}) = 2211n_1^2 + 8040n_1 n_2 + 7310n_2^2$, $c=\tfrac{2539}{8040}$, $N_1=804$, $N_2=40$, $r_1=329$, $s_1=341$, $r_2=11$, $s_2=19$.

\vskip 3mm

\noindent {\bf 27.} $M(2, 7, 2, 1, 4, 77)$

\smallskip
\noindent  $Q(\b n)=227 n_1^2 +616 n_1 n_2 + 418 n_2^2),c=\frac{279}{616}$, $N_1=28$, $N_2=616$, $r_1=5$, $s_1=9$, $r_2=227$, $s_2=235$.

\vskip 3mm

\noindent {\bf 28.}  $M(7, 26, 2, 1, 4, 5)$

\vskip 3mm

\noindent  $Q(\b n)=1001 n_1^2 + 3640 n_1 n_2 + 3310 n_2^2 ,c=\frac{1149}{3640}$, $N_1=364$, $N_2=40$, $r_1=149$, $s_1=163$, $r_2=11$, $s_2=19$.

\vskip 3mm

\noindent {\bf 29.} $M(2, 11, 2, 1, 4, 25)$

\smallskip

\noindent  $Q(\b n)=781 n_1^2 + 2200 n_1 n_2 + 1550  n_2^2 ,c=\frac{969}{2200}$, $N_1=44$, $N_2=200$, $r_1=9$, $s_1=13$, $r_2=71$, $s_2=79$.
\vskip 3mm

\noindent {\bf 30.}  $M(2, 19, 2, 1, 4, 17)$
\smallskip

\noindent  $Q(\b n)=893 n_1^2 + 2584 n_1 n_2 + 1870 n_2^2 ,c=\frac{1113}{2584}$, $N_1=76$, $N_2=136$, $r_1=17$, $s_1=21$, $r_2=47$, $s_2=55$.

\vskip 3mm

\noindent {\bf 31.} $M(2, 71, 2, 1, 4, 13)$

\vskip 3mm
\noindent  $Q(\b n)=2485 n_1^2 + 7384 n_1 n_2 + 5486 n_2^2 ,c=\frac{3105}{7384}$, $N_1=284$, $N_2=104$, $r_1=69$, $s_1=73$, $r_2=35$, $s_2=43$.
\vskip 3mm

\noindent {\bf 32.}  $M(3, 7, 2, 1, 5, 7)$

\smallskip

\noindent  $Q(\b n)=69 n_1^2 + 210 n_1 n_2 + 160 n_2^2 ,c=\frac{43}{105}$, $N_1=42$, $N_2=70$, $r_1=11$, $s_1=17$, $r_2=23$, $s_2=33$.

\vskip 3mm

\noindent {\bf 33.} $M(2, 5, 2, 1, 5, 33)$

\smallskip

\noindent  $Q(\b n)=254 n_1^2 + 660 n_1 n_2 + 429 n_2^2,c=\frac{307}{660}$,  $N_1=20$, $N_2=330$, $r_1=3$, $s_1=7$, $r_2=127$, $s_2=137$.

\vskip 3mm

\noindent {\bf 34.}  $M(2, 7, 2, 1, 5, 16)$

\smallskip

\noindent  $Q(\b n)=413 n_1^2 + 1120 n_1 n_2 + 760 n_2^2 ,c=\frac{507}{1120}$,  $N_1=28$, $N_2=160$, $r_1=5$, $s_1=9$, $r_2=59$, $s_2=69$.

\vskip 3mm

\noindent {\bf 35.}$M(2, 21, 2, 1, 5, 9)$

\smallskip

\noindent  $Q(\b n)=434 n_1^2 + 1260 n_1 n_2 + 915 n_2^2 ,c=\frac{541}{1260}$,  $N_1=84$, $N_2=90$, $r_1=19$, $s_1=23$, $r_2=31$, $s_2=41$.

\vskip 3mm

\noindent {\bf 36.}  $M(2, 55, 2, 1, 5, 8)$

\smallskip

\noindent  $Q(\b n)=297 n_1^2 + 880 n_1 n_2 + 652  n_2^2 ,c=\frac{371}{880}$,  $N_1=220$, $N_2=80$, $r_1=53$, $s_1=57$, $r_2=27$, $s_2=37$.

\vskip 3mm

\noindent {\bf 37.} $M(2, 3, 2, 1, 8, 57)$

\smallskip

\noindent  $Q(\b n)= 391 n_1^2 + 912 n_1 n_2 + 532 n_2^2 ,c=\frac{445}{912}$, $N_1=12$, $N_2=912$, $r_1=1$, $s_1=5$, $r_2=391$, $s_2=407$.

\vskip 3mm

\noindent {\bf 38.} $M(2, 3, 2, 1, 9, 32)$

\smallskip

\noindent  $Q(\b n)= 247 n_1^2 + 576 n_1 n_2 + 336 n_2^2 ,c=\frac{281}{576}$,  $N_1=12$, $N_2=576$, $r_1=1$, $s_1=5$, $r_2=247$, $s_2=265$.
\vskip 3mm

\noindent {\bf 39.}  $M(2, 3, 2, 1, 12, 17)$

\smallskip

\noindent  $Q(\b n)=175 n_1^2 + 408 n_1 n_2 + 238 n_2^2 ,c=\frac{199}{408}$,  $N_1=12$, $N_2=408$, $r_1=1$, $s_1=5$, $r_2=175$, $s_2=199$.

}

\end{document}